\documentclass{amsart}

\usepackage{amssymb,amscd,MnSymbol}
\usepackage{stmaryrd}
\usepackage{mathrsfs}
\usepackage{graphicx,psfrag}
\usepackage{color}

\usepackage{color}

\usepackage{tikz-cd, tikz}
\usetikzlibrary{patterns, matrix,shapes,arrows,calc,topaths,intersections,positioning,decorations.pathreplacing,decorations.pathmorphing,fit}

\usepackage[normalem]{ulem}
\newcommand{\congto}{\xrightarrow{\sim}}

\newcommand{\wt}{\widetilde}
\newcommand{\wb}{\overline}
\newcommand{\pa}{\partial}
\newcommand{\ot}{\otimes}
\newcommand{\C}{\mathbb{C}}
\newcommand{\Z}{\mathbb{Z}}
\newcommand{\CS}{{\C^*}}
\newcommand{\R}{\mathbb{R}}
\newcommand{\RM}{\backslash}
\newcommand{\hcal}{\mathcal{H}}
\newcommand{\beq}{\begin{equation} }
\newcommand{\eeq}{\end{equation} }
\newcommand{\bl}{\begin{lemma}}
\newcommand{\el}{\end{lemma}}
\newcommand{\bpp}{\begin{proposition}}
\newcommand{\epp}{\end{proposition}}
\newcommand{\bpf}{\begin{proof}}
\newcommand{\epf}{\end{proof}}
\newcommand{\bcr}{\begin{corollary}}
\newcommand{\ecr}{\end{corollary}}
\newcommand{\bd}{\begin{definition}}
\newcommand{\ed}{\end{definition}}
\newcommand{\bnum}{\begin{enumerate}}
\newcommand{\enum}{\end{enumerate}}

\DeclareMathOperator{\Log}{Log}
\DeclareMathOperator{\Skel}{Skel}

\newcommand{\la}{\langle}
\newcommand{\ra}{\rangle}
\newcommand{\In}{\subset}
\renewcommand{\ss}{\subsection}
\newcommand{\sss}{\subsubsection}

\newcommand{\bcs}{\begin{cases}}
\newcommand{\ecs}{\end{cases}}
\newcommand{\brem}{\begin{remark}}
\newcommand{\erem}{\end{remark}}

\newcommand{\ccal}{\cC}
\newcommand{\wcal}{\cW}
\newcommand{\La}{\Lambda}
\newcommand{\linf}{\Lambda^\infty}

\usepackage[initials]{amsrefs}


\newcommand{\si}{\sigma}
\newcommand{\Si}{\Sigma}


\newcommand{\bC}{\mathbb{C}}

\newcommand{\bF}{\mathbb{F}}
\newcommand{\bH}{{\mathbb{H}}}
\newcommand{\bK}{\mathbb{K}}
\newcommand{\bL}{\mathbb{L}}
\newcommand{\bP}{\mathbb{P}}
\newcommand{\bQ}{\mathbb{Q}}
\newcommand{\bR}{\mathbb{R}}
\newcommand{\bT}{\mathbb{T}}
\newcommand{\bZ}{\mathbb{Z}}

\newcommand{\boldA}{{\boldsymbol{A}}}

\newcommand{\bPsi}{{\boldsymbol{\Psi}}}

\newcommand{\bi}{{\bold{i}}}


\newcommand{\cA}{\mathcal{A}}

\newcommand{\cC}{\mathcal{C}}
\newcommand{\cD}{\mathcal{D}}

\newcommand{\cF}{\mathcal{F}}

\newcommand{\cM}{\mathcal{M}}
\newcommand{\cO}{\mathcal{O}}

\newcommand{\cU}{\mathcal{U}}

\newcommand{\cX}{\mathcal{X}}

\newcommand{\cW}{\mathcal{W}}
\newcommand{\cWL}{{\cW_\Lambda(T^*M_T)}}
\newcommand{\cWLo}{{\cWL^{\mathrm{op}}}}
\newcommand{\cWtL}{{\cW_{\tilde \Lambda}}}

\newcommand{\cWFS}{{\cW_{\mathrm{FS}}(T^*M_T;W_q)}}

\newcommand{\cY}{\mathcal{Y}}
\newcommand{\cZ}{\mathcal{Z}}


\newcommand{\fp}{\mathfrak{p}}


\newcommand{\ch}{\mathrm{ch}}
\newcommand{\Ch}{{\mathrm{Ch}}}
\newcommand{\crv}{{\mathsf{cr}}}

\newcommand{\Hom}{\mathrm{Hom}}
\newcommand{\Coh}{\mathrm{Coh}}

\newcommand{\eff}{{\mathrm{eff}}}

\newcommand{\ev}{\mathrm{ev}}

\newcommand{\Pic}{ {\mathrm{Pic}} }

\newcommand{\Perf}{{\mathrm{Perf}}}
\newcommand{\Ker}{\mathrm{Ker}}

\newcommand{\Sh}{\mathrm{Sh}}
\newcommand{\ShL}{{\mathrm{Sh}_\Lambda(M_T)}}
\newcommand{\ShLc}{{\mathrm{Sh}^w_\Lambda(M_T)}}
\newcommand{\ShtL}{{\mathrm{Sh}_{\tilde \Lambda}(M_\bR)}}
\newcommand{\ShtLc}{\mathrm{Sh}^w_{\tilde \Lambda}(M_\bR)}
\newcommand{\Spec}{\mathrm{Spec}}

\newcommand{\vir}{{\mathrm{vir}}}
\newcommand{\CC}{\mathrm{CC}}

\newcommand{\Nef}{ {\mathrm{Nef}} }
\newcommand{\NE}{ {\mathrm{NE}} }

\newcommand{\Jac}{{\mathrm{Jac}}}
\newcommand{\Hess}{{\mathrm{Hess}}}
\newcommand{\mir}{{\mathsf{mir}}}
\newcommand{\Mir}{{\mathsf{Mir}}}

\renewcommand{\Re}{{\mathrm{Re}}}
\renewcommand{\Im}{{\mathrm{Im}}}


\newcommand{\btau}{{\boldsymbol\tau }}

\newcommand{\fr}{\mathfrak{r}}

\newcommand{\fs}{\mathfrak{s}}


\renewcommand{\sf}{\mathsf{f}}


\newcommand{\tF}{{\tilde{F}}}
\newcommand{\tbT}{ {\tilde{\bT}} }
\newcommand{\tcD}{ {\tilde{\mathcal{D}}}}

\newcommand{\tb}{{\tilde{b}}}

\newcommand{\tL}{{\tilde{L}}}
\newcommand{\tM}{{\tilde{M}}}
\newcommand{\tN}{{\tilde{N}}}

\newcommand{\tX}{\tilde{X}}

\newcommand{\tcY}{\tilde{\cY}}




\newcommand{\lra}{\longrightarrow}

\newcommand{\Ue}{{U_{\epsilon}}}
\newcommand{\Uec}{{U_{\epsilon}^\circ}}
\newcommand{\tUe}{{\tilde U_{\epsilon}}}
\newcommand{\tUec}{{\tilde U_{\epsilon}^\circ}}
\newcommand{\Uee}{{U_{\epsilon,\epsilon'}}}
\newcommand{\Ueec}{{U_{\epsilon,\epsilon'}^\circ}}
\newcommand{\tUee}{{\tilde U_{\epsilon,\epsilon'}}}
\newcommand{\tUeec}{{\tilde U_{\epsilon,\epsilon'}^\circ}}


\newtheorem*{Theorem*}{Theorem}
\newtheorem{dummy}{dummy}[section]
\newtheorem{lemma}[dummy]{Lemma}
\newtheorem{theorem}[dummy]{Theorem}
\newtheorem{conjecture}[dummy]{Conjecture}
\newtheorem{corollary}[dummy]{Corollary}
\newtheorem{proposition}[dummy]{Proposition}

\theoremstyle{definition}
\newtheorem{definition}[dummy]{Definition}
\newtheorem{remark}[dummy]{Remark}

\newtheorem{assumption}[dummy]{Assumption}

\begin{document}
\title[Gamma II from HMS]{Gamma II for toric varieties from integrals on T-dual branes and homological mirror symmetry}

\address{Bohan Fang, Beijing International Center for Mathematical
  Research, Peking University, 5 Yiheyuan Road, Beijing 100871, China}
\email{bohanfang@gmail.com}
\author{Bohan Fang}

\address{Peng Zhou, Institut des Hautes \'Etudes Scientifiques. Le Bois-Marie, 35 route de Chartres, 91440 Bures-sur-Yvette France}
\email{pzhou.math@gmail.com}
\author{Peng Zhou}

\begin{abstract}
In this paper we consider the oscillatory integrals on Lefschetz thimbles in the Landau-Ginzburg model as the mirror of a toric Fano manifold. We show these thimbles represent the same relative homology classes as the characteristic cycles of the corresponding constructible sheaves under the equivalence of \cite{GPS18-2}. Then the oscillatory integrals on such thimbles are the same as the integrals on the characteristic cycles and relate to genus $0$ Gromov-Witten descendant potential for $X$, and this leads to a proof of Gamma II conjecture for toric Fano manifolds.
\end{abstract}
\maketitle

\section{Introduction}

The mirror of a toric Fano variety $X$ is a Landau-Ginzburg model $W:(\bC^*)^{\dim X} \to \bC$ where \emph{superpotential} $W$ is a Laurent polynomial.

The \emph{closed} A-model of $X$, mathematically, is about its Gromov-Witten theory. By mirror symmetry it can be read from the \emph{closed} B-model on the mirror Landau-Ginzburg model, usually in the form of period integrals. In principle, genus $0$ Gromov-Witten descendant potential function of $X$ is equal to oscillatory integrals on its mirror \cites{Gi94}
\[
\int e^{-\frac{W}{z}}\frac{dX_1\dots dX_n}{X_1\dots X_n}.
\]

A natural question to ask is that the mirror effect on the Gromov-Witten side for the choice of cycles one integrates over. The answer is given by identification of the K-group $K(X)$ with relative homology cycles of $H_n((\bC^*)^n, \Re(W/z)\gg 0)$ in \cites{Iritani09}. By inserting certain Gamma-function related characteristic class of such $K$-group element in the Gromov-Witten correlator function, then one can show this is equal to the oscillatory integral over such relative cycle. Moreover in \cites{Fang16} this identification is shown to agree with homological mirror symmetry: a categorical identification of coherent sheaves on $X$ and a Fukaya-type category of Lagrangian cycles in $(\bC^*)^n$, which relates the \emph{open} {B-model} on $X$ to the \emph{open} A-model on its mirror Landau-Ginzburg model.

There are various consequences of such mirror symmetry. Since genus $0$ descendant correlator functions are solutions to quantum differential equations, one can investigate the properties of such solutions by analytic methods on the oscillatory integrals. A particularly interesting type of relative cycles for the Landau-Ginzburg model are Lefschetz thimbles. On one hand they correspond to a full exceptional collection in any reasonbly-defined Fukaya-type category associated to $((\bC^*)^n,W)$. On the other hand the oscillatory integrals over them have nice asymptotic properties. The Gamma conjectures \cites{GGI16} for Fano varieties, especially the Gamma II conjecture, are related to these features.

\subsection{Gamma conjectures for Fano varieties}

The Gamma conjecture \cites{GGI16} for Fano varieties is about its quantum cohomology and its certain characteristic classes. Let $X$ be a Fano variety and we define the limit of the $J$-function (assembled from some Gromov-Witten genus $0$ descendant invariants) on a ray with coordinate $t>0$ in its K\"ahler cone (under certain assumption, namely \emph{property $\mathcal O$})
\[
A_X=\lim_{t\to +\infty} \frac{J_X(t)}{\langle [\mathrm{pt}],J_X(t)\rangle} \in H^*(X).
\]
The \emph{Gamma I conjecture} says this class is the \emph{Gamma class} of its tangent bundle $\hat \Gamma_X$ (see Equation \eqref{eqn:Gamma}) \cites{Iritani09,KKP08}.

Genus $0$ Gromov-Witten theory defines the (small) quantum connection on the trivial $H^*(X)$ bundle over $H^2(X)\times \bC^*$  (Equation \eqref{eqn:qde-tau}), which can be solved asymptotically \cites{Du93,Gi01a} (Theorem \ref{thm:Dubrovin-Givental-decomposition}). These asymptotic solutions $y_1,\dots,y_\fs$ form a basis of soutions. On the other hand, any solution is described by (linearly combination of) certain genus $0$ Gromov-Witten correlator function $\cZ(E)$ for $E\in K(X)$, whose definition involves inserting $\boldA_E=\Ch(E)\cdot \hat\Gamma_X$, where $i=1,\dots,\fs=\dim H^*(X)$. The Gamma II conjecture says the following.
\begin{conjecture}[Gamma II, see Conjecture \ref{conj:gamma-ii} for its precise form, and Theorem \ref{thm:gamma-ii} for complete toric Fano manifolds]
   There exists a full exceptional collection $E_1,\dots, E_\fs\in \Coh(X)$
    such that
   \[
   \cZ([E_i])=y_i.
   \]
\end{conjecture}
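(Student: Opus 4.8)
The plan is to deduce the identity $\cZ([E_i])=y_i$ from a comparison of oscillatory integrals: first use mirror symmetry to turn the Gromov--Witten correlators into period integrals, then use the main result of this paper to replace characteristic cycles by Lefschetz thimbles, and finally match asymptotics with the Dubrovin--Givental basis.

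\textbf{From correlators to periods.} First I would recall that, by the toric mirror theorem \cites{Gi94} together with Iritani's identification of $K$-theoretic flat sections \cites{Iritani09}, for every $E\in K(X)$ the function $\cZ([E])$ equals the oscillatory integral $\int_{\mathbb L_E} e^{-W/z}\,\omega$ over the relative cycle $\mathbb L_E\in H_n\big((\bC^*)^n,\{\Re(W/z)\gg 0\}\big)$ mirror to $E$, where $\omega$ is the standard holomorphic volume form and the class $\boldA_E=\Ch(E)\cdot\hat\Gamma_X$ inserted in the correlator is exactly the one produced by pushing $\omega$ forward along the mirror torus fibration (this is where the Gamma class $\hat\Gamma_X$ of \eqref{eqn:Gamma}, rather than an arbitrary characteristic class, enters). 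By \cites{Fang16} this identification is compatible with homological mirror symmetry, so $\mathbb L_E$ is the class of the Lagrangian HMS-dual to $E$. Hence it suffices to produce a full exceptional collection $E_1,\dots,E_\fs\In\Coh(X)$ whose mirror Lagrangians are, in the right order, the Lefschetz thimbles of $W$.

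\textbf{Choosing the collection and identifying the cycles.} Fix a generic phase $\arg z=\theta$, so that the critical values $u_1,\dots,u_\fs$ of $W$ (which are the eigenvalues of $c_1(X)\star$) have pairwise distinct $\theta$-projections; this orders the corresponding Lefschetz thimbles $\Delta_1,\dots,\Delta_\fs$. Under the coherent-constructible correspondence $\Coh(X)\simeq\ShL$ followed by the equivalence $\ShL\simeq\cW$ of \cites{GPS18-2}, the thimbles correspond to an exceptional collection of coherent sheaves, automatically full (the thimbles split-generate the relevant category), and for a complete toric Fano manifold this collection genuinely consists of coherent sheaves; take it to be $E_1,\dots,E_\fs$. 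Now invoke the central result of this paper: the thimble $\Delta_i$ and the characteristic cycle $\CC(\cF_i)$ of the constructible sheaf $\cF_i\in\ShL$ dual to $E_i$ represent the same class in $H_n\big((\bC^*)^n,\{\Re(W/z)\gg 0\}\big)$. Combined with the previous step this gives $\cZ([E_i])=\int_{\Delta_i} e^{-W/z}\,\omega$.

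\textbf{Asymptotics and normalization.} Next I would analyze $\int_{\Delta_i} e^{-W/z}\,\omega$ as $z\to 0^+$ along $\arg z=\theta$ by the stationary-phase (Laplace) method: since $\Delta_i$ is the downward flow of $\Re(W/z)$ through the nondegenerate critical point with value $u_i$, the integral has the form $e^{-u_i/z}\,z^{n/2}\,(a_i+O(z))$ with $a_i\neq 0$. On the other hand $\cZ([E_i])$ is a flat section of the quantum connection \eqref{eqn:qde-tau}; a flat section with this exponentially-graded asymptotics is unique up to scalar within the generalized $u_i$-eigenspace, so $\cZ([E_i])$ is a multiple of the member $y_i$ of the Dubrovin--Givental asymptotic basis of Theorem \ref{thm:Dubrovin-Givental-decomposition}. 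Finally, comparing leading coefficients on both sides — using property $\cO$ for toric Fano varieties and the Gamma-class bookkeeping of the first step — forces the scalar to be $1$, giving $\cZ([E_i])=y_i$.

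\textbf{Where the difficulty lies.} I expect two points to be the real work. The first is the homological identification underlying the central result: turning the categorical equivalences of \cites{GPS18-2} and the coherent-constructible correspondence into the concrete statement that the geometric Lefschetz thimble and the Lagrangian characteristic cycle are homologous rel $\{\Re(W/z)\gg 0\}$, which requires controlling the behaviour of both cycles near the ends of $(\bC^*)^n$ and matching the stop defining the partially wrapped category with the $W$-direction. The second is the normalization in the last step: one must track the branches of $z^{-\mu}z^{\rho}$ and the square roots of the Hessians so that no spurious constant survives, and when the eigenvalues of $c_1(X)\star$ are not simple one must match the entire asymptotic series within each generalized eigenspace, not only the leading exponential term.
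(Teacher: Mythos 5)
Your outline follows the same route as the paper: make the Lefschetz thimbles into objects of the partially wrapped category, transport them through the GPS microlocalization (Theorem \ref{thm:GPS}) and CCC (Theorem \ref{thm:ccc}) to get the full exceptional collection $E_1,\dots,E_\fs$ (Proposition \ref{prop:exceptional}), use the paper's central homological statement that a thimble and the characteristic cycle of its mirror sheaf agree in $\bH_q$ (Corollary \ref{cor:cycle-thimble}, Theorem \ref{thm:central}), convert to Gromov--Witten correlators via Iritani/Fang, and compare stationary-phase asymptotics with the Dubrovin--Givental solutions of Theorem \ref{thm:Dubrovin-Givental-decomposition}. So the architecture is correct and is the paper's.

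The one step that does not go through as you wrote it is the normalization. You identify $\cZ([E_i])$ with $y_i$ only up to a scalar and then claim the scalar is $1$ ``using property $\cO$ \dots and the Gamma-class bookkeeping''; neither of these is the relevant input (property $\cO$ concerns Gamma I and the eigenvalue of largest real part, and the Gamma class is already absorbed in Theorem \ref{thm:iritani}). What actually fixes the constant is closed-string mirror symmetry for the pairing: the identifications $\sf_j(p_i)\vert_{z=0}=(H_j,\hat\phi_i)$ and $-\det\Hess_{p_i}(W_q)=1/(\phi_i,\phi_i)$ of Proposition \ref{prop:identification-mir}. The paper exploits this to bypass the ``unique up to scalar'' argument entirely: it forms the full $H^*(X)$-valued flat section from the thimble integrals with all insertions $\sf_j$ (Corollary \ref{cor:B-S-flat}, Proposition \ref{prop:any-primary-insertion}), and the stationary-phase expansion then produces the exact leading vector $e^{-u_i/z}\hat\phi_i$, matching $\bPsi R(z)e^{-U/z}$ on the nose. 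If you insist on using only the component paired with $1$, you additionally need the sector of width greater than $\pi$ and the nonvanishing of $(\hat\phi_i,1)$ for the uniqueness claim, and you still need the Hessian-versus-$(\phi_i,\phi_i)$ identity above to see the scalar is $1$ rather than merely nonzero. A second, smaller omission: your asymptotic analysis is done on straight rays in phase $\theta$, whereas the identity with $\cZ([E_i])$ (through characteristic cycles and $\bH_q$) lives at $z>0$ over the bent paths of Definition \ref{def:thimbles-cycle}; the paper bridges the two by rotating the vanishing paths and taking the analytic continuation $\bar y_i^\theta$, which is exactly what Conjecture \ref{conj:gamma-ii} asks to be matched, so this step should be made explicit rather than implicit.
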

These $A_i:=\boldA_{[E_i]}$ are called \emph{higher asymptotic classes}. The Gamma I conjecture has been proved for complex Grassmannians \cite{GGI16}, Fano 3-folds of Picard rank one \cite{GoZa16}, Fano complete intersections in projective spaces \cites{GaIr15, SaSh17, Ke18}, toric Fano manifolds that satisfy B-model analogue of Property $\cO$ \cites{GaIr15}, and del Pezzo surfaces \cites{HuKeLiYang19}.

Gamma II conjecture is an extension of Dubrovin's conjecture \cites{Dubrovin1998} and formulated in \cites{Dubrovin13, GGI16}. It is known for projective spaces \cites{GaIr15}. A $K$-theoretic version of the Gamma II conjecture is also shown for complete toric Fano manifolds \cites{GaIr15}.

We would like to remark that Gamma conjecture is motivated by mirror symmetry, and has a version more directly related to Strominger-Yau-Zaslow conjecture for Calabi-Yaus \cites{AGIS18}. In particular the Gamma class arises naturally in the B-model period integral computation \cites{HLY96,Ho06}.

\subsection{Oscillatory integrals and mirror symmetry}

In some situation, the Gamma conjecture can be mathematically proved by mirror symmetry (see \cite{GaIr15} for various cases). This paper considers a smooth toric Fano manifold $X$ with Landau-Ginzburg mirror $W:(\mathbb C^*)^n\to \bC$. In \cites{Iritani09}, it is shown that the oscillatory integrals over integral cycles in $H_n((\mathbb C^*)^n, \mathrm{Re}(W/z)\gg 0)$ are given by genus $0$ descendant potential with classes $\boldA_{[E]}$ inserted -- the lattice of such relative homology cycles is isomorphic to the $K$-group lattice of holomorphic vector bundles $E$ on $X$. In \cite{Fang16} such isomorphism is further shown to agree with homological mirror symmetry in the sense of \cites{FLTZ11, FLTZ12,Kuwagaki17,ZhouPeng17b,Vaintrob16}. Such HMS (or more precisely \emph{coherent-constructible correspondence}) say for any coherent sheaf $E$ on $X$ one can associates a constructible sheaf on $(S^1)^n$, and the category of coherent sheaves $\Coh(X)$ is derived equivalent to the category of such constructible sheaves on $(S^1)^n$, denoted by $\Sh^w_\Lambda((S^1)^n)$. Iritani's isomorphism of the $K$-group and the relative homology for the LG model is obtained by taking a coherent sheaf $E$'s corresponding constructible sheaf on $(S^1)^1$ and then taking its characteristic cycle, which is a Lagrangian cycle in $(\mathbb C^*)^n\cong T^* (S^1)^n$ and represents a class in $H_n((\mathbb C^*)^n,\mathrm{Re}(W/z)\gg 0)$.

In this paper we want to consider Lagrangian thimbles associated to the Landau-Ginzburg mirror $W:(\mathbb C^*)^n\to \bC$. Their images on $W$ are right-pointing rays, and thus represent classes in $H_n((\mathbb C^*)^n,\mathrm{Re}(W/z)\gg 0)$ (for $\mathrm{Re}(z)>0$). They are naturally objects in the Fukaya-Seidel category of the Landau-Ginzburg model \cite{Seidelbook}. We use the recently developed Ganatra-Pardon-Shende's wrapped Fukaya category \cite{GPS17,GPS18-1} $\cWFS$ instead of the original version \cite{Seidelbook} as the Fukaya-Seidel category of the Landau-Ginzburg model (see Section \ref{sec:HMS-Lag} for the notion). Then we have the following.
\begin{itemize}
  \item These thimbles form a full exceptional collection.
  \item By the recent result of \cite{GPS18-2}, they correspond to constructible sheaves on $(S^1)^n$.
  \item By \cite{ZhouPeng18, GammageShende17}, such constructible sheaves are in $\Sh^w_\Lambda((S^1)^n)$.
\end{itemize}
We further show that such thimbles represent the same relative homology classes as the characteristic cycles of their corresponding constructible sheaves.

Then we have a natural pathway to Gamma II conjecture: passing to constructible sheaves and then to coherent sheaves on $X$ we have a full exceptional collection $E_1,\dots, E_n$. On one hand we analyze the asymptotic behavior of the oscillatory integrals on these thimbles, which are precisely asymptotic solutions $y_i$. On the other hand they are integrals over characteristic cycles of corresponding constructible sheaves and thus are equal to $1$-point Gromov-Witten descendant potentials with $\boldA_{[E_i]}$ inserted, and thus $y_i=\mathcal Z([E_i])$.

\begin{remark}
  We show the Gamma II conjecture in a neighborhood of the large radius limit (complex parameter $|q|\ll 1$). Actually the validity of Gamma II at any semisimple point implies the rest (see the proof of Theorem 6.4 \cite{GaIr15}.)
\end{remark}

\subsection{Outline}

We recall the notion of a smooth toric Fano variety $X$ and its mirror in Section \ref{sec:mirror}. In particular we very carefully define the mirror $(\mathbb C^*)^n$ as a complex manifold (in Section \ref{sec:LG-B-model}) and as a symplectic manifold $T^*(S^1)^n$ (in Section \ref{sec:lg-a}). A key ingredient is the identification of both which is explained in Section \ref{sec:lg-a}. Then we define closed-sector theory in Section \ref{sec:thimbles} for both B-side (oscillatory integrals) and A-side (descendant Gromov-Witten invariants and quantum connection on $X$). In Section \ref{sec:HMS-sheaf} we recall the coherent-constructible correspondence in \cites{FLTZ11,Kuwagaki17,ZhouPeng17b,Vaintrob16} first and show the convergence of oscillatory integration on characteristic cycles. In Section \ref{sec:HMS-Lag} we show that in Ganatra-Pardon-Shende's wrapped Fukaya category \cites{GPS17,GPS18-1}, a Lefstchez thimble and or a ``standard'' Lagrangians represent the same relative homology classes as its corresponding constructible sheaf's (by \cites{GPS18-2}) characteristic cycle. Then in Section \ref{sec:gamma-ii} we show the Gamma II conjecture for a toric Fano manifold by looking at the oscillatory integral on Lefschetz thimbles, and its mirror version: genus $0$ descendant GW potential with asymptotic classes of the mirror coherent sheaves in an exceptional collection.

\subsection{Acknowledgements}

BF would like to thank Hiroshi Iritani for bringing this probem to attention. He is also grateful to the very helpful discussion with Chiu-Chu Melissa Liu, David Nadler, Vivek Shende and Eric Zaslow.

The work of BF is partially support by an NSFC grant 11831017. The work of PZ is supported by an IHES Simons Postdoctoral Fellowship as part of the Simons Collaboration on HMS.

\section{Mirror symmetry for toric manifolds}
\label{sec:mirror}

In this section, we fix the notion of toric manifolds and discuss their mirror Landau-Ginzburg A and B-models.

\subsection{Definition of a toric manifold}
Let $N\cong \bZ^n$ be a finitely generated free abelian group, and let $N_\bR=N\otimes_\bZ\bR$. We consider complete smooth toric manifolds given by a simplicial fan $\Si$ in $N_\bR$ such that the set of $1$-cones is $$
\{\rho_1,\dots,\rho_{\fr}\},
$$
where $\rho_i\cap N=\bZ_{\ge 0} b_i$, $i=1,\dots, \fr$. We require
\begin{itemize}
  \item $\Si$ is complete: $|\Si|=N_\bR$;
  \item $\Si$ is smooth: for every top dimensional cone $\si$, the lattice $\oplus_{b_i\in\si}\bZ b_i\cong N$.
\end{itemize}

There is a surjective group homomorphism
\begin{eqnarray*}
\phi: & \tN :=\oplus_{i=1}^r \bZ\tb_i & \lra  N,\\
        &  \tb_i  & \mapsto b_i.
\end{eqnarray*}
Define $\bL :=\Ker(\phi) \cong \bZ^\fp$ where $\fp:=\fr-n$. Then
we have the following short exact sequence of finitely generated abelian groups:
\begin{equation}\label{eqn:NtN}
0\to \bL  \stackrel{\psi }{\lra} \tN  \stackrel{\phi}{\lra} N\to 0.
\end{equation}
Applying $ - \otimes_\bZ \bC^*$ and $\Hom(-,\bZ)$ to \eqref{eqn:NtN}, we obtain two exact
sequences of abelian groups:
\begin{align}
\label{eqn:bT}
&1 \to G \to \tbT\to \bT \to 1,\\
&\label{eqn:MtM}
0 \to M \stackrel{\phi^\vee}{\to} \tM \stackrel{\psi^\vee}{\to} \bL^\vee \to 0,
\end{align}
where
\begin{align*}
&\bT= {N}\otimes_\bZ \bC^* \cong (\bC^*)^n,\  \tbT = \tN\otimes_\bZ \bC^* \cong (\bC^*)^\fr,\  G = \bL\otimes_\bZ \bC^* \cong (\bC^*)^\fp,\\
&M = \Hom(N,\bZ)  = \Hom(\bT,\bC^*), \
\tM = \Hom(\tN,\bZ)= \Hom(\tbT,\bC^*),\
\bL^\vee = \Hom(\bL,\bZ) =\Hom(G,\bC^*).
\end{align*}

The action of $\tbT$ on itself extends to a $\tbT$-action on $\bC^\fr = \Spec\bC[Z_1,\dots, Z_\fr]$.
The group $G$ acts on $\bC^\fr$ via the group homomorphism $G\to \tbT$ in \eqref{eqn:bT}.

Define the set of ``anti-cones''
$$
\cA=\{I\subset \{1,\dots, \fr\}: \text{$\sum_{i\notin I} \bR_{\ge 0} b_i$ is a cone of $\Si$}\}.
$$
Given $I\in \cA$, let $\bC^I$ be the subvariety of $\bC^\fr$ defined by the ideal in $\bC[Z_1,\ldots, Z_\fr]$ generated by $\{ Z_i \mid i\in I\}$.  Define the toric orbifold $X$ as the stack quotient
$$
X:=U_\cA/ G,
$$
where
$$
U_\cA:=\bC^\fr \backslash \bigcup_{I \in \mathcal A} \bC^I.
$$
The smooth compact variety $X$ contains the torus $\bT:= \tbT/G$ as a dense open subset, and
the $\tbT$-action on $\cU_\cA$ descends to a $\bT$-action on $X$.

Let $\tcD_i$ be the $\tbT$-divisor in $\bC^\fr$ defined by $Z_i=0$. Then
$\tcD_i \cap \cU_A$ descends to a $\bT$-divisor $\cD_i$ in $\cX$. We have
$$
\tM \cong \Pic_{\tbT}(\bC^\fr) \cong H^2_{\tbT}(\bC^\fr;\bZ),
$$
where the second isomorphism is given by the $\tbT$-equivariant
first Chern class $(c_1)_{\tbT}$.
 Define
\begin{gather*}
D_i=(c_1)_G(\cO_{\bC^\fr}(\tcD_i)) \in H^2_G(\bC^r;\bZ)\cong \bL^\vee
\end{gather*}
We have
$$
\Pic(X)\cong H^2(X;\bZ) \cong \bL^\vee.
$$

\subsection{The nef and Mori cone} \label{sec:nef-NE}
In this paragraph, $\bF=\bQ$, $\bR$, or $\bC$.
Given a finitely generated free abelian group $\Gamma \cong \bZ^m$, define
$\Gamma_\bF= \Lambda\otimes_\bZ \bF \cong \bF^m$.
We have the following short exact sequences of vector spaces ($\otimes_\bZ
\bF$ with Equation \eqref{eqn:NtN} and \eqref{eqn:MtM}):
\begin{eqnarray*}
&& 0\to \bL_\bF\to \tN_\bF \to N_\bF \to 0,\\
&& 0\to  M_\bF\to \tM_\bF\to \bL^\vee_\bF\to 0.
\end{eqnarray*}

Let $\Si(d)$ be the set of $d$-dimensional cones. For each $\si\in \Si(d)$. Given a maximal cone $\si\in \Si(n)$, we define the nef cone $\Nef_X$ as below
$$
\Nef_\si = \sum_{i\in I_\si}\bR_{\geq 0} D_i,\quad \Nef_{X}:=\bigcap_{\si\in \Si(n)} \Nef_\si.
$$
The $\si$-K\"{a}hler cone $C_\si$ is defined to be the interior of $\Nef_\si$; the  K\"{a}hler cone of $X$, $C_{X}$, is defined to be the interior of the  nef cone $\Nef_{X}$.

Let $\langle-, -\rangle$ be the natural pairing between
$\bL^\vee_\bQ$ and $\bL_\bQ$. We define the Mori cone $\NE_\si\subset \bL_\bR$ to be
$$
\NE_{X}:= \bigcup_{\si\in \Si(n)} \NE_\si,\quad  \NE_\si=\{ \beta \in \bL_\bR\mid \langle D,\beta\rangle \geq 0 \ \forall D\in \Nef_\si\}.
$$
Finally, we define curve classes
$$
\bK_{\eff,\si}:= \bL \cap \NE_\si,\quad \bK_{\eff}:=
\bL\cap \NE_X.
$$

\begin{assumption}[Fano condition] \label{semi-positive}
From now on, we assume $D_1+\dots + D_\fr$ is contained in the K\"ahler cone $C_X$, which is equivalent to $c_1(X)>0$, i.e. $X$ is a Fano variety.
\end{assumption}


\subsection{Landau-Ginzburg as the B-model}
\label{sec:LG-B-model}

In this subsection, we define the mirror Landau-Ginzburg model from the viewpoint of complex geometry, and identify it with
$T^*{M_\bR}=N_\bR\times M_\bR$.

We fix an integral basis $e_1,\dots,e_\fp \in \bL$ and its dual basis $e_1^\vee,\dots,e_\fp^\vee$ in $\bL^\vee$. We require that each $e_a^\vee$ is in $\Nef_X$. As discussed in \cite[p1037]{Iritani09}, this choice is always possible. We let $H_1,\dots, H_\fs$ be a $\bZ$-basis of $H^*(X;\bZ)$, and $H_a=e_a^\vee$ for $a=1,\dots,\fp$. Here $\fs=\dim H^*(X)$.

Define the \emph{charge vectors}
\[
l^{(a)}=(l_1^{(a)},\dots, l_\fr^{(a)})\in \bZ^\fr,\quad
\psi(e_a)=\sum_{i=1}^\fr l_i^{(a)} \tb_i.
\]
So
\[
D_i=\psi^\vee(D_i^\bT)=\sum_{a=1}^\fp l_i^{(a)}e_a^\vee,\quad i=1,\dots,\fp.
\]
Define the Landau-Ginzburg B-model as follows
$$
\cY_q=\{(\tX_1,\dots,\tX_\fr)\in (\bC^*)^\fr|\prod_{i=1}^\fr \tX_i^{l^{(a)}_i}=q_a,\ a=1,\dots,\fp\}.
$$
Here $q_1,\dots,q_\fr$ are \emph{complex parameters}. Apply the exact functor $\Hom(-,\bC^*)$ to the short exact sequence
\eqref{eqn:NtN} and we get
\[
1\to \Hom(N,\bC^*) \to (\bC^*)^\fr \stackrel{\mathfrak{q}}{\longrightarrow} \cM=\Hom(\bL,\bC^*)\to 1.
\]
We see that $\cY_q=\mathfrak{q}^{-1}(q)\cong (\bC^*)^k$ is a subtorus in $(\bC^*)^\fr$. Here $q=(q_1,\dots, q_\fp)$ are coordinates on $\cM$. For any $\beta\in \bL$, denote $q^\beta=\prod_{a=1}^\fp q_a^{\langle \beta, e_a^\vee \rangle}$.

Let $u_1,\dots,u_n$ and $u'_1,\dots,u'_n$ be the two sets of coordinates on $M_\bR$. Let $y_i=-v_i+2\pi \bi u_i$ and
$Y_i=e^{y_i}$. Then $y_1,\dots,y_n$ are complex coordinates on
$\tcY=M_\bR\times M_\bR\cong \bC^n$, while $Y_1,\dots,Y_n$ are complex coordinates on
$\cY= M_{\bC^*}=M_\bR/M\times M_\bR\cong T(M_\bR/M)\cong (\bC^*)^n$.

We fix a splitting of the exact sequence \eqref{eqn:NtN}, i.e. we choose a surjective map $\eta: \tN \to \bL$ such that
$\eta(\tb_i)=\sum_{a=1}^\fp \eta_{ia} e_a$ (so $e^\vee_a=\sum_{i=1}^\fr
\eta_{ia}D_i$) and $\psi\circ\eta=\mathrm{id}$, where $\eta_{ia}\in \bZ$. This splitting identifies
$\cY_q$ with $\cY=M_{\bC^*}=\Hom(N,\bC^*)$
\begin{equation}
\label{eqn:B-model-identification}
X_i=q'_iY^{b_i},\quad Y^{b_i}=\prod_{j=1}^n Y_j^{b_{ij}},\quad q_i'=\prod_{a=1}^\fp q_a^{\eta_{ia}}.
\end{equation}
Here $b_i=(b_{i1},\dots,b_{in})$ is the coordinate of $b_i$ in
$N$. We also identifies $\tcY_q$ with $\tcY=M_\bR \times M_\bR$. The splitting $\eta$ specifies an isomorphism $\tN=\bL\oplus N$ and $\tM=\bL^\vee \oplus M$.

The superpotential on $\cY_q$ is
$$
W=\sum_{i=1}^\fr \tX_i.
$$
The following holomorphic form on $\cY=M_\bR/M \times M_\bR=TM_T$
$$
\Omega=\frac{dY_1\dots dY_n}{Y_1\dots Y_n}.
$$
Here we denote $M_T=M_\bR/M\cong (S^1)^n$. Let $W_\eta$ be the function $W$ on $\cY$ once we identify $\cY_q$ with $\cY$ by $\eta$ via Equation \eqref{eqn:B-model-identification}. We still denote $W=W_\eta$ as a holomorphic function on $Y$, keeping in mind the choice $\eta$ we have made. We will see later it does not play any role in the computation of integral, and even this fact is not directly needed in the proof of the main theorem of this paper (Theorem \ref{thm:gamma-ii}). The superpotential $W$ on $\cY$ is a Laurent polynomials in $X_1,\dots, X_n$ and $q_1,\dots, q_\fp$ and we denote it by $W_q$ when fixing $q_1,\dots, q_\fp$.

\subsection{Setup of LG A-model}

\label{sec:lg-a}

Recall that in the fan $\Sigma \In N_\R$, rays $\rho_i=\bR_{\geq 0} b_i\in \Sigma(1)$ for $i=1,\dots,\fr$, while $b_i$ are primitive generators of $\rho_i \cap N$. By the smoothness condition of our toric variety $X$, each top dimensional cone $\sigma \in \Sigma$ is simplicial,  and the ray generators $b_i\in\sigma$ forms a $\Z$-basis of $N$. Let $A = \{b_1, \cdots, b_\fr\}$, and $Q = \text{ConvHull}(A)$ be the convex hull of $A$ in $N_\R$.

By Equation \eqref{eqn:B-model-identification} the superpotential $W_q$ can thus be written as
\[ W_q(z) = \sum_{\alpha \in A}^{\fr} c_\alpha(q) X^{\alpha} \]
where $c_\alpha(q) \in \C^*$ depends on choice of $q \in \cM$. If we choose $q=1 \in \cM$, then $c_\alpha(q) = 1$.

We have a canonical Log map, $\Log: M_\CS \to M_\R$, induced by $\log|\cdot |: \C^* \to \R$. Mikhalkin shows that \cites{Mikhalkin04}, the image $\cA_t:= \frac{\Log(W_q^{-1}(t ))}{\log |t|}$ of a fiber of $W_q$ over $t \in \C$ under the rescaled $\Log$ map,  converges as $|t| \to \infty$ to a polyhedral complex $\Pi_A$ in $M_\R$, which only depends on $A \In N$ and is independent of $q$. The complements of $\Pi$ has a one-to-one correspondence with elements in  $A \sqcup \{0\}$, and this $0\in N$ corresponds to the compact polytope
\[ P = \{ x \in M_\R \mid \la x, \alpha \ra \leq 1 \} \subset M_\R.
\]
$P$ is also the dual polytope to $Q \In N_\R$. By the smooth and Fano condition, $P$ is also a lattice polytope.

We choose a continuous and homogeneous degree two convex function $\varphi_\R: M_\R \to \R$, which is smooth on $M_\R \RM 0$, such that each positive dimension face $F$ of $P$ has a minimum of $\varphi$ in the interior of $F$.\footnote{We may smooth $\varphi_\R$ near $0$, but it does not matter for the statement on skeleton for a fiber of $W_q$ near $\infty$. Furthermore, our identification $M_\bR$ and $N_\bR$ is later used to translate Lagrangian cycles in $T^*M_T$ to $\cY$ to perform oscillatory integrals on, which does not require the smoothness of this identification.} In \cites{ZhouPeng18}, the second-named author shows that such function $\varphi$ exists and has a contractible choice. Let $\varphi = \Log^{*}(\varphi_\R)$ be a Kahler potential on $M_\CS$, and
\[ \lambda_\varphi = - d^c \varphi, \quad \omega_\varphi = -dd^c\varphi \] be the Liouville one-form and symplectic two-form on $M_\CS$. If we fix an identification of $M_\CS \cong (\CS)^n$, with complex coordinates $z_i$ and polar coordinates $(\rho_i, \theta_i) \in \R \times S^1$, such that $z_i = e^{\rho_i + i \theta_i}$, then we have
\[ \lambda_\varphi = \sum_i \pa_i \varphi_\R(\rho) d \theta_i , \quad \omega_\varphi = \sum_{i,j} \pa_{ij} \varphi_\R(\rho) d\rho_i \wedge d \theta_i. \]
The Riemannian metric defined by $g_\varphi (X, Y) = \omega_\varphi(X, JY)$ is then
\[ g_\varphi = \sum_{i,j}  \pa_{ij} \varphi_\R(\rho)( d\rho_i \ot d\rho_j + d\theta_i \ot d\theta_j). \]

The above choice of $\varphi_\R: M_\R \to \R$ induces a Legendre transformation
\[ \Psi_\R: M_\R \congto N_\R, \quad \rho \mapsto - d \varphi_\R|_\rho \in T^*_\rho M_\R \cong N_\R. \]
The extra minus sign here is added for the purpose of integration which we will discuss in next section. Since $\varphi_\bR$ is convex and homogeneous degree $2$, $\Psi_\R$ sends rays in $M_\R$ to that in $N_\R$. Thus, using canonical isomorphism $M_\CS \cong M_T \times  M_\R$ and $T^*M_T \cong M_T \times N_\R$, we have
\beq \Psi: M_\CS  \congto T^*M_T. \label{legendre} \eeq
If we equip $T^*M_T \cong \{(\theta_i, p_i) \in T^n \times \R^n\} $ with the exact symplectic structure :
\[ \lambda_{std} = -\sum_i p_i d \theta_i, \quad \omega_{std} = \sum_{i} d \theta_i \wedge dp_i, \]
then one easily checks that,  $\Psi$ preserves the  Liouville structure
\[ \Psi^*(\lambda_{\mathrm{std}}) = \lambda_{\varphi}, \quad  \Psi^*(\omega_{\mathrm{std}}) = \omega_{\varphi}. \]


\section{Oscillatory integrals, GW invariants and Gamma class}

\subsection{Lattice generated by Lefschetz thimbles}
\label{sec:thimbles}

We define regions $\Ue$ and $\Uee$ for complex parameters $q_1,\dots, q_\fp$ as below
\begin{align*}
  &\Ue=\{|q_i|<\epsilon, i=1,\dots,\fp\}.\\
  &\Uee=\{|q_i|<\epsilon, -\epsilon'< \arg q_i<\epsilon, i=1,\dots,\fp\}.
\end{align*}
For any $q\in \cM$ we define
\[
\bH_q=H_n(\cY, \Re(W_q)\gg 0;\bZ).
\]
By \cite[Lemma 3.8 and Proposition 3.12]{Iritani09}, when $|q|\in \Ue$, $\bH_q\cong \bZ^\fs$ where $\fs=\dim H^*(X)$ for some small $\epsilon$. We choose such $\epsilon$ and requrie $q\in \Ue$. We denote $\Uec$ and $\Ueec$ to be an open subset of $\Ue$ and $\Uee$ such that $W_q$ is holomorphic Morse with distinct critical values respectively. They are open and dense subsets.
Let $\crv_1,\dots,\crv_\fs$ be such critical values.
\begin{definition}
An \emph{admissible phase $\theta$ of $W_q$} satisfies
\[
\Im(\crv_i e^{-\bi \theta})\neq \Im(\crv_j e^{-\bi \theta}),
\]
i.e. the segment between $\crv_i$ and $\crv_j$ is not parallel to $e^{\bi\theta}$. Here we denote $\bi=\sqrt{-1}$.
\end{definition}

\begin{definition}
  \label{def:thimbles-cycle}
We define the Lefschetz thimbles $\Gamma_i$ in phase $\theta$ to be the Leschetz thimbles associated to $\gamma_i$ where $\gamma_i$ is illustrated in the following figure.
\begin{figure}[h]
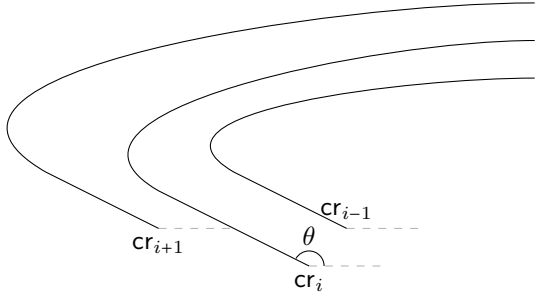

\centering{
\tikz{
\draw (0,0) to  +(-2,1) to[out=150, in =180]  (3,3);
\draw [dashed, opacity=0.3] (0, 0) to +(1,0);
\node [below] at (0,0) {$\mathsf{cr}_{i}$};
\draw (0.5, 0.5) to +(-1.5, 0.75) to[out=150, in =180]  (3,2.5);
\draw [dashed, opacity=0.3] (0.5, 0.5) to +(1,0);
\node [right, above] at (0.5,0.5) {$\mathsf{cr}_{i-1}$};
\draw (-2,0.5) to +(-1.5,0.75) to[out=150, in =180] (3, 3.5);
\draw [dashed, opacity=0.3] (-2, 0.5) to +(1,0);
\node [below] at (-2,0.5) {$\mathsf{cr}_{i+1}$};

\draw (0.2, 0) arc (0:150:0.2);
\node at (0,0.4) {$\theta$};
}
\caption{Vanishing paths and Fukaya-Seidel category}
\label{fig:topView}
}

\end{figure}


Each $\gamma_i$ starts from the critical value $\crv_i$ in a straight line of direction $\theta$, and turns to the direction of a very small positive argument $\delta$ (clockwisely of $\theta>\delta$ or counterclockwisely if $\theta<\delta$). It then becomes a ray in the direction of $e^{\bi \delta}$ and goes to $e^{\bi\delta}\infty$. Since $\theta$ is admissible, these $\gamma_i$ do not intersect each other.
\end{definition}

These Lefschetz thimbles $\Gamma_1,\dots, \Gamma_\fs$  are generators of $\bH_q$. For any cycle $\Gamma\in \bH_q$, we use
\[
\int_{\Gamma} e^{-\frac{W}{z}}\Omega
\]
to denote the integration of the differential form $e^{-\frac{W}{z}}$ on $\Gamma$ for any $\Re (z)>0$. 

There is a pairing in $\bH_q$. We may parallel translate $\Gamma\in H_n(\cY, \Re (W_q) \gg 0;\bZ)$ to $e^{\bi \pi} \Gamma\in H_n(\cY, \Re (W_q) \ll 0;\bZ)$ by isotope the class through $H_n(\cY, \Re ( e^{-\bi\theta}W_q) \gg 0;\bZ)$ for $\theta$ from $0$ to $\pi$, i.e. rotating the tail of the vanishing paths counter-clockwise by 180 degree. Then we define the pairing of $\Gamma,\Gamma'\in \bH_q$ by the signed count of the intersection number
\[
S(-,-): \bH_q \times \bH_q \to \bZ, \quad S(\Gamma, \Gamma') := \sharp(e^{\bi \pi} \Gamma \cap \Gamma').
\]
This is also the perfect pairing between $H_n(\cY,\Re(W_q)\gg 0;\bZ) $ with $H_n(\cY,\Re(W_q)\ll 0;\bZ)$. Define the \emph{Stokes matrix} of $(\cY,W)$ at phase $\theta$
\[
S_{ij}=S(\Gamma_i,\Gamma_j).
\]
In particular this is an upper triangular integral matrix (\cite[Corollary 4.12]{Iritani09}) \footnote{Our sign convention slightly differs from Iritani's, but $S_{ij}$ remains an upper triangular matrix as in there. }
\[ S_{ii}=1, \quad \text{and}\;\; S_{ij}=0 \, \forall i > j \]

\subsection{Genus-0 descendant potential and Iritani's theorem}

We fix some notions on Gromov-Witten theory.
\begin{definition}
  \label{def:gw-potential}
  Let $X$ be a complete toric manifold. We define genus $g$, degree $d\in H_2(\cX;\bZ)$, descendant Gromov-Witten invariants of $\cX$ as
  \[
    \langle \tau_{a_1}(\gamma_1)\dots \tau_{a_n}(\gamma_n)\rangle_{g,n,d}^{X}=\langle \gamma_1 \psi_1^{a_1}\dots \gamma_n\psi_n^{a_n}\rangle_{g,n,d}^{X}=\int_{[\overline{\cM}_{g,n}(X;d)]^\vir}\prod_{j=1}^n \psi_j^{a_j}\ev^*_j(\gamma_j)\in \bQ,
  \]
  where $\gamma_i\in H^*(X)$ and $\ev_j:\overline{\cM}(X;d)\to X$ is the $j$-th evaluation map. Let $\btau\in H^{2}(X;\bC)$. We also define
  \begin{align}
    \label{eqn:correlator}
   \llangle \tau_{a_1}(\gamma_1),\dots,\tau_{a_n}(\gamma_n)\rrangle_{g,n}^{\cX}=\llangle \gamma_1 \psi_1^{a_1},\dots,\gamma_n\psi_n^{a_n}\rrangle_{g,n}^{\cX}\\\nonumber
   =\sum_{d\in \bK_{\mathrm{eff}}} \sum_{\ell=0}^\infty \frac{1}{\ell!}\langle \tau_{a_1}(\gamma_1),\dots, \tau_{a_n}(\gamma_n),\underbrace{\tau_0(\btau),\dots,\tau_0(\btau)}_{\text{$\ell$ times}}\rangle_{g,n+\ell,d}^{\cX}.
 \end{align}
\end{definition}
We do not use Novikov variables since the convergence issue regarding $\btau$ is resolved in \cites{Iritani07,CoIr15}, or after invoking the mirror theorem and the oscillatory integral expression of the $I$-function. The Equation \eqref{eqn:correlator} is a complex analytic function of $e^{\btau}$ in $\Uee$ for small $\epsilon$. We further define
\[
\llangle f(\psi/z),\dots \rrangle_{g,h}^X=\sum_{i\geq 0} z^{-i}\llangle a_i \psi^i,\dots\rrangle_{g,h}^X,
\]
where $f=\sum_{i\geq 0} a_i z^i$ is an analytic function at $0$.

Define a degree operator $\widetilde{\deg}: H^*(X)\to H^*(X)$ such that  \[\widetilde{\mathrm{deg}}\vert_{H^{2p}(X)}=(p-\frac{\dim(X)}{2})\mathrm{id}_{H^{2p}(X)}.\]
We cite the main theorem from Iritani's paper \cite[Theorem 4.11]{Iritani09}.
\begin{theorem}
  \label{thm:iritani}
  There is an isomorphism $\si:K(X)\to \bH_q$ such that
  \begin{itemize}
    \item $\chi(V,W)=S(\si(V),\si(W)),\forall V,W\in K(X)$.
    \item We have the following identity
    $$\llangle1, \frac{z^{-\widetilde \deg}z^{c_1(X)}\boldA_V}{z+\psi}\rrangle_{0,2}^X=(-2\pi z)^{-n/2}\int_{\si(V)}e^{-\frac{W}{z}}\Omega.$$
  \end{itemize}
  Here $z>0$ and \emph{higher asymptotic classeses} $\boldA_V=\Ch(V)\hat\Gamma_X$, where $\Ch(V)$ is the modified Chern character
  \[
  \Ch(V)=\sum_{p\geq 0}(2\pi\bi)^p \ch_p(V).
  \]
  We understand $z^s$ as $\exp(s\log z)$ for $\log z\in \bR$. The \emph{Gamma class}
  \begin{equation}
    \label{eqn:Gamma}
    \hat\Gamma_X=\sum_{i=1}^k\Gamma(1+\delta_i),
  \end{equation}
  where $\delta_i$ are Chern roots of $TX$.
\end{theorem}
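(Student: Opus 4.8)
The plan is to deduce the theorem from Givental's toric mirror theorem together with an explicit evaluation of the oscillatory integral in the large-radius chamber $q\in\Ue$. First I would compute the right-hand side. Parametrize a generating Lefschetz thimble $\Gamma_i$ from Definition~\ref{def:thimbles-cycle}, substitute $W_q=\sum_\alpha c_\alpha(q)X^\alpha$, expand $e^{-W_q/z}$ in the subdominant monomials, and integrate term by term using $\int_0^\infty t^{s-1}e^{-t}\,dt=\Gamma(s)$; summing over the effective curve lattice $\bK_{\eff}$, the coefficients assemble into products of $\Gamma$-values, which --- via the factorization $\prod_i\Gamma(1+D_i\cdot(\,\cdot\,))$ and Euler's product formula --- regroup as $\hat\Gamma_X$ times an explicit $H^*(X)$-valued power series in $q,z$. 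One recognizes this series, after the normalization $(-2\pi z)^{-n/2}$ and the twists $z^{-\widetilde{\deg}}z^{c_1(X)}$, as the cohomology-valued $I$-function $I_X(q,z)$ of $X$ paired with $\Ch(V)\hat\Gamma_X=\boldA_V$, the class $V\in K(X)$ encoding which relative cycle (equivalently, which chamber and residue) one has selected.

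Next, by Givental's mirror theorem for toric Fano manifolds, $I_X(q,-z)$ coincides with the $J$-function $J_X$ --- the mirror map is trivial in the Fano case once the $\sum_a\log q_a$ term is stripped --- and the expression $\llangle 1,\frac{z^{-\widetilde{\deg}}z^{c_1(X)}\boldA_V}{z+\psi}\rrangle_{0,2}^X$ is, in the standard dictionary between the $2$-point descendant bracket and the fundamental solution of the quantum connection, exactly $\langle\boldA_V,J_X\rangle$. Comparing with the previous paragraph yields the integral identity, and one defines $\si(V)$ to be the integral cycle realizing it. This $\si$ is a lattice isomorphism: $\Ch$ is a $\bQ$-isomorphism and $\hat\Gamma_X$ is invertible, so $\{\boldA_V\}_{V\in K(X)}$ spans $H^*(X)$; $\bH_q\cong\bZ^\fs$ by \cite[Lemma~3.8, Proposition~3.12]{Iritani09}; and one checks that for a $\bZ$-basis of $K(X)$ built from line bundles $\cO(c)$ twisted along the toric divisors, the cycles $\si(\cO(c))$ form a $\bZ$-basis of $H_n(\cY,\Re(W_q)\gg 0;\bZ)$.

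For the pairing statement $\chi(V,W)=S(\si(V),\si(W))$ I would compare Gram matrices in the thimble basis. On the B-side, the intersection pairing $S$ of relative cycles is governed by the exponential growth rates of periods, i.e.\ by the critical values $\crv_i$, so in the basis $\Gamma_1,\dots,\Gamma_\fs$ it equals the Stokes matrix of $(\cY,W_q)$, which is upper-triangular unipotent (recorded after Definition~\ref{def:thimbles-cycle} via \cite[Corollary 4.12]{Iritani09}). On the A-side, these thimbles go over under $\si$ to a mutation of an exceptional collection, whose Euler matrix is also upper-triangular unipotent; matching the entries reduces to the Hirzebruch--Riemann--Roch fact that the $\hat\Gamma$-twisted modified Chern character intertwines the Euler form on $K(X)$ with the topological intersection pairing on cycles --- ultimately the reflection-formula identity relating $\hat\Gamma_X\,\overline{\hat\Gamma_X}$ to the $\hat A$- and Todd classes.

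The hard part is the middle step: showing that the cycle $\si(V)$ obtained analytically from the period is genuinely \emph{integral}, i.e.\ lies in $H_n(\cY,\Re(W_q)\gg 0;\bZ)$, and that these cycles fill out the whole lattice as $V$ ranges over $K(X)$. The period computation only determines $\si(V)$ up to asymptotic and monodromy data; identifying the honest integral cycle requires controlling the degeneration of the Lefschetz thimbles as $q\to 0$ and matching them to $K$-theory through the combinatorics of the fan $\Si$ --- which is precisely the input that the homological-mirror-symmetry picture developed later in this paper is designed to provide.
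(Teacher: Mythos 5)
You should first note what the paper actually does here: Theorem \ref{thm:iritani} is not proved in this paper at all; it is quoted verbatim as \cite[Theorem 4.11]{Iritani09}, so the only meaningful comparison is with Iritani's own argument. Your sketch does follow the broad outline of that argument (termwise oscillatory integration producing $\Gamma$-factors that assemble into $\hat\Gamma_X$ times the $I$-function, Givental's mirror theorem with trivial mirror map in the Fano case, and the $\Gamma$-class/Riemann--Roch compatibility for the pairing), but as a standalone proof it has two genuine problems. First, the lattice statement: the expansion-by-monomials step is only directly valid over the distinguished positive real cycle (the mirror of $\cO_X$), not over an arbitrary thimble $\Gamma_i$, and your proposed way of pinning down integrality of $\si(V)$ --- ``use the HMS picture developed later in this paper'' --- is circular in this paper's logical structure: Theorem \ref{thm:central} and the identity imported from \cite{Fang16} that it rests on are themselves consequences of Theorem \ref{thm:iritani}, so they cannot be used to prove it. Iritani avoids this by \emph{defining} $\si$ through the $\hat\Gamma$-integral structure, computing the period of the real cycle explicitly, and then generating the remaining integral cycles by the monodromy (Galois) action in $q$, which on the $K$-theory side twists by line bundles $\cO(D_i)$; since line bundles generate $K(X)$ for a smooth toric variety, this produces a genuine $\bZ$-basis on both sides.

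Second, the pairing claim as you state it does not close: knowing that the Gram matrix of $S$ in the thimble basis and the Euler matrix of an exceptional collection are both upper-triangular unipotent says nothing about their equality entry by entry, and at this stage of the argument you have no identification of the thimbles with specific objects of $\Coh(X)$ anyway (that identification is exactly what the later sections of this paper supply via \cite{GPS18-2}). The actual proof compares the two pairings directly: one computes $S(\si(V),\si(W))$ from the asymptotics of the periods and matches it with $\chi(V,W)$ through the Gamma-function reflection formula $\Gamma(1+x)\Gamma(1-x)=\pi x/\sin(\pi x)$ combined with Hirzebruch--Riemann--Roch, with no appeal to upper-triangularity or to Stokes matrices. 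So the proposal is a reasonable reconstruction of the strategy of \cite{Iritani09}, but the two steps above are exactly where the content lies, and in the form written they are either unjustified or circular relative to this paper.
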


\subsection{Quantum cohomology and Frobenius algebra}

The dimension of $H^*(X;\bC)$ is $\fs$ -- recall that in Section \ref{sec:LG-B-model} we have chosen $e^\vee_1,\dots,e^\vee_\fp\in \bL^\vee\cong H^2(X;\bZ)$ -- we let $H_i=e^\vee_i$ for $i=1,\dots,\fp$ and let $H_1,\dots,H_\fs$ be a homogeneous basis of $H^*(X;\bC)$. It dual basis in $H^*(X;\bC)$ are $H^1,\dots, H^\fs$.

The \emph{small quantum product} of the toric Fano variety $X$ is defined as the following
\begin{align*}
 \alpha\star_\btau\beta&=\sum_{\ell\geq 0,\beta\in \bK_\eff}\sum_{a=1}^\fr \langle\alpha,\beta,H_a,\btau,\dots,\btau\rangle_{0,3+\ell,\beta}^X H^a\\
  &=\sum_{\beta\in \bK_\eff(X)} \sum_{a=1}^\fr \langle \alpha,\beta,H_a\rangle_{0,3,\beta}^X e^{\langle\btau,\beta\rangle} H^a.
\end{align*}
where $\btau=\tau_1 H_1+\dots +\tau_\fp H_\fp\in H^2(X)$, $e^{\langle\btau,\beta\rangle}=e^{\tau_1\langle\beta,H_1\rangle }\dots e^{\tau_n\langle\beta,H_\fp\rangle}$ while the second equality is from the divisor equation. Since $X$ is Fano, there is only finitely many curve classes $\beta$ such that $\langle \alpha,\beta, H_a\rangle_{0,3,\beta}^X\neq 0$ due to the dimension reason, so the above definition is well-defined for all $\btau$.\footnote{The convergence of the big quantum cohomology when $\btau$ is not necessarily in degree $2$ is unknown in general.}

The quantum product $\star_\btau$ and together with the usual non-degenerate pairing
\begin{align}
  \label{eqn:cohomology-pairing}
  (\alpha,\beta)=\int_X \alpha\cup\beta,\ \alpha,\beta\in H^*(X),
\end{align}
gives the cohomology group $H^*(X)$ a Frobenius algebra structure. We denote this algebra by $QH^*_\btau(X)$.

We say the quantum cohomology $QH^*(X)$ is \emph{semisimple} at $\btau$ if there are basis $\phi_1,\dots,\phi_\fs\in H^*(X)$ such that
\[
\phi_i\star_{\btau}\phi_j=\delta_{ij} \phi_i.
\]
This basis $\{\phi_i\}_{i=1}^\fs$ is called the \emph{idempotent basis} or \emph{canonical basis}. It is unique up to permutation. Being semisimple is an open condition for the parameter $\btau$, and for toric Fano variety $X$ the quantum cohomology is generically semisimple on $e^\btau \in \Uee$ for sufficiently small $\epsilon$ \cite[Corollary 4.9]{Iritani09}. The canonical basis is then an $H^*(X)$-valued function of $\btau$ and we denote it by $\phi_i(\btau)$ in case we want to emphasize its dependence on $\btau$.

We define the following quantum connection as a meromorphic connection on the trivial bundle $\cF: H^*(X)\times (H^2(X)\times \bP^1)\to H^2(X)\times \bP^1$
\begin{align}
  \nabla_{\alpha}&=\partial_\alpha+\frac{1}{z}\alpha\star_\btau,
  \label{eqn:qde-tau}
  \\
  \nabla^\btau_{z\partial z}&=z\frac{\partial}{\partial z}-\frac{1}{z}(E\star_\btau)+\widetilde{\mathrm{deg}}.\label{eqn:qde-tau-off}
\end{align}
Here the \emph{Euler vector field} is
\[E=c_1(X)+\sum_{a=1}^\fr\left( 1-\frac{1}{2}\deg H_a\right)\tau_a H_a.\]
Note that $\nabla_{z\partial z}^\btau$ does not  involve taking derivative in the $\btau$-direction.

The \emph{mirror map} $q=q(\btau)$ relates B-model parameters $q=q_1,\dots,q_\fp$ with A-model parameters $\tau_1,\dots, \tau_\fp$\footnote{Since $X$ is Fano the mirror map takes such a simple form.}
\begin{equation}
q_a=\exp(\tau_a),\ a=1,\dots,\fp.
\label{eqn:mirror-map}
\end{equation}
We denote $\tUe=q^{-1}(\Ue)$, $\tUec=q^{-1}(\Uec)$, $\tUee=q^{-1}(\Uee)$, and $\tUeec=q^{-1}(\Ueec)$.

\subsection{Flat sections from Gromov-Witten potentials}

We define the following descendants operator for $\alpha\in H^*(X)$.
\[
L(\btau,z)\alpha:=e^{-\frac{\btau}{z}}\alpha -\sum_{d\in \NE_X}\sum_{a=1}^\fs H^a\langle H_a,\frac{e^{-\frac{\btau}{z}}\alpha}{z+\psi}\rangle_{0,2,d}e^{\langle\btau,d\rangle}.
\]
The convergence of this operator $L(\btau,z)$ is known on $\tUe \times \bC^*$ for sufficiently small $\epsilon$ since the convergence of $\star_\btau$ and the fact that $L(\btau,z)\alpha$ satisfies Equation \eqref{eqn:qde-tau} \cite[Proposition 2.4]{Iritani09}.

\begin{proposition}[Definition 2.5, Equation (19) and Definition 2.9 of \cites{Iritani09}]
  For any $K$-group element $E\in K(X)$, the generating function
  \begin{equation}
    \label{eqn:S}
    \cZ(E)=L(\btau,z)z^{-\widetilde{\deg}}z^{c_1(X)} \Ch(E)\hat\Gamma_X=\sum_{i=1}^\fs \llangle H_i,\frac{z^{-\widetilde{\deg}}z^{c_1(X)}\boldA_{[E]}}{z+\psi}\rrangle_{0,2}^X H^i.
  \end{equation}
  is a flat section of the quantum connection $\nabla$.
\end{proposition}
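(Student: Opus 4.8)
The plan is to show that $\cZ(E)$ as defined in Equation \eqref{eqn:S} is a flat section of the quantum connection $\nabla$ by reducing to the flatness of the fundamental solution $L(\btau,z)$ together with the compatibility of the two extra operators $z^{-\widetilde{\deg}}$ and $z^{c_1(X)}$ with $\nabla^\btau_{z\pa z}$. First I would recall that $L(\btau,z)$ is itself a (matrix-valued) fundamental solution of the quantum connection in the $\btau$-directions: for any fixed cohomology class $\alpha$, $\nabla_\beta (L(\btau,z)\alpha) = 0$ for all $\beta \in H^2(X)$, which is precisely the content of \cite[Proposition 2.4]{Iritani09} cited just above. Since $z^{-\widetilde{\deg}}z^{c_1(X)}\Ch(E)\hat\Gamma_X$ is a \emph{constant} element of $H^*(X)$ (it does not depend on $\btau$), applying $L(\btau,z)$ to it immediately gives a section that is flat for the connection components $\nabla_\beta$. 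So the only thing that genuinely requires checking is the equation $\nabla^\btau_{z\pa z}\cZ(E) = 0$ in the $z$-direction.

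For the $z$-direction I would use the known commutation relation between $L(\btau,z)$ and the operator $z\pa_z$. The standard identity (see \cite[\S2]{Iritani09}) is that $L(\btau,z)$ intertwines the connection $\nabla^\btau_{z\pa z}$ with the ``trivial'' connection $z\pa_z + \widetilde{\deg} + \tfrac{1}{z}(\text{something})$ acting on the constant vector; more precisely, $L(\btau,z)\, z^{-\widetilde{\deg}} z^{c_1(X)}$ is exactly the normalization that turns a $\nabla^\btau_{z\pa z}$-flat section into a constant, because $z^{-\widetilde{\deg}}z^{c_1(X)}$ is designed to absorb the $\widetilde{\deg}$ term and the leading $E\star_0$-term (the classical cup product by $c_1(X)$) at the large-radius limit. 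Concretely, I would compute
\[
\nabla^\btau_{z\pa z}\Big(L(\btau,z)\,z^{-\widetilde{\deg}}z^{c_1(X)}\,v\Big)
= L(\btau,z)\Big(z\pa_z + z^{-\widetilde{\deg}} z^{c_1(X)}\big(\text{correction}\big) z^{-c_1(X)} z^{\widetilde{\deg}}\Big) z^{-\widetilde{\deg}}z^{c_1(X)} v
\]
and observe that, by the explicit form of $\widetilde{\deg}$ and the relation $z^{-\widetilde{\deg}} z^{c_1(X)} z\pa_z z^{-c_1(X)} z^{\widetilde{\deg}} = z\pa_z - \widetilde{\deg} + z^{-1} c_1(X)\cup$, all correction terms cancel against the $-\tfrac1z(E\star_\btau) + \widetilde{\deg}$ in $\nabla^\btau_{z\pa z}$, leaving $z\pa_z v = 0$ for constant $v$. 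Then the second equality in \eqref{eqn:S}, which merely rewrites $L(\btau,z)z^{-\widetilde{\deg}}z^{c_1(X)}\Ch(E)\hat\Gamma_X$ as a sum of descendant correlators $\llangle H_i, \tfrac{z^{-\widetilde{\deg}}z^{c_1(X)}\boldA_{[E]}}{z+\psi}\rrangle_{0,2}^X H^i$ using the string/dilaton-type unfolding of $L(\btau,z)$, follows by definition-chasing and does not affect flatness.

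The main obstacle I anticipate is bookkeeping the precise form of the commutator $[z\pa_z, z^{-\widetilde{\deg}} z^{c_1(X)}]$ and verifying that the term $z^{c_1(X)}$ — whose exponent $c_1(X)$ acts nilpotently by cup product — conjugates the connection exactly into the shape needed, since $\widetilde{\deg}$ and $c_1(X)\cup$ do not commute. One has to track that $z^{-\widetilde{\deg}} (c_1(X)\cup) z^{\widetilde{\deg}} = z^{-1} c_1(X)\cup$ (because $c_1 \in H^2$ shifts degree by one), and that the classical limit of $E\star_\btau$ as $q\to 0$ is $c_1(X)\cup$, so that the non-classical quantum corrections to $E\star_\btau$ are precisely matched by the descendant tail of $L(\btau,z)$. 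This is a purely formal computation in $\End(H^*(X))[\![z,z^{-1}]\!]$, carried out in detail in \cite{Iritani09}; here I would simply cite it, since the proposition is stated as a citation of Iritani's Definition~2.5, Equation~(19), and Definition~2.9, so the proof amounts to assembling those three ingredients and checking the two connection equations as above.
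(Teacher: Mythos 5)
Your proposal is correct and takes essentially the same route as the paper, which offers no independent proof but simply quotes Iritani (Definition 2.5, Equation (19), Definition 2.9): $\btau$-flatness because $L(\btau,z)$ is the fundamental solution of \eqref{eqn:qde-tau}, and $z$-flatness from conjugating $\nabla^\btau_{z\partial z}$ through the factor $z^{-\widetilde{\deg}}z^{c_1(X)}$, using $z^{-\widetilde{\deg}}(c_1(X)\cup)z^{\widetilde{\deg}}=z^{-1}c_1(X)\cup$. The one caveat is your phrase ``purely formal'': the cancellation against $-\tfrac{1}{z}(E\star_\btau)+\widetilde{\deg}$ is not formal but rests on the dimension axiom and divisor equation for descendant invariants (this is precisely what the cited identities of Iritani encode), so deferring that step to \cite{Iritani09}, as you do, is the right move.
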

\begin{remark}
  Setting $z>0$ and choose $\log z\in \bR$ as in Theorem \ref{thm:iritani}. We get a single valued section on $\Uee\times \{0,\infty\}$. When we fix $\btau\in \Uee$ as a constant, this provides a solution to \eqref{eqn:qde-tau-off}.
\end{remark}

\subsection{Mirror symmetry for quantum cohomology}

Here we introduce a related B-model description of the D-module in \cite{Iritani09}. We do not explicitly describe an B-model D-module but just state facts from \cites{Iritani09} for our purposes here.

Let $\cM=(\bC^*)^\fp$. We exclude bad points and denote the remaining Zariski open and dense $\cM^\circ$ such that for $q\in \cM^\circ$, $W_q$ is \emph{non-degenrate at infinity} \cite[Definition 3.6]{Iritani09}. For sufficiently small $\epsilon$, $\Ue\subset \cM^\circ$ \cite[Lemma 3.8]{Iritani09}.

Following \cite[p1043]{Iritani09}, we define
\[
R^\vee_{\bZ,(q,z)}=H_n(Y_q,\{\mathrm{Re}(W_q/z)\gg 0\};\bZ),
\]
where $(q,z)\in \cM^\circ\times \bC^*$. These relative homology groups form a local system $R^\vee_\bZ$ of rank $\fr$ over $\cM^\circ\times \bC^*$. When $z>0$, $R^\vee_{\bZ,(q,z)}=\bH_q$ in our notion. For $q\in \Uee$ with small $\epsilon'$, any $\Gamma \in \bH_q$ extends to a flat section of $R^\vee_\bZ$ over $\Uee\times \widetilde \bC^*$ where $\widetilde \bC^*=\bC$ is the unviersal cover of $\bC^*$.

Let $\sf_a=\frac{\partial W}{\partial \tau_a}\in \cO_{\cM^\circ \times \bC^*}[X_1^\pm,\dots,X_n^\pm]$ for $a=1,\dots,\fp$.  We define an operator $D_a=-z\frac{\partial}{\partial\tau_a}+\sf_a$. Since $\{H_a\}_{a=1}^\fs$ multiplicatively generate $QH_\btau^*(X)$, one may write $H_i,\ i=\fp+1,\dots,\fr$ in the following form
\begin{align*}
  H_i&=\sum_{j=1}^{k_i} A_{ij}(e^\btau)H_{s_{ij_1}}\star_\btau \cdots \star_\btau H_{s_{ij_{r_{ij}}}},\quad s_{ij_l}\in\{1,\dots,\fs\},\\
  H^i&=\sum_{j=1}^{k^i} A^i_j(e^\btau)H_{s^i_{j_1}}\star_\btau \cdots \star_\btau H_{s^i_{j_{r^i_j}}},\quad s^i_{j_l}\in\{1,\dots,\fs\}.
\end{align*}
Then for $i=\fp+1,\dots,\fs$ we define
\begin{align*}
&\sf_i=\sum_{j=1}^{k_i}  A_{ij}(q)D_{s_{ij_1}}\cdots D_{s_{ijr_{ij}}}1,\\
&\sf^i=\sum_{j=1}^{k^i}  A^i_{j}(q)D_{s^i_{j_1}}\cdots D_{s^i_{jr^i_{j}}}1.
\end{align*}
By the definition of $\sf_i,\sf^i$ are in $\cO_{\cM^\circ \times \bC^*}[X_1^\pm,\dots,X_n^\pm]$ and actually they are a polynomial in $z$. We cite Iritani's identification of these two quantum D-modules \cite[Proposition 4.8]{Iritani09} in the following way.
\begin{theorem}[Iritani]
  \label{thm:D-mod-isomorphism}
  Over $q\in \Ue$ for some small $\epsilon>0$, there is an morphism
  \[
  \Mir: (\btau\times \mathrm{id})^* (\cF/H^2(X;\bZ)) \to \cO_{\cM^\circ\times \bC^*}
  \]
  such that
  \begin{itemize}
    \item The quotient of $H^2(X;\bZ)$ is on the base of this D-module $H^*(X)\times \bP^1$. Indeed the quantum cohomology at $\btau$ is the same as $\btau+2\pi\bi\btau'$ for any $\btau'\in H^2(X;\bZ)$.
    \item $\Mir(H_{a_1}\star_\btau \dots\star_\btau H_{a_s})=(D_{a_1}\dots D_{a_s} 1),$ and then $\Mir(H_i)=\sf_i, \Mir(H^i)=\sf^i$.
    \item The image of $\Mir$ is also equipped with a D-module structure. In particular, a flat section $\sf$ is characterized by the following pairing with any flat section $\Gamma$ of $R^\vee_{\bZ}$ being constant
    \[
    \int_\Gamma \sf e^{-\frac{W}{z}}\Omega.
    \]
    The map $\Mir$ preserves this flat structure.
  \end{itemize}
\end{theorem}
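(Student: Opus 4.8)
The plan is to follow Iritani \cite[Proposition 4.8]{Iritani09} and assemble the morphism $\Mir$ from the building blocks already set up in this section, checking at each stage that the quantum-cohomology relations on the A-side are matched by differential-operator relations on the B-side. First I would define $\Mir$ on the canonical generators: since $\{H_a\}_{a=1}^\fs$ multiplicatively generate $QH^*_\btau(X)$ and in fact $H_1,\dots,H_\fp$ generate the degree-two part whose quantum products span everything in the Fano case, I set $\Mir(H_{a_1}\star_\btau\cdots\star_\btau H_{a_s}) := D_{a_1}\cdots D_{a_s}1$, where $D_a=-z\pa/\pa\tau_a + \sf_a$ and $\sf_a = \pa W/\pa\tau_a$. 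The first thing to verify is that this is \emph{well-defined}, i.e. independent of the way a cohomology class is written as a combination of quantum products of the $H_a$: this is exactly the statement that the ideal of relations among the $H_a$ in $QH^*_\btau(X)$ maps into the left ideal of relations among the $D_a$ acting on $1$ in the B-model D-module. Here I would use the mirror theorem (the $I=J$ identification, or equivalently the oscillatory-integral expression of flat sections from Theorem \ref{thm:iritani}): the operators $D_a$ satisfy the quantum differential equation \eqref{eqn:qde-tau} applied to the oscillatory-integral flat sections $\int_\Gamma e^{-W/z}\Omega$, so any A-model relation $\sum c_J H_{a_{J,1}}\star_\btau\cdots = 0$ produces an operator $\sum c_J D_{a_{J,1}}\cdots$ annihilating all these flat sections, hence annihilating $1$ modulo the D-module structure, by the perfectness of the pairing $\int_\Gamma \sf\, e^{-W/z}\Omega$ between flat sections of the D-module and flat sections $\Gamma$ of $R^\vee_\bZ$ over $\Ue$.

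Once $\Mir$ is well-defined, the formulas $\Mir(H_i)=\sf_i$ and $\Mir(H^i)=\sf^i$ for $i=\fp+1,\dots,\fs$ are immediate from the very definitions of $\sf_i,\sf^i$, which were obtained by substituting $D_a$ for $H_a\star_\btau$ in the chosen expressions $H_i=\sum_j A_{ij}(e^\btau)H_{s_{ij_1}}\star_\btau\cdots$. The compatibility with the D-module (flat) structure is then checked by comparing connection operators: on the A-side $\nabla_\alpha = \pa_\alpha + z^{-1}\alpha\star_\btau$ and $\nabla_{z\pa z}^\btau$ as in \eqref{eqn:qde-tau}--\eqref{eqn:qde-tau-off}, and on the B-side the corresponding operators $-z\pa/\pa\tau_a$ (matching $\pa_{H_a}+z^{-1}H_a\star_\btau$ after the shift by $\sf_a$) and an Euler-type $z\pa_z$ operator; one verifies $\Mir\circ\nabla = \nabla^{B}\circ\Mir$ on generators, which suffices since both sides are D-module morphisms. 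For the first bullet, I would observe that the $\star_\btau$-product and the whole correlator structure depend on $\btau$ only through $e^\btau$ (the divisor equation was used precisely to get this form), so everything descends to $H^*(X)\times\bP^1$ after quotienting the $H^2(X;\bZ)$-translations, and this matches the mirror map \eqref{eqn:mirror-map} $q_a=\exp(\tau_a)$; I would also note that convergence of all these series on $\Ue$ for small $\epsilon$ is guaranteed by the Fano condition and the results cited around \eqref{eqn:correlator} and Theorem \ref{thm:iritani}.

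The main obstacle I expect is the well-definedness step, i.e. showing that the full ideal of quantum relations among the $H_a$ is generated by — and maps onto — relations in the B-model D-module. The clean way around this is to invoke Theorem \ref{thm:iritani} to transport the entire problem to the relative-homology side: a class $\sf$ in the image is forced by the requirement that $\int_\Gamma \sf\, e^{-W/z}\Omega$ be flat (constant along flat sections $\Gamma$ of $R^\vee_\bZ$) as stated in the third bullet, and since the pairing between the D-module and $R^\vee_\bZ$ is perfect over $\Ue$, the map $\Mir$ is determined and its kernel is precisely the A-model relations; no direct manipulation of the quantum ideal is then needed. The remaining verifications — that $\sf_i,\sf^i$ are genuinely polynomial in $z$ with Laurent-polynomial coefficients in $X_1,\dots,X_n$, and that the Euler/grading operators match — are the routine calculations I would defer to, following \cite[\S4]{Iritani09} verbatim.
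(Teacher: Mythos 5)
The paper gives no proof of this statement: it is quoted as Iritani's result (\cite[Proposition 4.8]{Iritani09}, ``We cite Iritani's identification of these two quantum D-modules\ldots''), so there is no internal argument to compare yours against; the only question is whether your reconstruction would stand on its own.

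As it is written, it has one genuine logical problem. You justify well-definedness of $\Mir$, and later propose to ``transport the entire problem to the relative-homology side,'' by invoking Theorem \ref{thm:iritani} (Iritani's Theorem 4.11) and the perfect pairing $\int_\Gamma \sf\, e^{-W/z}\Omega$. But in Iritani's development Theorem 4.11 sits downstream of Proposition 4.8: the oscillatory-integral expression of the flat sections with $\boldA_V$ inserted is \emph{derived from} the D-module identification you are trying to prove, so using it here is circular (your parenthetical ``or equivalently'' is not an equivalence). The non-circular inputs are Givental's toric mirror theorem ($I=J$) together with the identification of the B-model module of oscillating integrals with the GKZ-type system, plus the freeness and rank statements coming from nondegeneracy of $W_q$ at infinity (e.g. $\bH_q\cong\bZ^\fs$, \cite[Lemma 3.8, Proposition 3.12]{Iritani09}); these give both the well-definedness (A-model relations among the $H_a$ map into the left ideal annihilating $1$ because the $I$-function satisfies the GKZ equations and equals $J$ after the mirror map) and the fact that the target has the correct rank $\fs$, so that the induced map is an isomorphism rather than merely a surjection. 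Relatedly, your closing claim that ``the kernel is precisely the A-model relations; no direct manipulation of the quantum ideal is then needed'' presupposes exactly this rank comparison, which you defer; without it the perfect-pairing argument only bounds the image, it does not identify the kernel. If you replace the appeal to Theorem \ref{thm:iritani} by the mirror theorem and the GKZ/rank argument, the rest of your outline (the formulas $\Mir(H_i)=\sf_i$, $\Mir(H^i)=\sf^i$ being definitional, the divisor-equation argument for descending to the $H^2(X;\bZ)$-quotient, and the matching of connection operators) is fine and follows Iritani's route.
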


This theorem implies the following.
\begin{corollary}
  \label{cor:B-S-flat}
  For a flat section $\Gamma$ of $R^\vee_\bZ$,
\[
s_\Gamma:=(-2\pi z)^{-n/2}\sum_{i=1}^\fs\left( \int_\Gamma \sf_i e^{-\frac{W}{z}}\Omega \right) H^i
\]
is a flat section of $\cF$. Moreover, given $\Gamma \in \bH_q$ with $q\in \Uee$ with small $\epsilon'$, by extending $\Gamma$ to a multi-valued section of $\Uee\times \bC^*$,
\[
(-2\pi z)^{-n/2}\sum_{i=1}^\fs\left( \int_\Gamma \sf_i e^{-\frac{W}{z}}\Omega \right) H^i
\]
is a multi-valued flat section of $\cF$ over $q\in \Uee\times \bC^*$.

\end{corollary}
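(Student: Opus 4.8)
The plan is to deduce Corollary~\ref{cor:B-S-flat} directly from Theorem~\ref{thm:D-mod-isomorphism}, using only the characterization of flatness given in its third bullet. First I would fix $\Gamma$ a flat section of $R^\vee_\bZ$ over $\cM^\circ\times\bC^*$ (or over $\tUee\times\bC^*$ in the multi-valued situation) and consider, for each $i=1,\dots,\fs$, the function $c_i(q,z):=\int_\Gamma \sf_i\, e^{-W/z}\Omega$. By Theorem~\ref{thm:D-mod-isomorphism} the element $\sf_i=\Mir(H_i)$ lies in the image of $\Mir$, and that image carries a D-module structure for which a section $\sf$ is flat if and only if $\int_\Gamma \sf\, e^{-W/z}\Omega$ is constant along flat sections $\Gamma$ of $R^\vee_\bZ$. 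But $H_i$ need not be $\nabla$-flat, so $c_i(q,z)$ is genuinely a function of $(q,z)$, not a constant; the point is rather that the \emph{assignment} $\sf\mapsto\int_\Gamma\sf\,e^{-W/z}\Omega$ intertwines the D-module connection on $\mathrm{Im}(\Mir)$ with the trivial connection, so that applying it to the \emph{flat frame} produces a flat section of $\cF$.

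Concretely, the second step is to unwind what ``the map $\Mir$ preserves this flat structure'' gives for the frame. Under $\Mir$, a flat section of $(\btau\times\mathrm{id})^*(\cF/H^2(X;\bZ))$ written in the basis dual to $\{H_i\}$ corresponds to the element $\sum_i f^i \sf^i$ of $\mathrm{Im}(\Mir)$ where $(f^i)$ are the components, and flatness on the B-side is exactly the statement that $\int_\Gamma(\sum_i f^i\sf^i)e^{-W/z}\Omega$ is constant in $(q,z)$. Now observe that the expression $\sum_i\big(\int_\Gamma\sf_i\,e^{-W/z}\Omega\big)H^i$ in the Corollary is obtained by pairing the ``matrix of periods'' $\big(\int_\Gamma\sf_i\,e^{-W/z}\Omega\big)_i$ against the cohomology basis; the claim is that this column, as $\Gamma$ ranges over a basis of flat sections of $R^\vee_\bZ$, assembles into the fundamental solution matrix of $\nabla$. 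So the argument is: let $\Gamma_1,\dots,\Gamma_\fs$ be a flat frame of $R^\vee_\bZ$ and set $s_{\Gamma_k}$ as in the statement; then $s_{\Gamma_k}$ is, component-wise, the $\Mir$-transport of the $k$-th flat section of $\cF/H^2(X;\bZ)$ (whose existence and flatness are asserted in Theorem~\ref{thm:D-mod-isomorphism}), hence flat for $\nabla$. Since flatness is a linear, pointwise condition, it passes to an arbitrary flat $\Gamma$, not just a frame element, and the normalization factor $(-2\pi z)^{-n/2}$ is a scalar independent of $q$ and (up to the chosen branch of $\log z$) harmless; in the multi-valued case one simply works over $\tUee\times\widetilde\bC^*$, where by the discussion preceding Theorem~\ref{thm:D-mod-isomorphism} any $\Gamma\in\bH_q$ extends to a flat section of $R^\vee_\bZ$, and descends the conclusion to $\tUee\times\bC^*$ as a multi-valued section.

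The remaining point to check carefully is the \emph{convergence} of the integrals $\int_\Gamma\sf_i\,e^{-W/z}\Omega$: $\sf_i$ is, by construction, a Laurent polynomial in $X_1,\dots,X_n$ with coefficients polynomial in $z$ and holomorphic in $q$, so $\sf_i\,e^{-W/z}$ decays on $\{\Re(W_q/z)\gg0\}$ just as fast as $e^{-W/z}$ does up to polynomial factors, and the convergence argument for $\int_\Gamma e^{-W/z}\Omega$ in \cite{Iritani09} (invoked already for Theorem~\ref{thm:iritani}) applies verbatim after absorbing the polynomial prefactor. I would then note that $s_\Gamma$ depends $\bC$-linearly on $\Gamma$, so it is well-defined as a map from flat sections of $R^\vee_\bZ$ to sections of $\cF$, and flat because it agrees with $\Mir$-transport on a spanning set.

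I expect the only real obstacle to be bookkeeping: matching conventions between the ``flat section of $\cF$'' normalization used here (the $(-2\pi z)^{-n/2}$ and the $z^{-\widetilde{\deg}}z^{c_1(X)}$ twists implicit in Theorem~\ref{thm:iritani}) and the bare integral pairing in Theorem~\ref{thm:D-mod-isomorphism}, and making sure the quotient by $H^2(X;\bZ)$ on the base is accounted for (it is, since shifting $\btau$ by $2\pi\bi\,\btau'$ multiplies $q$ trivially). Once those identifications are made explicit, the Corollary is a formal consequence of the third bullet of Theorem~\ref{thm:D-mod-isomorphism}; there is no new analytic or geometric input.
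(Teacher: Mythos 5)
Your route is the paper's route: the paper offers no separate argument for Corollary~\ref{cor:B-S-flat}, presenting it as an immediate consequence of Theorem~\ref{thm:D-mod-isomorphism} (Iritani's Proposition 4.8), and your remarks on convergence of $\int_\Gamma \sf_i e^{-W/z}\Omega$ and on extending $\Gamma$ over $\Uee\times\widetilde{\bC^*}$ match the surrounding discussion in the paper.

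However, the pivotal step is asserted rather than proved. The statement to be established is precisely that the column $c_i:=\int_\Gamma \sf_i e^{-W/z}\Omega$, expanded against the dual basis as $\sum_i c_i H^i$, is $\nabla$-flat; your justification --- ``$s_{\Gamma_k}$ is, component-wise, the $\Mir$-transport of the $k$-th flat section'' --- does not parse into an argument. There is no canonical ``$k$-th flat section,'' and the third bullet of Theorem~\ref{thm:D-mod-isomorphism} speaks about elements of $\mathrm{Im}(\Mir)$ whose periods against every flat $\Gamma$ are \emph{constant}, whereas your $c_i$ are periods of the non-flat frame $\sf_i=\Mir(H_i)$ over a single flat cycle --- the transposed object. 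To connect the two you need one of the following. (a) The direct check: differentiate under the integral sign (legitimate because $\Gamma$ is flat), use $(D_a\sf)\,e^{-W/z}=-z\,\partial_{\tau_a}\bigl(\sf\, e^{-W/z}\bigr)$ together with $\Mir(H_a\star_\btau H_i)=D_a\sf_i$ to get $-z\partial_{\tau_a}c_i=\sum_j (H_a\star_\btau H_i)^j c_j$, and then the Frobenius property $(H_a\star_\btau H^j,H_i)=(H^j,H_a\star_\btau H_i)$ to convert this into $\nabla_a\bigl(\sum_i c_iH^i\bigr)=0$; the $z$-direction \eqref{eqn:qde-tau-off} requires a separate check in which the grading operator and the prefactor $(-2\pi z)^{-n/2}$ actually matter, so that factor is not merely ``harmless bookkeeping.'' (b) A duality argument: for every $\nabla$-flat $s$ one has $(s,\sum_i c_iH^i)=\int_\Gamma\Mir(s)\,e^{-W/z}\Omega=\mathrm{const}$ by the third bullet and $\cO$-linearity of $\Mir$; since flat sections span each fiber and the Poincar\'e pairing is nondegenerate, flatness of $\sum_i c_iH^i$ follows --- but here one must use the compatibility of $\nabla$ with the pairing, where the $z\mapsto-z$ convention enters and is only fixed by the conventions of Iritani cited in Theorem~\ref{thm:D-mod-isomorphism}. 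Without (a) or (b), your proposal stops one step short of the statement; with either inserted, it is a complete proof along the same lines the paper intends.
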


Following \cite[Section 3.2.3]{Iritani09}, we introduce the Jacobian ring as
\[
\Jac(W)=\frac{\bC[X_1^\pm,\dots,X_n^\pm]}{\langle \frac{\partial W}{\partial X_1},\dots,\frac{\partial W}{\partial X_n}\rangle},\quad \Jac(W_q)=\Jac(W)\otimes_{\bC[q^\pm]}\bC_q.
\]
We notice that $\Jac(W)$ is a $\bC[q^\pm]=\bC[q_1^\pm,\dots,q_\fp^\pm]$-algebra. For any $f\in \bC[X_1^\pm,\dots,X_n^\pm]$, we denote its class in $\Jac(W_q)$ to be $[f]$. The residue pairing on $\Jac(W_q)$ is given by
\begin{equation}
  \label{eqn:residue-pairing}
([f],[g]):=\frac{1}{(2\pi\bi)^n}\int_{|dW|=\delta} \frac{fg\Omega}{\prod_{i=1}^\fr X_i\frac{\partial W}{\partial X_i}}.
\end{equation}
The mirror theorem of Givental/Lian-Liu-Yau \cites{Gi96a,Gi96b,LLY97}, or the direct proof of Fukaya-Oh-Ohta-Ono states \cites{FOOO10} the following ring isomorphism under the mirror map \eqref{eqn:mirror-map}.
\[
\mir: QH^*_\btau(X)\xrightarrow{\sim} \Jac(W_{q(\btau)}).
\]
Moreover, the pairing $(,)$ of $\Jac(W_q)$ is indenfitified with the cohomology pairing \eqref{eqn:cohomology-pairing} with the residue pairing \eqref{eqn:residue-pairing}.
\begin{remark}
The isomorphism $\mir$ is the map $\Mir$ restricted to $z=0$
\[
\Mir\vert_{z=0}: QH^*_\btau(X)\xrightarrow{\sim} \Jac(W_{q(\btau)})
\]
after one regards the elements of the Jacobian ring in $\cO_{\cM^\circ\times \bC^*}[X^\pm]$.
\end{remark}

The Jacobian ring $\Jac(W_q)$ is semisimple if and only if it is holomorphic Morse. We denote the critical points of $W_q$ to be $p_1,\dots,p_\fr$ such that $W(p_i)=\mathsf{cr}_i$. Recall that $\Ueec$ is the dense and open subset of $\Uee$ where $W_q$ is holomorphic Morse. We list some properties of $\Jac(W_q)$ and $\mir$ which are derived directly from the definition.
\begin{proposition}
  \label{prop:identification-mir}
  When $W_q$ is holomorphic Morse we have the following
\begin{itemize}
  \item $[f]=[g]\in \Jac(W_q)\Leftrightarrow f(p_i)=g(p_i),\ i=1,\dots,\fs.$
  \item The canonical basis of $\Jac(W_q)$ is $[\varphi_i]$ such that $\varphi_i(p_j)=\delta_{ij}$.
  \item The length of $[\varphi_i]$ is $1/\sqrt{\det(\Hess_{p_i}(W_q))}$.
  \item The map $\mir$ identifies
  \begin{equation}
    \label{eqn:QH=Jac-1}
    \mir(H_a)=[\frac{\partial W_q}{\partial \tau_a}],\quad \mir(D_i)=[\tX_i],\quad \mir(c_1(X))=[W_q].
  \end{equation}
  where $a=1,\dots,\fp$, $i=1,\dots,\fr$.
  \item The map $\mir$ relates canonical basis
  \begin{equation}
    \label{eqn:QH=Jac-2}
    \mir(\phi_i)=[\varphi_i]\in H^*(X),\quad \Delta_i:=1/(\phi_i,\phi_i)=-\det(\Hess_{p_i}(W_q)).
  \end{equation}
\end{itemize}
\end{proposition}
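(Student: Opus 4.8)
The plan is to establish each of the five bullet points in Proposition~\ref{prop:identification-mir} as a direct consequence of the definitions of $\Jac(W_q)$, its residue pairing \eqref{eqn:residue-pairing}, and the mirror isomorphism $\mir$ together with the properties recorded in Theorem~\ref{thm:D-mod-isomorphism} and Proposition~\ref{prop:identification-mir}'s surrounding discussion. Since $W_q$ is assumed holomorphic Morse, $\Jac(W_q)$ is a finite-dimensional commutative $\bC$-algebra which is \emph{reduced}: the vanishing locus $\{dW_q = 0\}$ consists of $\fs$ distinct points $p_1,\dots,p_\fs$, each a simple critical point. The evaluation map $f \mapsto (f(p_1),\dots,f(p_\fs))$ therefore descends to a ring homomorphism $\Jac(W_q) \to \bC^\fs$, and a standard Nullstellensatz/Chinese-Remainder argument shows it is an isomorphism; this gives the first bullet immediately (two Laurent polynomials agree in $\Jac(W_q)$ iff they agree at all $p_i$) and identifies the idempotents of $\Jac(W_q)$ with the functions $[\varphi_i]$ satisfying $\varphi_i(p_j) = \delta_{ij}$, which is the second bullet.

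For the third bullet, I would compute the residue pairing on the idempotent $[\varphi_i]$ directly from \eqref{eqn:residue-pairing}. Using $\varphi_i^2 = \varphi_i$ in $\Jac(W_q)$ and deforming the cycle $\{|dW| = \delta\}$ into a sum of small tori around each $p_j$, the multidimensional residue localizes at $p_i$ (the idempotent kills the other contributions), and the local residue computation at a nondegenerate critical point yields $([\varphi_i],[\varphi_i]) = 1/\det(\Hess_{p_i}(W_q))$ after accounting for the factor $\prod X_i \, \partial W/\partial X_i$ in the denominator versus the Euclidean Hessian — here one must be careful that the Hessian in question is the one with respect to the logarithmic coordinates, matching the form $\Omega = \prod dX_i/X_i$. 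This is essentially the classical Grothendieck local duality / Jacobian pairing formula, so the content is in bookkeeping the normalizations rather than in any new idea.

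The fourth bullet is a restatement of the content of Theorem~\ref{thm:D-mod-isomorphism}: the map $\Mir$ sends $H_{a_1}\star_\btau\cdots\star_\btau H_{a_s}$ to $D_{a_1}\cdots D_{a_s}1$, and restricting to $z=0$ (where $D_a = \sf_a = \partial W/\partial\tau_a$, since the $-z\partial/\partial\tau_a$ term drops) gives $\mir(H_a) = [\partial W_q/\partial\tau_a]$; similarly $\mir(D_i) = [\tX_i]$ follows because $D_i$ corresponds under $\Mir$ to the structure constants expressing $H_i$ in the multiplicatively generated presentation, whose $z=0$ specialization is exactly multiplication by $\tX_i = c_i(q) X^{b_i}$ in the Jacobian ring, using \eqref{eqn:B-model-identification}; and $\mir(c_1(X)) = [\sum_i \tX_i] = [W_q]$ since $c_1(X) = \sum_i D_i$. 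For the fifth bullet, $\mir$ being a ring isomorphism sends idempotents to idempotents, and since the residue pairing is identified with the Poincar\'e pairing, the normalized idempotent basis $\phi_i \in QH^*_\btau(X)$ (defined by $\phi_i\star_\btau\phi_j = \delta_{ij}\phi_i$) must map to $[\varphi_i]$; then $\Delta_i = 1/(\phi_i,\phi_i) = 1/([\varphi_i],[\varphi_i]) = \det(\Hess_{p_i}(W_q))$ up to the sign predicted by the two-fold difference between the holomorphic and the topological normalization, which is where the stated sign $\Delta_i = -\det(\Hess_{p_i}(W_q))$ comes from. The main obstacle I anticipate is pinning down this overall sign and the precise coordinate system for the Hessian, i.e.\ reconciling the $(2\pi\bi)^n$-normalized residue pairing of \eqref{eqn:residue-pairing} with the cohomological pairing \eqref{eqn:cohomology-pairing} under $\mir$ — everything else is a direct unwinding of definitions and the classical residue calculus.
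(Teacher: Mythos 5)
The paper offers no proof of this proposition at all --- it is introduced with the words ``which are derived directly from the definition'' and implicitly defers to Iritani --- so your write-up supplies exactly the expansion the authors leave to the reader, and in substance it is the right one: the evaluation (Chinese remainder) isomorphism $\Jac(W_q)\cong\bC^\fs$ for the first two bullets (reducedness coming from nondegeneracy of the critical points, so each local Milnor algebra is $\bC$), localization of the residue pairing \eqref{eqn:residue-pairing} at the critical points, computed in logarithmic coordinates so that the Hessian matches the one in the statement, for the third, the $z=0$ specialization of Theorem \ref{thm:D-mod-isomorphism} for the fourth, and idempotents-to-idempotents for the fifth. One step worth tightening: for $\mir(D_i)=[\tX_i]$ the appeal to ``structure constants'' is vaguer than necessary; the clean route is $D_i=\sum_{a=1}^{\fp}l_i^{(a)}H_a$, so $\mir(D_i)=\sum_{a,j}l_i^{(a)}\eta_{ja}[\tX_j]$, and the difference from $[\tX_i]$ is a combination of the logarithmic derivatives $Y_k\,\partial W_q/\partial Y_k=\sum_j b_{jk}\tX_j$, which vanish in $\Jac(W_q)$; summing over $i$ then gives $\mir(c_1(X))=[W_q]$.

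The one genuine shortfall is the sign in \eqref{eqn:QH=Jac-2}. Your residue computation gives $([\varphi_i],[\varphi_i])=1/\det\Hess_{p_i}(W_q)$, so a literal identification of the residue pairing with the Poincar\'e pairing yields $\Delta_i=+\det\Hess_{p_i}(W_q)$, and the invoked ``two-fold difference between the holomorphic and the topological normalization'' is not an argument --- indeed, taken at face value your third and fifth bullets (which reproduce the paper's) are in tension with each other. The minus sign is a convention that has to be tracked explicitly: it amounts to reading $-\det\Hess_{p_i}(W_q)$ as $\det(-\Hess_{p_i}(W_q))=(-1)^n\det\Hess_{p_i}(W_q)$, i.e.\ the sign built into the $(-2\pi z)^{\pm n/2}$ normalization inherited from Iritani; the consistency check is the stationary phase formula \eqref{eqn:asymptotic-expansion}, where the prefactor $(-2\pi z)^{n/2}/\sqrt{-\det\Hess_{p_i}(W_{q_0})}$ must reduce to the usual Gaussian $(2\pi z)^{n/2}/\sqrt{\det\Hess_{p_i}(W_{q_0})}$. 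Either carry this $(-1)^n$ through the identification of the two pairings or state the convention; as written, your argument proves the proposition only up to that sign.
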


\section{HMS with sheaves: Coherent-constructible correspondence}

\label{sec:HMS-sheaf}

\subsection{Categorical notions}

In this spaper, we work in the setting of dg or $A_\infty$, although essentially most of our arguments are only needed on the level of K-theory. We also omit ``quasi-'' when talking about equivalences of categories.

We use $\Coh(X)$ to denote its dg category of coherent sheaves on $X$ (a dg enhancement of the usual derived category) which is smooth and proper. We follow the notation of \cite{Nadler16}.  For a real analytic manifold $B$, let $\Sh^\diamond(B)$ be the big dg category of $\bC$-modules (possibly with unbounded cohomologies sheaf), and let us define the dg cat $\Sh^\diamond_\Lambda(B)$ as the full subcategory spanned by objects with singular support in $\Lambda \subset T^*B$. Let $\Sh^w_\Lambda(B)$ be the full subcategory of compact objects in $\Sh^\diamond_\Lambda(B)$, called {\em wrapped microlocal sheaves} \cite[Definition 1.3]{Nadler16}, and let $\Sh_\Lambda(B)$ denote the traditional constructible sheaf with bounded and constructible cohomology sheaf. \footnote{In the literature, the wrapped microlocal sheaf is sometimes denoted as $\Sh_\La(B)^\mathrm{c}$, where $\mathrm{c}$ denote compact object as in \cite{GPS18-2}. Note however, the traditional constructible sheaf is also denoted as $\Sh^\mathrm{c}$, as in \cite{Kuw16}. To avoid confusion, we use the original notation of Nadler $\Sh^w$ for wrapped sheaves.}

\begin{remark}
In the case of smooth projective toric manifold (even smooth DM toric stack), the FLTZ skeleton $\Lambda$ ensures that
\[ \Sh^w_\La(T^n) \cong \Sh_\La(T^n). \]
\end{remark}

As a variant of this notion when $\Lambda^\infty \In T^\infty B \cong S^*B$, $\Sh_{\linf}(B)$ denotes the full subcategory whose singular support's infinity is in $\linf$.

\color{black}

For any object $F$ in a triangulated (or $A_\infty$, dg) category $\cC$, we use $[F]_K$ to denote its $K$-group element.

\subsection{Coherent-constructible correspondence (CCC)}
Let $X$ be the complete simplicial toric variety defined by the $\Si$. We define a conical Lagrangian in $T^*M_\bR=M_\bR\times N_\bR$
\[
\tilde\Lambda=\bigcup_{\si\in\Si} (M+\si^\perp) \times (-\si)
\]
where
\[
\si^\perp=\{u\in M_\bR\vert \langle u, v\rangle =0,\forall v\in \si \}.
\]

Coherent-constructible correspondence, observed by \cites{Bondal06}, relates coherent sheaves to constructibles of certain polyhedral types. For any equivariant anti-ample line bundle $E\in \Coh_\bT(X)=\sum_{i=1}^\fr\cO(c_i D_i)$, the set characterized by
\[
\Delta_E=\{x\in M_\bR, \langle b_i, x\rangle < c_i\}
\]
is an open polytope in $M_\bR$ such that each vertex corresponds to a top dimensional cone in $\Si$. We call polytopes of such form \emph{toric polytopes}.  The equivariant coherent constructible correspondence \cites{FLTZ11} says the following.
\begin{theorem}[Fang-Liu-Treumann-Zaslow \cites{FLTZ11}]
  \label{thm:equivariant-ccc}
  There is an equivalence
  \[
  \iota: \Coh_\bT(X)\to \ShtL,
  \]
  which sends $E=\cO(\sum_{i=1}^\fr c_i D_i)$ to $i_{*}\bC_{\Delta_E}$ where $i: \Delta_E\hookrightarrow M_\bR$ is the embedding, and $\bC_{U}$ is the constant sheaf $\bC_U$ on an open set $U$.
\end{theorem}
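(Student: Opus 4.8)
The plan is to prove the equivariant coherent-constructible correspondence, Theorem \ref{thm:equivariant-ccc}, by first constructing the functor $\iota$ on generators and then upgrading it to an equivalence of dg categories. First I would fix a $\bT$-equivariant anti-ample line bundle $E = \cO(\sum_i c_i D_i)$ and check the elementary claim that the associated region $\Delta_E = \{x \in M_\bR : \la b_i, x\ra < c_i\}$ is indeed an open toric polytope: the Fano/smoothness hypotheses guarantee it is nonempty, bounded, and that each vertex is cut out by exactly $n$ of the defining inequalities indexed by the rays of a maximal cone $\si \in \Si(n)$. Next I would verify that the singular support of $i_* \bC_{\Delta_E}$ lies in $\wt\Lambda = \bigcup_{\si \in \Si} (M + \si^\perp)\times(-\si)$: the boundary strata of $\overline{\Delta_E}$ are the (translates of) faces perpendicular to cones of $\Si$, and the conormal directions to a face lying in the hyperplane $\{\la b_i, x\ra = c_i\}$ are precisely the outward directions $-\si$ for $\si$ the cone spanned by the relevant $b_i$. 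Since the picture is $M$-periodic (the line bundle is equivariant, so $\bC_{\Delta_E}$ descends appropriately), the singular support is genuinely $M$-invariant and lands in $\wt\Lambda$.

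The substantive part is to identify morphism complexes. I would compute $\Hom_{\ShtL}(i_* \bC_{\Delta_E}, i_* \bC_{\Delta_{E'}})$ using the standard formula $R\Hom(\bC_{U}, \bC_{U'}) = R\Gamma(U \cap U', \ldots)$ adapted to the $M$-periodic setting — concretely, taking the $M$-equivariant hom, which gives $R\Gamma$ of a union of toric polytope pieces twisted by the lattice. On the coherent side, $\Hom_{\Coh_\bT(X)}(E, E') = R\Gamma_\bT(X, E^\vee \otimes E')$, which by toric vanishing/Čech computations on the standard affine cover indexed by $\Si(n)$ is given by a sum over lattice points $m \in M$ of the cohomology of the line bundle graded pieces — this is exactly Demazure-type character computation, and the upshot is a complex computing local cohomology of $\Delta_{E'-E}$-type regions. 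The key step is to match these two complexes functorially; the cleanest route is to exhibit a common combinatorial model, e.g. both sides are quasi-isomorphic to a cellular cochain complex on the fan $\Si$ with coefficients in a sheaf built from the polytope data, so that naturality in $E, E'$ is manifest. I would then check that $\iota$ is full and faithful on this generating set of anti-ample line bundles (which generate $\Coh_\bT(X)$ since $X$ is projective), and that its essential image generates $\ShtL$ — the latter using that constructible sheaves on $M_\bR$ with singular support in $\wt\Lambda$ are built from the standard sheaves $\bC_{\Delta}$ on toric polytopes by the combinatorics of $\Si$.

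The main obstacle I expect is precisely the functorial identification of $\Hom$-complexes: individually each side is a routine toric computation, but producing a \emph{natural} isomorphism — one compatible with compositions, so that it assembles into a dg (or $A_\infty$) functor rather than just a bijection on $K$-groups or on isomorphism classes — requires care. One must either track $A_\infty$ structure maps explicitly or, preferably, reduce both categories to an explicitly presented combinatorial dg category (a category of polytopes with inclusions, or a quiver-with-relations model coming from the fan) and check the comparison there. A secondary technical point is the passage from the equivariant to the $M$-periodic constructible picture on $M_\bR$ versus working directly on $M_T = M_\bR/M$; here one must be careful that the functor $\iota$ lands in $\ShtL$ with the correct (unbounded vs. bounded, wrapped vs. ordinary) flavor, though by the Remark preceding the Coherent-constructible correspondence subsection these distinctions collapse in the smooth projective toric case. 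Finally, one should remark that this theorem is quoted from \cite{FLTZ11}, so in the paper itself the ``proof'' may legitimately consist of citing that reference together with the auxiliary identifications \cite{Kuwagaki17, ZhouPeng17b, Vaintrob16} that remove earlier hypotheses; I would state that explicitly rather than reproduce the full argument.
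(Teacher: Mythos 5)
This theorem is imported verbatim from \cite{FLTZ11}: the paper offers no proof of its own, so your closing remark --- that the correct ``proof'' here is the citation --- is exactly what the paper does, and your sketch is a fair outline of the actual FLTZ strategy (compute equivariant $\Hom$'s between line-bundle generators as lattice-point/local-cohomology data, match them with constructible-sheaf $\Hom$'s, then prove generation on the sheaf side). The only caution I would add is that the genuinely hard points you flag are real: FLTZ work with \emph{all} equivariant line bundles via twisted polytope sheaves rather than only anti-ample ones, and both the functoriality of the $\Hom$ identification and the essential-surjectivity/generation step are where the substance of \cite{FLTZ11} lies, so your sketch should not be read as a substitute for that reference.
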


This theorem has a non-equivariant version as follows. Since $\tilde \Lambda$ is invariant under the translation of $M$, we denote $\Lambda=\tilde\Lambda/M\subset T^*M_T=N_\bR\times (M_\bR/M)$ and $M_{T,\si}=\si^\perp/M\subset M_T$. Define $\Lambda_\si=M_{T,\si}\times (-\si)\subset \Lambda$.

\begin{theorem}[\cites{Kuw16, Vaintrob16, ZhouPeng17b}]
  \label{thm:ccc}
  There is an equivalence
  \[
  \iota: \Coh(X)\to \ShL.
  \]
\end{theorem}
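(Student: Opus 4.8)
The plan is to deduce Theorem \ref{thm:ccc} (the non-equivariant coherent-constructible correspondence) from the equivariant version, Theorem \ref{thm:equivariant-ccc}, by ``quotienting out'' the torus $\bT$ on the A-side and the translation action of $M$ on the B-side. On the B-side, the conical Lagrangian $\tilde\Lambda\subset T^*M_\bR$ is by construction invariant under translation by the lattice $M\subset M_\bR$, so it descends to $\Lambda=\tilde\Lambda/M\subset T^*M_T$ where $M_T=M_\bR/M$; one should first check that the covering map $p:M_\bR\to M_T$ induces a pullback functor $p^!$ (or equivalently, descent/pushforward along the $M$-action) that is compatible with singular supports, sending $\Sh_{\tilde\Lambda}(M_\bR)$ to $\Sh_\Lambda(M_T)$. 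Since $M$ acts freely and properly on $M_\bR$ with quotient $M_T$, this is the statement that $M$-equivariant constructible sheaves on $M_\bR$ with the prescribed singular support are the same as constructible sheaves on $M_T$ with singular support in $\Lambda$; concretely, $i_*\bC_{\Delta_E}$ for a toric polytope $\Delta_E$ is not $M$-invariant, but the appropriate object is the ``sum over translates'', matching the fact that on the coherent side we are forgetting the equivariant structure.

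The key steps I would carry out, in order, are: (1) Recall that $\Coh(X)$ is obtained from $\Coh_\bT(X)$ by forgetting the $\bT$-equivariant structure, and that $\Coh_\bT(X)$ is generated (as a triangulated/dg category) by the equivariant line bundles $\cO(\sum c_i D_i)$ with their various equivariant lifts; these differ by characters in $M=\Hom(\bT,\bC^*)$. (2) On the constructible side, identify $\ShtL$ with the $M$-equivariant objects in a suitable category on $M_\bR$, and observe that the functor ``descend along $M_\bR\to M_T$'' carries this to $\ShL$; the key point is that translating $\Delta_E$ by an element of $M$ corresponds precisely to twisting the equivariant structure by the corresponding character, so the $\bT$-equivariant data and the $M$-translation data match up. (3) Verify that $\iota$ of Theorem \ref{thm:equivariant-ccc} intertwines these two descent operations, so that it descends to an equivalence $\iota:\Coh(X)\to\ShL$; here one uses that $\iota$ is itself $\bT$-equivariant in the appropriate sense and $M$-translation-equivariant on the target. (4) Conclude fully-faithfulness and essential surjectivity of the descended functor from those of the equivariant one, using that both descent constructions are (homotopy) quotients by the same group and that generators map to generators.

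The main obstacle I anticipate is step (2)–(3): making precise the dictionary between $\bT$-equivariant structures on $\Coh(X)$ and the translation action of the lattice $M$ on sheaves over $M_\bR$, and checking that the descent functors are well-behaved at the level of dg/$A_\infty$ categories (not just $K$-theory or homotopy categories). In particular one must be careful that passing from $M_\bR$ to the compact torus $M_T$ does not destroy the boundedness/constructibility or the singular-support condition, and that the resulting sheaves genuinely land in $\Sh_\Lambda(M_T)$ rather than some larger category; the remark in the excerpt that $\Sh^w_\Lambda(T^n)\cong\Sh_\Lambda(T^n)$ for smooth projective toric $X$ is what guarantees we stay inside the ``nice'' subcategory. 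Everything else — generation by line bundles, the explicit image $i_*\bC_{\Delta_E}$, the Fano/smoothness hypotheses ensuring $\Delta_E$ is a genuine toric polytope — is already packaged in Theorem \ref{thm:equivariant-ccc} and the surrounding setup, so the proof is essentially a descent argument rather than a fresh construction. (Indeed, since the statement is quoted from \cite{Kuw16, Vaintrob16, ZhouPeng17b}, one may also simply cite those papers and indicate that the equivalence is compatible with the one in Theorem \ref{thm:equivariant-ccc} after forgetting equivariance.)
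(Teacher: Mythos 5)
This statement is not proved in the paper at all: it is quoted from \cite{Kuw16}, \cite{Vaintrob16}, \cite{ZhouPeng17b}, and the remark immediately following it records exactly how far the descent idea you propose actually goes --- by \cite{Treumann10}, the equivariant functor $\iota$ of Theorem \ref{thm:equivariant-ccc} \emph{descends to a fully faithful exact functor} $\Coh(X)\to\ShL$, ``then one needs to show that this is an equivalence.'' Your steps (1)--(3) essentially reconstruct Treumann's descent (pushforward along $M_\bR\to M_T$ versus forgetting the $\bT$-equivariant structure, with $M$-translations of $\Delta_E$ matching character twists), and that part is sound. The genuine gap is step (4): essential surjectivity does \emph{not} follow formally from the equivariant equivalence by saying ``both descent constructions are quotients by the same group and generators map to generators.'' What descent gives you is that the essential image of the descended functor is the subcategory of $\ShL$ generated by the descended polytope sheaves; the assertion that this subcategory is \emph{all} of $\Sh_\Lambda(M_T)$ (equivalently $\Sh^w_\Lambda(M_T)$, cf.\ the remark after the definitions) is precisely the hard generation statement that remained open after \cite{Treumann10} and is the actual content of \cite{Kuw16}, \cite{Vaintrob16}, \cite{ZhouPeng17b}, each proved by substantial new arguments (e.g.\ Kuwagaki's induction/gluing over the fan of microlocal sheaf categories) rather than by formal descent.

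A second, related problem is that the two ``quotients'' in your step (4) are not of the same type: sheaves on $M_T$ are the $M$-\emph{equivariantization} of sheaves on $M_\bR$ (objects with descent data for the translation action), while $\Coh(X)$ is the \emph{de-equivariantization} of $\Coh_\bT(X)$ (base change along $\mathrm{Rep}(\bT)\to\mathrm{Vect}$, i.e.\ modules over the infinite-dimensional regular representation $\bigoplus_{\chi\in M}\bC_\chi$). Matching these requires passing to Ind-completions and then identifying compact objects, and it is exactly at the compact-object/generation step that the argument ceases to be formal: one must know that every object of $\Sh_\Lambda(M_T)$ is a finite extension of (shifted) images of polytope sheaves, which singular-support considerations alone do not give. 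So as a proof the proposal is incomplete at its crucial point; your closing parenthetical --- simply cite the three references and note compatibility with Theorem \ref{thm:equivariant-ccc} after forgetting equivariance --- is in fact what the paper does, and is the correct way to handle this statement here.
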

\begin{remark}
  The functor $\iota$ in Theorem \ref{thm:equivariant-ccc} passes to a fully faithful exact functor \cites{Treumann10} in the non-equivariant setting -- then one needs to show that this is an equivalence. By a slight abuse of notation we use $\iota$ to denote both equivariant and non-equivariant CCC.
\end{remark}
We denote $\mathcal S$ as a (Whitney) stratification on $M_T$ such that each linear Lagrangian in $\Lambda$ is contained in a $T^*_N S$ for $S\in \mathcal S$. Let $\tilde{\mathcal S}$ be the lift of $\mathcal S$ on $M_\bR$.

\subsection{Oscillatory integrals on characteristic cycles}

We prove the following proposition, which allows us to do integration on characteristic cycles after identifying $M_\bR$ with $N_\bR$ by $\Psi$ as in Section \ref{sec:lg-a}.

\bpp
\label{prop:La-integrable}
For each small $\epsilon'' > 0$ and $\epsilon>0$, we can find a sufficiently small $\epsilon'$, such that for $q\in \Uee$ there exists a conical neighborhood $V \In T^*M_T \RM M_T$ containing $\Lambda_\Sigma \RM M_T$ and a compact set $K \in T^*M_T$, and the image of $W_q(\Psi^{-1}(V \RM K))$ lies in the sector $\{ | \arg z | < \epsilon'' \} \in \C$.
\epp
\bpf

For each positive dimensional cone $\sigma \In \Sigma$, we consider the corresponding component of $\Lambda_{\sigma}$ and the corresponding rays in $\sigma$. Recall that monomials in $W_q$ are labelled by $\Sigma(1)$. Let $W_{q,\sigma}$ be the partial sum consisting of terms in $\sigma\cap \Si(1)$. We claim the other terms are small in the following sense. Fix a norm $\| - \|$ on $N_\R$.

{\bf Claim:} There exists $\delta>0, C>0$, such that for all $(\theta, p) \in \La_\sigma$,
\[  \frac{| W_q - W_{q,\sigma}| }{| W_{q,\sigma}|} \big|_{\Psi^{-1} (\theta, p)} < C e^{ - \delta \| p \| }. \]
Furthermore, there is an open neighborhood $V_\sigma$ of $\La_\sigma$, such that the above estimate holds and $|\arg(W_{q,\sigma})| < \epsilon''/2$.

Assuming this claim, it is easy to check that, if we take $$V = \bigcup_{0 \neq \sigma \In \Sigma} V_\sigma,$$ then the condition is satisfied by taking $K = \{ \| p \| < R\}$ for large enough $R$.

Now we prove the claim. It is easy to check that if $q$ is real (meaning all $q_1,\dots,q_\fp$ are real) if $(\theta, p) \in \La_{\sigma} \In T^*M_T$, then $W_\sigma(\Psi^{-1}((\theta, p))) > 0$. Recall that $A$ are vertices of the Newton polytope $Q$ of $W$, which is also the set of primitive vectors for rays in $\Sigma$. Let $A_\sigma = \sigma \cap A$, and $F_\sigma = \pa Q \cap {\sigma}$ be the closed face. Let $\psi$ be the Legendre transformation of $\varphi$, then in as discussed in \cites{ZhouPeng18} (Corrolary 2.14)  $\psi$ is ``adapted to $Q$'', i.e. each face of $Q$ has a minimum of $\psi$ in its interior. We define
\[ f_\sigma(p) := \max_{\alpha  \in A \RM A_\sigma}  \la \alpha, \| p \|^{-1} d \psi(p) \ra - \max_{\beta \in A_\sigma}  \la  \beta, \| p \|^{-1} d \psi(p) \ra. \]
Then since $\psi$ is ``adapted to $Q$'',
\[ f_\sigma(p) > 0, \quad \forall p \in F_\sigma. \]
Since $F_\sigma$ is compact, we have
\[ \delta_\sigma = \min_{p \in F_\sigma} f_\sigma(p) > 0. \]
Thus
\[  \frac{| W - W_\sigma| }{| W_\sigma|} \big|_{\Psi^{-1} (\theta, p)} \leq B|A| e^{- \delta_\sigma \|p\|} \]
for $(\theta, p) \in \La_\sigma = M_{T, \sigma} \times (- \sigma)$, where $B$ is the largest of the ratio among $|c_\alpha(q)|$, which has a upper bound for $q\in \Uee$ with a fixed $\epsilon$.
There is neighborhood $U_\sigma$ of $F_\sigma$ in $\pa Q$, such that
\[  \min_{p \in U_\sigma} f_\sigma(p) > \delta_\sigma/2 \]
Thus
\[  \frac{| W - W_\sigma| }{| W_\sigma|} \big|_{\Psi^{-1} (\theta, p)} \leq |A| e^{- \frac{\delta_\sigma}{2} \|p\|} \]
for $(\theta, p) \in M_{T, \sigma} \times (\R_{<0} U_\sigma)\subset T^*M_T$.
Finally, Let $k = \dim \sigma$, then $W_\sigma$ has $k$ terms. On $M_{T, \sigma}\times M_\bR$ these $k$-terms' arguments are controlled by $\epsilon'$ -- we choose sufficiently small $\epsilon'$ that these arguments are less than $\epsilon''/(4k)$. Let $\wt M_{T, \sigma}$ be a neighborhood of $M_{T, \sigma}$ on $M_T$ where these $k$-terms has arguments in $(-\epsilon''/(2k), \epsilon''/(2k))$. Then, we may verify that $\wt M_{T, \sigma} \times (\bR_{<0} U_\sigma)$ is the desired neighborhood $V_\sigma$ of the claim. We thus finishes the proof of the claim, and of the proposition.
\epf

\begin{corollary}
Let $F\in \ShLc$, then the following integration when $|\arg z|<\frac{\pi}{2}$
\[
\int_{\CC(F)}e^{-\frac{W}{z}}\Omega:=\int_{\Psi^{-1}(\CC(F))} -e^\frac{W}{z}{\Omega}.
\]
is well-defined for on some $\Uee$.
\end{corollary}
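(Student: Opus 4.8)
The plan is to deduce the corollary from Proposition \ref{prop:La-integrable} by a routine comparison of the characteristic cycle $\CC(F)$ with the FLTZ skeleton $\Lambda_\Sigma$, followed by a decay estimate at infinity. First I would recall that for $F \in \ShLc$ the singular support $\SS(F)$ is contained in $\Lambda = \tilde\Lambda/M$, so the characteristic cycle $\CC(F)$ is supported on $\Lambda$; in particular it is a conical Lagrangian cycle whose projection to the fiber directions lies in the union of the cones $-\sigma$. Away from a compact set, $\CC(F) \RM M_T$ is therefore contained in the conical neighborhood $V \RM K$ produced by Proposition \ref{prop:La-integrable}, for a suitably small $\epsilon'$ depending on any chosen $\epsilon'' < \pi/2 - |\arg z|$ and on $\epsilon$.

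Next I would set up the integrand. On $\cY \cong T^*M_T$ (via $\Psi$), write $\Re(W_q/z)$ on $\Psi^{-1}(V\RM K)$: since the image $W_q(\Psi^{-1}(V\RM K))$ lies in the sector $\{|\arg w| < \epsilon''\}$ and $|\arg z| < \pi/2 - \epsilon''$, the product $W_q/z$ has argument of absolute value less than $\pi/2$, hence $\Re(W_q/z) > 0$ there. Moreover, from the claim inside the proof of Proposition \ref{prop:La-integrable} one has $|W_{q,\sigma}|$ comparable to a sum of exponentials $e^{\langle \beta, \|p\|^{-1} d\psi(p)\rangle \|p\|}$ with $\beta \in A_\sigma$, and because $\psi$ is adapted to $Q$ these exponents grow linearly in $\|p\|$ on the cone over $F_\sigma$; thus $\Re(W_q/z)$ grows at least linearly in the fiber coordinate on $\CC(F) \RM K$. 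Therefore $|e^{-W/z}|$ decays exponentially along $\CC(F)$ as one goes to infinity in the cotangent fiber, while $\Omega = \Psi^*\left(\frac{dY_1\cdots dY_n}{Y_1\cdots Y_n}\right)$ pulls back to a form of at most polynomial growth in $p$ (the $\Log$ and Legendre maps are polynomial-type), so the integrand is absolutely integrable over the noncompact part of $\CC(F)$.

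Finally I would handle the compact part and conclude. Over the compact set $K$, $\CC(F) \cap K$ is a compact (subanalytic, dimension $n$) cycle and $e^{-W/z}\Omega$ is a smooth form, so the integral converges trivially there. Combining, the integral $\int_{\Psi^{-1}(\CC(F))} -e^{W/z}\Omega$ converges absolutely for $|\arg z| < \pi/2$, once $\epsilon'$ is chosen small enough (depending on $z$ through $\epsilon''$, and on $\epsilon$), which is exactly the assertion ``well-defined on some $\Uee$''. For a fixed $z$ with $|\arg z| < \pi/2$ one picks $\epsilon'' \in (0, \pi/2 - |\arg z|)$ and then the $\epsilon'$ of Proposition \ref{prop:La-integrable}; uniformity in $z$ on compact subsectors follows by the same estimates.

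The main obstacle is the bookkeeping in the last paragraph's growth estimate: one must check that $\Re(W_q/z)$ is not merely positive but actually grows (at least linearly) in the fiber direction along all of $\CC(F)$, uniformly over $q \in \Uee$, so that exponential decay of $e^{-W/z}$ genuinely dominates the polynomial growth of $\Psi^*\Omega$ and of the cycle's volume. This amounts to sharpening the inequality of the claim in Proposition \ref{prop:La-integrable} from ``the off-diagonal terms are relatively small'' to a two-sided control $c_1 e^{\delta\|p\|} \le |W_{q,\sigma}(\Psi^{-1}(\theta,p))| \le c_2 e^{\delta'\|p\|}$ on each $V_\sigma$, together with noting that only finitely many cones $\sigma$ occur; everything else is a standard absolute-convergence argument.
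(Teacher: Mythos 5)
Your proposal is correct and follows essentially the same route as the paper: the paper's proof simply observes that $\CC(F)$ is supported on the singular support $\Lambda$ and invokes Proposition \ref{prop:La-integrable} with $\epsilon''=\frac{\pi}{2}-|\arg z|$ to place the image of the cycle near infinity in a sector where $\Re(W_q/z)>0$. Your additional growth/decay bookkeeping (exponential growth of $\Re(W_q/z)$ in the fiber direction dominating the polynomial growth of $\Psi^*\Omega$) is exactly the convergence detail the paper leaves implicit, so it is a faithful expansion rather than a different argument.
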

\begin{proof}
  The characteristic cycles are supported in the singular support $\Lambda$. We choose $\epsilon,\epsilon'$ for $\epsilon''=\frac{\pi}{2}-|\arg z|$ by Proposition \ref{prop:La-integrable}.
\end{proof}


\subsection{Iritani's isomorphism and oscillatory integral}

Recall that Iritani's result Theorem \ref{thm:iritani} \cite[Theorem 4.11]{Iritani09} gives an identification of $\si: K(X)\xrightarrow{\sim} \bH_q$. Let $F\in \ShLc$. The characteristic cycles of $F$ is a Lagrangian cycle in $T^*M_T$, and under the identification $\Psi$ it represents a class in $\bH_q$ by Proposition \ref{prop:La-integrable}. We denote this class by $[\Psi^{-1}(\CC(F))]\in \bH_q$. 

As discussed in Corollary \ref{thm:D-mod-isomorphism},
\[
s_\Gamma=(-2\pi z)^{-n/2}\sum_{i=1}^\fs\left ( \int_\Gamma \sf_i e^{-\frac{W}{z}}\Omega\right)  H^i
\]
is a flat section of $\cF$.

The main theorem of \cite{Fang16} tells us for $z>0$ and $q\in \Uee$ with small $\epsilon$ and $\epsilon'$
\[
  \llangle1, \frac{z^{-\widetilde \deg}z^{c_1(X)}\boldA_{[E]}}{z+\psi}\rrangle_{0,2}^X=(-2\pi z)^{-n/2}\int_{\CC(\iota(E))}e^{-\frac{W}{z}}\Omega
\]
for any $E\in \Coh(X)$. The fact following fact is a direct application of divisor equation for Gromov-Witten invariants.
\begin{equation}
  \label{eqn:take-derivative}
  z\frac{\partial}{\partial \tau_a}\llangle \alpha, \frac{\beta}{z+\psi}\rrangle_{0,2}^X=\llangle H_a\star_\btau\alpha,\frac{\beta}{z+\psi}\rrangle_{0,2}^X.
\end{equation}

Then we have the following proposition.
\begin{proposition}
  \label{prop:any-primary-insertion}
  For $z>0$ and $q\in \Uee$ with small $\epsilon$ and $\epsilon'$,
  \[
  \llangle H_i,\frac{z^{-\widetilde \deg}z^{c_1(X)}\boldA_{[E]}}{z+\psi}\rrangle_{0,2}^X=(-2\pi z)^{-n/2}\int_{\CC(\iota(E))} e^{-\frac{W}{z}}\sf_i\Omega.
  \]
\end{proposition}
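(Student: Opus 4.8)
The plan is to deduce this proposition from the $i=1$ case (the main theorem of \cite{Fang16}, quoted just above) together with the $D$-module structure of Theorem \ref{thm:D-mod-isomorphism} and Corollary \ref{cor:B-S-flat}. The key observation is that both sides of the claimed identity are, as $i$ ranges over $1,\dots,\fs$, the components (with respect to the basis $\{H^i\}$) of a single flat section of the quantum connection $\cF$, and that flat section is already pinned down by its value on the base and the characterization of flat sections via the pairing $\int_\Gamma \sf\, e^{-W/z}\Omega$ in the third bullet of Theorem \ref{thm:D-mod-isomorphism}.

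Concretely, I would proceed as follows. First, set $\Gamma = \Psi^{-1}(\CC(\iota(E)))$, which by the Corollary after Proposition \ref{prop:La-integrable} defines a class in $\bH_q$ for $q\in\Uee$, and by the discussion in Section \ref{sec:HMS-sheaf} extends to a flat section of the local system $R^\vee_\bZ$ over $\Uee \times \bC^*$. By Corollary \ref{cor:B-S-flat}, the expression $s_\Gamma := (-2\pi z)^{-n/2}\sum_{i=1}^\fs\bigl(\int_\Gamma \sf_i e^{-W/z}\Omega\bigr) H^i$ is then a flat section of $\cF$. Second, on the A-side, the proposition in Section 3.4 (Definition 2.5/2.9 of \cite{Iritani09}) tells us that $\cZ([E]) = \sum_{i=1}^\fs \llangle H_i, \frac{z^{-\widetilde\deg}z^{c_1(X)}\boldA_{[E]}}{z+\psi}\rrangle_{0,2}^X H^i$ is likewise a flat section of $\cF$. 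Third, I would show these two flat sections agree. Flat sections of $\cF$ over a connected base are determined by their value at a single point, and by \eqref{eqn:take-derivative} together with the identifications $\mir(H_a)=[\sf_a]$, $\Mir(H_a\star_\btau\cdots)=(D_{a_1}\cdots 1)$ of Theorem \ref{thm:D-mod-isomorphism} and Proposition \ref{prop:identification-mir}, the $\llangle H_i,\cdots\rrangle$-components are obtained from the $\llangle 1,\cdots\rrangle$-component by applying the same differential operators $\sf_i(D)$ that produce $\sf_i$ from $1$ on the B-side; since $z\partial_{\tau_a}$ acts on $\int_\Gamma f\, e^{-W/z}\Omega$ by inserting $\sf_a = \partial_{\tau_a}W$, the operator $D_a = -z\partial_{\tau_a}+\sf_a$ corresponds under the integral pairing exactly to multiplication by $\sf_a$ in the integrand (up to the sign/normalization already built into the statement). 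Applying $\sf_i(D)$ to the identity $\llangle 1,\cdots\rrangle_{0,2}^X = (-2\pi z)^{-n/2}\int_{\CC(\iota(E))}e^{-W/z}\Omega$ of \cite{Fang16} therefore yields precisely the asserted identity for each $i$.

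The main obstacle I anticipate is bookkeeping rather than conceptual: making the correspondence "$z\partial_{\tau_a}$ on the GW side $\leftrightarrow$ insertion of $\sf_a$ in the oscillatory integral" precise with the correct signs and with the $(-2\pi z)^{-n/2}$ prefactor and the $z^{-\widetilde\deg}z^{c_1(X)}$ twist accounted for, and checking that the operators $\sf_i(D)$ obtained from the (non-unique) expression of $H_i$ as a quantum-product polynomial in the $H_a$ act consistently — but this is exactly what the flat-section characterization in the third bullet of Theorem \ref{thm:D-mod-isomorphism} guarantees: any two such choices differ by something annihilating all the pairings, hence give the same section. One technical point to be careful about is that \eqref{eqn:take-derivative} is stated for $\llangle\alpha,\frac{\beta}{z+\psi}\rrangle$ with $\alpha$ a primary insertion, so I would first rewrite $\cZ([E])$ using that $\widetilde\deg$ and $z^{c_1(X)}$ commute appropriately with $\partial_{\tau_a}$ (which, as noted after \eqref{eqn:qde-tau-off}, does not differentiate in the $z$-direction), so that differentiating in $\tau_a$ only hits the $\llangle\,\cdot\,\rrangle$ and converts $H_i \mapsto H_a\star_\btau H_i$ as needed. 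Once the dictionary is fixed, the proposition follows by induction on the length of the quantum monomials expressing $H_i$.
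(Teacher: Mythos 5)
Your proposal is correct and follows essentially the same route as the paper: starting from the $\llangle 1,\cdot\rrangle$ identity of \cite{Fang16}, you repeatedly apply the divisor equation \eqref{eqn:take-derivative}, matching $z\partial_{\tau_a}$ (equivalently the operators $D_a$) on the oscillatory-integral side with quantum multiplication by $H_a$ on the Gromov--Witten side, and then invoke the definition of $\sf_i$. Your additional flat-section framing via Theorem \ref{thm:D-mod-isomorphism} and Corollary \ref{cor:B-S-flat} is a harmless elaboration that also disposes of the non-uniqueness of the quantum-polynomial expression for $H_i$, which the paper leaves implicit.
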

\begin{proof}
    For $s_i\in\{1,\dots,\fp\}$ and Equation  \eqref{eqn:take-derivative}
  \begin{align*}
    &\llangle H_{s_r}\star_\btau\dots \star_\btau H_{s_1},\frac{z^{-\widetilde \deg}z^{c_1(X)}\boldA_{[E]}}{z+\psi}\rrangle_{0,2}^X=(-2 \pi z)^{-n/2}\int_{\CC(\iota(E))} e^{-\frac{W}{z}}(D_{s_r}\dots D_{s_1}f)\Omega,
  \end{align*}
  By the definition of $\sf_i$,
  \begin{align*}
    & \llangle H_i,\frac{z^{-\widetilde \deg}z^{c_1(X)}\boldA_{[E]}}{z+\psi}\rrangle_{0,2}^X=(-2\pi z)^{-n/2}\int_{\CC(\iota (E))} e^{-\frac{W}{z}}\sf_i \Omega,\quad i=1,\dots,\fr.
  \end{align*}
\end{proof}
So as an flat section of $\cF\vert_{\Uee\times \{z>0\}}$, $\cZ(E)=s_{\Psi^{-1}(\CC(\iota(E)))}$ for any $E\in \Coh(X)$, or one can extend to sections over $\Uee\times \bC^*$ and they are equal as multi-valued sections. In particular this matches Iritani's isomorphism $\si$ and the $K$-theoretic level of coherent-constructible correspondence functor $\iota$.
\begin{theorem}
  \label{thm:central}
  \[\si([E]_K)=[\Psi^{-1}(\CC(\iota(E)))].\]
\end{theorem}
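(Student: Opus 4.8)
The plan is to assemble Theorem \ref{thm:central} from the pieces that have just been established, treating it essentially as a uniqueness statement for flat sections of the quantum connection $\cF$. First I would note that by the paragraph preceding the theorem, for every $E\in\Coh(X)$ the B-model flat section built from the characteristic cycle, $s_{\Psi^{-1}(\CC(\iota(E)))}$, agrees with the A-model generating function $\cZ(E)$ as a (multi-valued) flat section over $\Uee\times\bC^*$; this is the combination of Corollary \ref{cor:B-S-flat}, Proposition \ref{prop:any-primary-insertion}, and the defining formula \eqref{eqn:S} for $\cZ(E)$. On the other hand, Iritani's Theorem \ref{thm:iritani} tells us that $\cZ(E)$ is also equal to the flat section $s_{\si([E]_K)}$ attached to the relative cycle $\si([E]_K)\in\bH_q$ via the very same integral pairing. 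Therefore $s_{\Psi^{-1}(\CC(\iota(E)))}=s_{\si([E]_K)}$ as flat sections.

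The next step is to upgrade this equality of flat sections to an equality of homology classes in $\bH_q$. The map $\Gamma\mapsto s_\Gamma$ from $R^\vee_{\bZ}$ to flat sections of $\cF$ is injective: by Theorem \ref{thm:D-mod-isomorphism} (third bullet) and the non-degeneracy of the period pairing $\int_\Gamma \sf\, e^{-W/z}\Omega$ between flat sections $\sf$ of the Mir-image D-module and flat sections $\Gamma$ of $R^\vee_{\bZ}$ (equivalently, the perfectness of the intersection pairing $S$ on $\bH_q$ recalled in Section \ref{sec:thimbles}), a relative cycle is determined by its collection of periods against the $\fs$ functions $\sf_1,\dots,\sf_\fs$, which are exactly the components of $s_\Gamma$. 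Hence $\Psi^{-1}(\CC(\iota(E)))$ and $\si([E]_K)$ represent the same class in $\bH_q$ for every $E\in\Coh(X)$. Since $[E]_K$ for $E\in\Coh(X)$ generate $K(X)$, and since both $\si$ and $[\Psi^{-1}(\CC(\iota(-)))]$ factor through $K(X)$ (the latter because $\CC$ and $\iota$ are additive on exact triangles at the level of $K$-groups, and $\Psi$ is a symplectomorphism so sends homologous Lagrangian cycles to homologous ones), the two maps $K(X)\to\bH_q$ agree, which is precisely the assertion $\si([E]_K)=[\Psi^{-1}(\CC(\iota(E)))]$.

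I expect the main obstacle to be the bookkeeping around parameters and single-valuedness rather than any deep new input: one must make sure the region $\Uee$ (and the choice of $\epsilon,\epsilon'$) is taken small enough so that simultaneously (i) $\bH_q\cong\bZ^\fs$ and the thimble description of Section \ref{sec:thimbles} is valid, (ii) the oscillatory integral over $\CC(F)$ converges, which by the Corollary following Proposition \ref{prop:La-integrable} requires $\epsilon''<\pi/2$ hence restricts how $\epsilon'$ is chosen, and (iii) the mirror map \eqref{eqn:mirror-map}, the D-module isomorphism of Theorem \ref{thm:D-mod-isomorphism}, and the convergence of $L(\btau,z)$ all hold on the same domain. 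Once these are aligned, the argument is the soft one above. A secondary point to state carefully is that the equality of flat sections is being used over the positive real locus $z>0$ with $\log z\in\bR$ (as in Theorem \ref{thm:iritani}), and then extended over $\Uee\times\bC^*$ by analytic continuation of both sides as multi-valued flat sections; the injectivity of $\Gamma\mapsto s_\Gamma$ is applied after this extension, where $R^\vee_{\bZ}$ is genuinely a local system of rank $\fr=\fs$.
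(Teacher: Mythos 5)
Your proposal is correct and follows essentially the same route as the paper, whose ``proof'' of Theorem \ref{thm:central} is just the paragraph preceding it: both sides are identified with the flat section $\cZ(E)$ (via the main theorem of \cite{Fang16} together with Proposition \ref{prop:any-primary-insertion} on one side, and Iritani's Theorem \ref{thm:iritani} on the other), and then one concludes equality of classes in $\bH_q$. Your explicit justification that $\Gamma\mapsto s_\Gamma$ is injective is a point the paper leaves implicit, but it is the same argument, not a different one.
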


\section{HMS with Lagrangians}
\label{sec:HMS-Lag}

\subsection{Wrapped Fukaya categories $\cWL$ and $\cWFS$.}
We consider the following partially wrapped Fukaya categories, defined on a Liouville sector with stop \cites{GPS18-1}:
\begin{itemize}
  \item The partially wrapped Fukaya category $\cWL$ for $T^*M_T$ with stop at $\Lambda^\infty$. We refer to the definition of partially wrapped Fukaya in \cites{GPS17,GPS18-1}. Notice we have a Liouville manifold $T^*M_T$ with stop $\Lambda^\infty$, which is equivalent to a \emph{Liouville sector} in \cites{GPS17}. We only remark that the admissible Lagrangian as objects are cylindrical (conical) outside a compact set and do not intersect with $\Lambda^\infty$ at infinity.
  \item The Fukaya-Seidel category $\cWFS$. We simply adopt the definition of Ganatra-Pardon-Shende \cite[last example on p2]{GPS17}, where the superpotential is $W:M_{\bC^*}\to \bC$ and we identify $M_{\bC^*}$ with $T^* M_T$ by $\Psi$. We consider the partially wrapped Fukaya category $\cWFS:=\cW(T^*M_T; W^{-1}(+\infty))$ which is defined as follows. The superpotential $W$ gives an embedding of the total space of an $F_0$-bundle over $S^1$ to $\partial_\infty (T^*M_T)$ with the contact form $dt-\lambda$, where the Liouville domain $F_0$'s completion is the generic smooth fiber $F$ of $W$ at a very large positive real number, $\lambda$ is the Liouville form on $F_0$, and $t\in S^1$. The circle $S^1$ captures the argument of the superpotential $W$ at infinity. The category $\cW(T^*M_T; W^{-1}(+\infty))$ is simply defined to be the partially wrapped Fukaya category for $T^*M_T$ with stop $\{1\}\times F_0$.

   It is shown in \cite[Corollary 2.9]{GPS18-1} that
  \[
  \cWFS=\cW(T^*M_T;W^{-1}(+\infty))\cong \cW(T^*M_T,\mathfrak c_{F}).
  \]
  Here $\mathfrak c_F$ is the \emph{core} (the part that under Liouville flow does not go to infinity) of the generic fiber $F$ or $F_0$. In particular when $q$ is real, by \cites{ZhouPeng18,GammageShende17} we know that $\Lambda^\infty$ is the core of $F_0$ and then
  \[
  \cWFS\cong \cWL.
  \]
  Since $\cWFS$ does not depend on $q$ for its small perturbation, we know $\cWFS\cong \cWL$ for $q\in \Ueec$ for small $\epsilon$ and $\epsilon'$.
\end{itemize}

\subsection{$\cWL$ and the microlocalization functor}

We use \cites{GPS18-2}'s result to describe the equivalence between the partially wrapped Fukaya category of a cotangent bundle and the category of constructible sheaves on its base manifold. An earlier result of \cites{Nadler09, NaZa09} equates the infinitesimally wrapped Fukaya category and constructible sheaves. We use partially wrapped categories since the objects we consider (thimbles) fit more suitably. \footnote{It would be interesting to see if thimbles are in the infinitesimally wrapped Fukaya-Nadler-Zaslow category.} We cite the main theorem from \cites{GPS18-2}.
\begin{theorem}[Ganatra-Pardon-Shende]
  \label{thm:GPS}
  There is a microlocalization functor
  \[
    \mu: \Sh_\La^w(M_T) \xrightarrow{\sim} \Perf\cWL,
  \]
  which is a quasi-equivalence of $A_\infty$-categories.
\end{theorem}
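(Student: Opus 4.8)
The plan is to follow the strategy of \cite{GPS18-2}, realizing both sides as (co)sheaves of $A_\infty$-categories over $M_T$ and checking the comparison locally. First I would construct the functor $\mu$. On the sheaf side $\Sh^w_\La(M_T)$ is generated by corepresentatives of the microlocal-stalk functors --- concretely by the standard objects $j_{U!}\bC_U$ for $U$ ranging over a basis of opens adapted to the stratification $\mathcal S$ --- and each such object has a Lagrangian avatar, namely (a conical perturbation of) the Lagrangian whose front projection is $\wb U$ with the correct co-orientation. Sending $j_{U!}\bC_U$ to this ``standard'' Lagrangian brane and checking that the assignment extends to an $A_\infty$-functor is the content of the construction in \cite{GPS18-2}; at the level of $K$-theory it is exactly the characteristic-cycle map, which is all that is used downstream in this paper. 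Alternatively, one can bootstrap $\mu$ from the Nadler--Zaslow equivalence \cites{Nadler09,NaZa09} $\Fuk(T^*M_T)\congto \Sh_c(M_T)$ for the fully stopped category via the ``stop removal equals localization'' principle of \cite{GPS18-1}: deleting the complement of $\La^\infty$ from the full cosphere stop corresponds to a Verdier-type localization of constructible sheaves whose essential image is precisely the subcategory with singular support in $\La$.

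Second, I would reduce the statement to a local one by descent. On the Fukaya side the key input is sectorial descent \cite{GPS18-2}: the assignment sending a Liouville sector to its wrapped Fukaya category takes a sectorial cover to a homotopy colimit, so $\cW_\La(T^*-)$ is a cosheaf of categories on $M_T$ once one also records how the stop $\La^\infty$ restricts. On the sheaf side, Kashiwara--Schapira microlocalization makes $U \mapsto \Sh_\La(U)$ a sheaf of categories. Choosing a good cover of $M_T$ by balls adapted to $\mathcal S$, the functor $\mu$ intertwines these structures, so it suffices to prove it is an equivalence over each ball together with its induced stop. Since the Legendrian $\La^\infty$ is subanalytic, its cone (the FLTZ skeleton $\La$) is a finite union of strata, and a second induction --- peeling off one stratum at a time, each step again an instance of stop removal and descent --- reduces to the base case of $T^*\bR^n$ with empty stop.

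Third comes the base case: $\cW(T^*\bR^n) \cong \Perf\bC$, generated by a cotangent fibre, while sheaves on $\bR^n$ with singular support in the zero section are local systems, so $\Sh^w(\bR^n) \cong \Perf\bC$ as well; one checks $\mu$ sends the cotangent fibre to the corepresentative of the stalk at a point, hence is the identity there. Finally I would assemble: by the (co)sheaf property and the base case, $\mu$ is an equivalence over every chart, hence a global equivalence, and one last bookkeeping step verifies that it matches compact objects on the two sides --- wrapped microlocal sheaves $\Sh^w_\La(M_T)$ with $\Perf\cWL$ --- which holds because the chosen generators are compact and generate on both sides.

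The hard part is the second step. Establishing sectorial (co)descent for the wrapped Fukaya category is the main technical achievement of \cites{GPS17,GPS18-1,GPS18-2}, resting on the wrapping exact triangles, the K\"unneth formula, and stop removal; matching this geometric gluing with Kashiwara--Schapira microlocalization through a single functor, together with the inductive argument along the strata of the singular Legendrian stop $\La^\infty$, is where essentially all the real work lies. By contrast the base case and the final compactness bookkeeping are comparatively routine.
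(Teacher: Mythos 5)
This theorem is not proved in the paper at all: it is quoted as the main theorem of \cite{GPS18-2} (with the sign convention on the Liouville form chosen so that no ``op'' appears, as the remark after the statement explains), so there is no internal argument to compare your proposal against. Your outline is a fair roadmap of how the cited works establish the result --- sectorial descent from \cites{GPS17,GPS18-1,GPS18-2}, reduction to a local model, generation by cotangent fibers, corepresentability of (micro)stalks, stop removal --- but, as you yourself acknowledge, every substantive step is precisely the content of those papers, so what you have written functions as a reading guide to the literature rather than a proof; in the context of this paper the correct move is simply the citation. Two small caveats on accuracy: the argument of \cite{GPS18-2} is logically independent of the Nadler--Zaslow equivalence (your proposed bootstrap through \cites{Nadler09,NaZa09} is an alternative heuristic, not their route), and on the sheaf side the objects matched with linking disks are corepresentatives of microlocal stalks along the Legendrian, not only the standard/costandard objects $j_{U!}\bC_U$; relatedly, one must keep track of the variance and orientation conventions (the $\cWL$ versus $\cWLo$ issue) which this paper disposes of by its choice of $\lambda=-p\,dx$.
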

Note that since we use the Liouville one-form $\lambda = -p dx$ on the cotangent bundle, we do not have $\Perf\cWLo$ but rather $\Perf\cWL$ in the above equivalence.
By a slightly abuse of notation we also use $\mu$ to denote the quasi-equivalence between $\ShtLc$ and $\cWtL$.

Let $U$ be an (open) toric polytope in $M_\bR$, and $\tF$ be the associated costandard sheaf in $\ShLc$ while $F$ be the associated costandard sheaf in $\ShLc$.
Let $\tL'$ be the Lagrangian brane graph $\Gamma_{-d\log f}$, where $f: \bar U\to \bR$ is a smooth function that
\[
f|_{U}>0,\ f(\partial U)=0;
\]
and $\Gamma_{d\log f}$ is the Lagrangian graph in $T^*M_\bR$. Define $L=\pi(L')$, where $\pi$ is the universal cover $T^*M_\bR \to T^*B$. After a finite but small positive push of $L'$ (resp. $\tL'$) at the infinity, we obtain $L\in \cWL$ (resp. $\tL \in \cWtL$). We call this Lagrangian $L$ and $\tL$ the costandard Lagrangian associated to $\cU\subset M_\bR$.\footnote{The terminology of a costandard Lagrangian is from \cites{NaZa09}, and the definition is slightly different since in the Nadler-Zaslow's infinitesimal Fukaya category $L'$ is the standard Lagrangian.}
\begin{proposition}
\[
\CC(\tF)=\CC(\mu^{-1}(\tL)).
\]
\label{prop:cc-standard}
\end{proposition}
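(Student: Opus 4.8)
The plan is to reduce the equality of characteristic cycles to the fact that both $\tF$ and $\mu^{-1}(\tL)$ are the costandard object associated to the open toric polytope $U$, and then to compute the characteristic cycle of that object directly. First I would recall that the microlocalization functor $\mu$ of Theorem~\ref{thm:GPS} is compatible with taking characteristic cycles: for any $G\in\ShtLc$, the characteristic cycle $\CC(G)$ is determined by the microlocal stalks of $G$ along $\tilde\Lambda$, and under $\mu$ these microlocal stalks correspond to the local Floer-theoretic data (linking numbers / intersection numbers with small cocores) of $\mu(G)\in\cWtL$. Thus it suffices to show that $\mu(\tF)$ and $\tL$ have the same microlocal behaviour along $\tilde\Lambda$, i.e. that $\tL$ \emph{is} (isomorphic in $\cWtL$ to) the image $\mu(\tF)$ of the costandard sheaf; equivalently, $\mu^{-1}(\tL)\cong\tF$.

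Second, I would identify $\tL$ explicitly. By construction $\tL$ is obtained from the graph $\Gamma_{-d\log f}$ of the differential of $-\log f$, where $f$ vanishes to first order on $\partial U$ and is positive inside, followed by a small positive pushoff at infinity. Near a boundary facet of $U$ the function $-\log f$ blows up like $-\log(\text{distance to }\partial U)$, so its graph limits onto the conormal directions pointing \emph{outward} from $U$; combined with the zero section over $U$ this is exactly the Lagrangian model for the costandard (i.e. $*$-extension) sheaf $j_*\bC_U$ in Nadler--Zaslow/GPS dictionary — the standard Lagrangian in Nadler--Zaslow's infinitesimal conventions becomes the costandard one here because of the sign convention $\lambda=-p\,dx$, as the footnote already flags. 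So $\mu^{-1}(\tL)$ is a sheaf with the same singular support as $\tF$, constant of rank one on $U$, and with the correct costandard corestriction maps across $\partial U$; by the classification of objects of $\ShtLc$ with this singular support (or by directly matching microlocal stalks, which for a costandard extension are concentrated in a single degree with rank one), $\mu^{-1}(\tL)\cong \tF$.

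Third, with the isomorphism $\mu^{-1}(\tL)\cong\tF$ in hand, the proposition is immediate since $\CC$ depends only on the isomorphism class in $\ShtLc$ (indeed only on the class in the Grothendieck group). Alternatively, if one prefers to avoid invoking the full classification, one can compute both sides independently: $\CC(\tF)$ for the costandard sheaf $j_*\bC_U$ on a lattice polytope $U$ is the standard signed sum over faces of conormal cells $\sum_{F\subset \partial U}(-1)^{\codim F}[N^*_{U}F]$ (with the zero-section term over $U$), and $\CC(\mu^{-1}(\tL))$ is read off from the geometry of the Lagrangian graph $\Gamma_{-d\log f}$ after pushoff, which produces the same faces with the same multiplicities because $-d\log f$ escapes to exactly the outward conormal rays along each face of $U$.

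\textbf{Main obstacle.} The delicate point is the identification in the second step: pinning down that the particular pushoff of $\Gamma_{-d\log f}$ used to define $\tL$ corresponds under $\mu$ to the \emph{co}standard rather than the standard sheaf, and that the ``finite but small positive push at infinity'' does not alter the microlocal stalks or the characteristic cycle. This is essentially a bookkeeping of sign and pushoff conventions across the GPS equivalence (the $\lambda=-p\,dx$ versus $\lambda=p\,dx$ issue noted after Theorem~\ref{thm:GPS}), together with an invariance statement for $\CC$ under the small Hamiltonian isotopy implementing the pushoff; I expect this to be the part requiring the most care, whereas the face-by-face computation of the two characteristic cycles is routine.
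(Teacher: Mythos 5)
Your main route overreaches the statement being proved and contains a genuine gap at its key step. The proposition asserts only an equality of characteristic cycles, whereas you argue for the object-level isomorphism $\mu^{-1}(\tL)\cong\tF$ and then deduce the equality as a corollary. The justification you give for that isomorphism --- ``by the classification of objects of $\ShtLc$ with this singular support (or by directly matching microlocal stalks)'' --- does not work: an object of $\ShtLc$ is not determined by its singular support, and the ranks of its microlocal stalks along $\tilde\Lambda$ determine exactly its characteristic cycle and nothing more (shifts, direct summands, local systems and extension/monodromy data are invisible to this numerical information). So matching microlocal ranks proves the proposition directly but cannot by itself upgrade to $\mu^{-1}(\tL)\cong\tF$; pinning down that isomorphism would require tracking the GPS functor on specific objects together with the pushoff and sign conventions, which is precisely the ``main obstacle'' you flag without resolving, and which the paper deliberately avoids needing.

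The salvageable part of your proposal is in fact the paper's argument. The paper writes $\CC(\tF)=\sum_i c_i\Lambda_i$ and $\CC(\mu^{-1}(\tL))=\sum_i c_i'\Lambda_i'$ as integer combinations of the conical pieces $\Delta_i\times(-\si_i)$ of $\tilde\Lambda$, notes that only the components with $\si_i\neq\{0\}$ matter, and detects each coefficient at a generic point $(x,p)$ of the corresponding component by an Euler characteristic: $\chi_{\ShtLc}(\bC_p,\tF)$ on one side, and $\chi_{\ShtLc}(\bC_p,\mu^{-1}(\tL))=\chi_{\cWtL}(L_p,\tL)$ on the other, where $L_p$ is the Legendrian linking disk at $p$ meeting $\Lambda_i$ transversally once; both equal $1$ when $x\in\partial U$ and $0$ otherwise. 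This is your ``compatibility of $\mu$ with microlocal stalks / linking disks'' run purely at the level of Euler characteristics, which is all that $\CC$ sees. If you drop the claimed isomorphism of objects and phrase your comparison this way, your argument coincides with the paper's. (Also, the signed face formula $\sum_{F}(-1)^{\mathrm{codim}\,F}[N^*_UF]$ you propose for $\CC(\tF)$ is not what the comparison uses; the paper works componentwise with multiplicity $1$ along conormals to boundary strata, with sign conventions absorbed into the definition of $\CC$, so be careful not to introduce a convention mismatch there.)
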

\begin{proof}
  Let $\tF'=\mu^{-1}(\tL)\in \ShtLc$. Then
  \[
  \CC(\tF)=\sum_{i=1}^k c_i \Lambda_i,\quad
  \CC(\tF')=\sum_{i=1}^k c'_i \Lambda'_i,
  \]
  where $\Lambda_i$ (resp. $\Lambda'_i$) is a linear (not necessarily complete) Lagrangian $\Delta_i \times (-\si_i)$ (resp. $\Delta'_i\times (-\si'_i)$), where $\Delta_i$ and $\Delta'_i$ are strata in $\tilde{\mathcal S}$, while $\si_i$ (or $\si'_i$) are fans in $\Si$. In particular if $\si_i=\{0\}$ then $\Lambda_i=\Delta_i$ is an open set in $M_\bR$. Notice that $\CC(\tF)$ and $\CC(\tF')$ are determined by the coefficients of $c_i$ for $\si_i\neq \{0\}$ since $M_\bR$ is contractible.

  Pick an interior point $(x,p)$ of any $\Lambda_i^\infty$ where $x\in M_\bR$ and $p\in S^\infty_x M_\bR$. Then
  \[
  \chi_{\ShtLc}(\bC_p, \tF)=c_i=\begin{cases}
  1,\text{ if $x\in \partial U$},\\
  0,\text{ if $x\notin \partial U$}.
  \end{cases}
  \]
  while
  \[
  \chi_{\ShtLc}(\bC_p, \tF')=\chi_{\cWtL}(L_p,\tL)=c_i=\begin{cases}
  1,\text{ if $x\in \partial U$},\\
  0,\text{ if $x\notin \partial U$}.
  \end{cases}
  \]
  Here $L_p$ is the Legendrian linking disk at $p$ which intersects $\Lambda_i$ transversally at a single point.
\end{proof}

If a (not necessarily closed) submanifold $V$ in $M_\bC^*$ represents a class in $\mathbb H_n$ we denote such class by $[V]$. For example, $\Gamma_i$ is a thimble over a path pointing rightwards, like in Figure \ref{fig:topView}, then it represents a class $[\Gamma_i]\in \mathbb H_n$. By Proposition \ref{prop:La-integrable} a costandard Lagrangian $L$ also represents a class $[\Psi^{-1}(L)]\in \mathbb H_n$. We sometimes just write $[L]$ for $[\Psi^{-1}(L)]$ for $L\subset T^*M_T$.

Passing from $M_\bR$ to $M_T$, from Proposition \ref{prop:cc-standard} and the fact that $[\Psi^{-1}(L)]=[\Psi^{-1}(\CC(F))]$ in $\bH_q$ we know that
\begin{corollary}
For any costandard and standard Lagrangian $L\in \cWL$,
\[
[\Psi^{-1}(\CC(\mu^{-1} (L)))]=[\Psi^{-1}(L)].
\]
\label{cor:cycle-standard}
\end{corollary}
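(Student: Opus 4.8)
The plan is to reduce Corollary~\ref{cor:cycle-standard} to the statement about $M_\bR$ already packaged in Proposition~\ref{prop:cc-standard}. The only new content is the passage from the universal cover $M_\bR$ to $M_T=M_\bR/M$, together with keeping track of which class in $\bH_q$ each geometric object represents. First I would recall that, by construction, the costandard Lagrangian $\tL\subset T^*M_\bR$ is $M$-equivariant away from a compact set (it is the graph $\Gamma_{-d\log f}$ over an $M$-translate family of toric polytopes $\wb U$, pushed off at infinity), and its quotient is exactly $L=\pi(\tL)\subset T^*M_T$; similarly the costandard sheaf $\tF\in\ShtLc$ descends to $F\in\ShLc$, so $\mu^{-1}(L)=F$ and $\CC(F)=\CC(\tF)/M$. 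Thus Proposition~\ref{prop:cc-standard}, which gives $\CC(\tF)=\CC(\mu^{-1}(\tL))$ as cycles in $T^*M_\bR$, descends verbatim to the identity $\CC(F)=\CC(\mu^{-1}(L))$ of Lagrangian cycles in $T^*M_T$. Hence $\Psi^{-1}(\CC(\mu^{-1}(L)))$ and $\Psi^{-1}(L)$ are, as Lagrangian submanifolds of $M_{\bC^*}$, supported on the same locus $\Lambda$ (which is integrable against $e^{-W/z}\Omega$ by Proposition~\ref{prop:La-integrable}), and it remains only to compare their classes in $\bH_q$.

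Next I would argue that $[\Psi^{-1}(\CC(F))]=[\Psi^{-1}(L)]$ in $\bH_q=H_n(\cY,\Re(W_q)\gg 0;\bZ)$. The cleanest route is a direct isotopy argument: the characteristic cycle $\CC(F)$ is, near infinity, the conormal variety $\Lambda$, and the costandard Lagrangian $L$ is by definition obtained from the zero-section-type graph $\Gamma_{-d\log f}$ over $\wb U$ after a small positive push-off at infinity so that it becomes admissible and cylindrical outside a compact set. One checks that $L$ is Hamiltonian-isotopic (through admissible Lagrangians, i.e. staying inside the region where $\Re(W_q)$ is controlled by Proposition~\ref{prop:La-integrable}) to a small perturbation of $\CC(F)$: both are asymptotic to the same conical Lagrangian $\Lambda$, and the isotopy connecting them can be taken to move only the compact part, hence never crossing the region $\{\Re(W_q)\gg 0\}$; therefore the two represent the same relative homology class. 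Alternatively, and perhaps more robustly, one invokes Proposition~\ref{prop:La-integrable} itself: it guarantees a conical neighborhood $V$ of $\Lambda_\Sigma\setminus M_T$ on which $|\arg W_q|<\epsilon''$, so any two admissible Lagrangians supported in $V$ outside a compact set and homotopic rel that behavior at infinity define the same class; since $\CC(F)$ and $L$ are both such, and both descend from the contractible picture on $M_\bR$ where $H_n$ computations are immediate (as in the proof of Proposition~\ref{prop:cc-standard}, using $M_\bR$ contractible), their classes agree.

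Finally I would assemble: combining $\CC(F)=\CC(\mu^{-1}(L))$ (the descent of Proposition~\ref{prop:cc-standard}) with $[\Psi^{-1}(L)]=[\Psi^{-1}(\CC(F))]$ (the previous paragraph, which is the statement indicated parenthetically as ``the fact that $[\Psi^{-1}(L)]=[\Psi^{-1}(\CC(F))]$ in $\bH_q$''), we obtain
\[
[\Psi^{-1}(\CC(\mu^{-1}(L)))]=[\Psi^{-1}(\CC(F))]=[\Psi^{-1}(L)],
\]
which is exactly Corollary~\ref{cor:cycle-standard}. The same argument applies word for word to the standard Lagrangian, replacing costandard sheaves by standard ones and $f$ by its negative reciprocal-type analogue, since the only structural inputs are $M$-equivariance at infinity, asymptotics to $\Lambda$, and Proposition~\ref{prop:La-integrable}.

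I expect the main obstacle to be the second step, namely making precise the claim that the small push-off defining the costandard Lagrangian $L$ does not change the relative homology class when compared with the characteristic cycle $\CC(\mu^{-1}(L))$. One must be careful that the isotopy realizing this stays admissible --- i.e. remains in the region where $\Re(W_q)$ is bounded below, controlled by the conical neighborhood $V$ and compact set $K$ of Proposition~\ref{prop:La-integrable} --- throughout; the subtlety is that $\CC(F)$ contains conormal directions $(-\sigma)$ for all cones $\sigma$, and one needs the push-off and the isotopy to respect the stratification $\widetilde{\mathcal S}$ so that the cycle never develops components pointing into $\{\Re(W_q)\gg 0\}$. This is exactly the role played by the careful choice of $\varphi_\bR$ ``adapted to $Q$'' in Section~\ref{sec:lg-a} and by Proposition~\ref{prop:La-integrable}; once those are invoked the homological comparison is routine, but spelling out the admissibility of the connecting isotopy is where the real work lies.
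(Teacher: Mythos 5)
Your proposal is correct and follows essentially the same route as the paper: descend Proposition~\ref{prop:cc-standard} from $M_\bR$ to $M_T$ and combine it with the fact that $[\Psi^{-1}(L)]=[\Psi^{-1}(\CC(F))]$ in $\bH_q$, which the paper asserts in one line and you flesh out with the isotopy/admissibility argument via Proposition~\ref{prop:La-integrable}. One small caveat: the claim ``$\mu^{-1}(L)=F$'' as sheaves is more than is justified or needed --- what descent of Proposition~\ref{prop:cc-standard} actually gives, and all you use, is the equality of characteristic cycles $\CC(\mu^{-1}(L))=\CC(F)$.
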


\subsection{Thimbles as objects in $\cWL$.}

\label{sec:thimbles-obj}
We require that $q\in \Ueec$. Then the superpotential $W_q$ has distinct Morse critical points with distinct critical values. Let $\theta$ be an admissible phase and $\Gamma_1,\dots,\Gamma_\fs$ be corresponding thimbles.

We cite \cite[Corollary 1.14]{GPS18-1}. As in \cites{GPS17, GPS18-1}, $\Gamma_i$ can be made into Lagragnian brane object $L_i$ in $\cWL$. Notice that when we say $[L]$ we always mean the class represented by a particular form of $L$ -- in our paper they are always (co)standard Lagrangians or thimbles.
\begin{proposition}
  \label{prop:exceptional}
  Thimbles $L_1,\dots,L_\fs$ form an exceptional collection, and they do generate $\cWL$.
\end{proposition}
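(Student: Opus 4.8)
The plan is to prove Proposition \ref{prop:exceptional} by combining the standard Picard--Lefschetz / Fukaya--Seidel picture for thimbles with the identification $\cWFS \cong \cWL$ established earlier. First I would recall that, for a Landau--Ginzburg model with holomorphic Morse $W_q$ (which holds since $q\in \Ueec$) and an admissible phase $\theta$, the Fukaya--Seidel-type category built from the vanishing paths $\gamma_1,\dots,\gamma_\fs$ of Definition \ref{def:thimbles-cycle} comes equipped with a distinguished basis of thimbles. This is exactly the content of \cite[Corollary 1.14]{GPS18-1}, which identifies the partially wrapped Fukaya category $\cW(T^*M_T; W^{-1}(+\infty))$ with the category generated by the Lefschetz thimbles over a choice of vanishing paths, exhibiting them as an exceptional collection: the morphism complex $\hom(L_i, L_j)$ is concentrated so that $\hom(L_i,L_i)\simeq \bC$ and $\hom(L_i,L_j)=0$ for $i>j$ (with the ordering induced by the admissible phase, matching the upper-triangular Stokes matrix of Section \ref{sec:thimbles}). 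So the ``exceptional collection'' part is essentially a citation plus a bookkeeping check that our conventions (direction of the rays, the clockwise/counterclockwise turn by $\delta$, the sign convention for $\lambda_{\mathrm{std}} = -\sum_i p_i\, d\theta_i$) line up with those of Ganatra--Pardon--Shende.

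Next, for the generation statement, I would argue as follows. By \cite[Corollary 2.9]{GPS18-1} we have $\cWFS = \cW(T^*M_T; W^{-1}(+\infty)) \simeq \cW(T^*M_T, \mathfrak c_F)$, the partially wrapped Fukaya category stopped at the core of the generic fiber; and for $q\in\Ueec$ real (or a small perturbation thereof) the results of \cite{ZhouPeng18,GammageShende17} identify this core with $\Lambda^\infty$, giving $\cWFS \simeq \cWL$ as recorded in the excerpt. Thimbles generate the Fukaya--Seidel category by the general structure theory of Lefschetz fibrations \cite{Seidelbook, GPS18-1}: a Lefschetz fibration with total space a Liouville sector is ``built'' from its fiber together with the thimbles attached along the vanishing cycles, so the thimbles over any complete system of vanishing paths split-generate $\cW(T^*M_T; W^{-1}(+\infty))$. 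Transporting this along the equivalence $\cWFS\simeq\cWL$, the images $L_1,\dots,L_\fs$ generate $\cWL$, hence $\Perf\cWL$.

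The key steps, in order, are: (1) invoke \cite[Corollary 1.14]{GPS18-1} to get that the $L_i$ form an exceptional collection, being careful that the admissible phase $\theta$ and the shape of the $\gamma_i$ in Figure \ref{fig:topView} are a legitimate choice of vanishing paths in the GPS sense and that the resulting ordering is compatible with the upper-triangularity $S_{ii}=1$, $S_{ij}=0$ for $i>j$ from Section \ref{sec:thimbles}; (2) use \cite[Corollary 2.9]{GPS18-1} together with the core computation of \cite{ZhouPeng18, GammageShende17} to pass between $\cWFS$ and $\cWL$; (3) use the general fact that thimbles generate a Fukaya--Seidel category of a Lefschetz fibration to conclude generation. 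I expect the main obstacle to be purely a matter of compatibility of conventions rather than of substance: verifying that our cotangent Liouville form $\lambda_{\mathrm{std}}=-\sum_i p_i\,d\theta_i$, the Legendre identification $\Psi\colon M_{\bC^*}\congto T^*M_T$, and the direction of the vanishing rays are set up so that the cited GPS theorems apply verbatim and produce the stated exceptional collection (and so that no spurious shifts or op-categories intervene, as flagged in the remark after Theorem \ref{thm:GPS}). Once those identifications are fixed, both assertions of the proposition follow directly from \cite{GPS18-1}.
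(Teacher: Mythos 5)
Your proposal matches the paper's approach: the paper gives no separate argument for this proposition, but simply cites \cite[Corollary 1.14]{GPS18-1} for the fact that the thimbles form a generating exceptional collection in the Fukaya--Seidel category, combined with the equivalence $\cWFS\cong\cWL$ already established via \cite[Corollary 2.9]{GPS18-1} and the skeleton computation of \cites{ZhouPeng18,GammageShende17}. Your additional attention to conventions (Liouville form, ordering versus the upper-triangular Stokes matrix) is a reasonable elaboration of the same citation-based argument, not a different route.
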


\begin{lemma}
  \[
  \chi_\cWL([L]_K,[L']_K)=S([L],[L']),
  \]
  where $L$ and $L'$ is either a thimble object or a standard object in $\cWL$.
\end{lemma}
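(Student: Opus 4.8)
The plan is to compute the left--hand side, which by definition is the Euler characteristic of the morphism complex $\Hom_{\cWL}(L,L')$, as an oriented count of transverse intersection points of a perturbation of $L$ with $L'$, and then to recognize that count as the topological Stokes pairing of the relative homology classes $[\Psi^{-1}(L)],[\Psi^{-1}(L')]\in\bH_q$. Using the equivalence $\cWL\cong\cWFS$ of the previous subsection I would carry out the computation inside the Fukaya--Seidel category $\cWFS=\cW(T^*M_T;W_q^{-1}(+\infty))$.

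First I would recall from \cite{GPS17,GPS18-1} that the morphism complex $\Hom_{\cWFS}(L,L')$ is computed by a cofinal positive wrapping: one replaces $L$ by $L^{w}$, obtained by flowing the conical end of $L$ by the positive wrapping isotopy, which near infinity rotates the argument of $W_q$ counterclockwise and is obstructed by the stop $W_q^{-1}(+\infty)$. For $L,L'$ thimbles or (co)standard Lagrangians the homs are finite--dimensional, so for a suitable representative $L^{w}$ we have $L^{w}\pitchfork L'$, all intersection points lie in a fixed compact subset of $\cY$, and $\Hom(L,L')$ has underlying graded vector space the free module on $L^{w}\cap L'$; hence
\[
\chi_\cWL([L]_K,[L']_K)=\sum_{x\in L^{w}\cap L'}(-1)^{\deg x}.
\]
Since the $\bZ$--grading used to build $\cWFS$ is the one induced by the complex structure, the parity of $\deg x$ is the local intersection sign of $L^{w}$ and $L'$ at $x$, so the right--hand side is the oriented intersection number of $L^{w}$ and $L'$.

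Next I would identify this oriented count with $S([L],[L'])$. By construction for thimbles, and by Proposition~\ref{prop:La-integrable} for (co)standard Lagrangians, the end of $L$ has $W_q$--image in the sector $\{|\arg W_q|<\epsilon''\}$, so $[\Psi^{-1}(L)]\in\bH_q=H_n(\cY,\Re(W_q)\gg0;\bZ)$. The positive wrapping that computes $\Hom(L,L')$ can be taken to drag the end of $L$ across the upper half of the $W_q$--plane until it points into $\{\Re(W_q)\ll0\}$: this is precisely the isotopy through $H_n(\cY,\Re(e^{-\bi\vartheta}W_q)\gg0;\bZ)$, $\vartheta:0\to\pi$, that defines $e^{\bi\pi}[\Psi^{-1}(L)]\in H_n(\cY,\Re(W_q)\ll0;\bZ)$ in Section~\ref{sec:thimbles}, so $[\Psi^{-1}(L^{w})]=e^{\bi\pi}[\Psi^{-1}(L)]$ and the intersection $L^{w}\cap L'$ is contained in the bounded region where a representative of $e^{\bi\pi}[\Psi^{-1}(L)]$ genuinely meets one of $[\Psi^{-1}(L')]$. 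Therefore the oriented count above is the value of the perfect pairing between $H_n(\cY,\Re(W_q)\ll0;\bZ)$ and $H_n(\cY,\Re(W_q)\gg0;\bZ)$ on these classes, namely $S([\Psi^{-1}(L)],[\Psi^{-1}(L')])=S([L],[L'])$. When $L=L_i$, $L'=L_j$ are thimbles this identifies $\chi_\cWL([L_i]_K,[L_j]_K)$ with the Stokes matrix entry $S_{ij}$, in agreement with \cite[Corollary~1.14]{GPS18-1} and Proposition~\ref{prop:exceptional}. For $L,L'$ (co)standard one may alternatively bypass Floer theory by the chain $\chi_\cWL([L]_K,[L']_K)=\chi_{\Coh(X)}([\iota^{-1}\mu^{-1}L]_K,[\iota^{-1}\mu^{-1}L']_K)=S(\si[\iota^{-1}\mu^{-1}L]_K,\si[\iota^{-1}\mu^{-1}L']_K)=S([\Psi^{-1}\CC(\mu^{-1}L)],[\Psi^{-1}\CC(\mu^{-1}L')])=S([\Psi^{-1}L],[\Psi^{-1}L'])$, invoking Theorems~\ref{thm:GPS},~\ref{thm:ccc},~\ref{thm:iritani},~\ref{thm:central} and Corollary~\ref{cor:cycle-standard}.

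The main obstacle is the step in the previous paragraph: identifying an abstract cofinal positive wrapping in the Liouville sector $(T^*M_T,W_q^{-1}(+\infty))$ with the concrete $e^{\bi\pi}$--rotation of the $W_q$--image, together with the bookkeeping that goes with it. One must check that no extra intersection points are created near infinity or by over--rotating past argument $\pi$, that the Floer (Maslov) grading parity really matches the geometric intersection sign so that the alternating sum is the oriented count, and that the same picture applies verbatim to (co)standard Lagrangians, using the $W_q$--conicality furnished by Proposition~\ref{prop:La-integrable} in place of the explicit thimble geometry. For thimbles this is in essence Seidel's description of the directed Fukaya--Seidel category combined with Picard--Lefschetz theory, and \cite[Corollary~1.14]{GPS18-1} already provides the needed identification of $\Hom_{\cWL}(L_i,L_j)$ with a vanishing--cycle Floer complex; handling all admissible objects uniformly is where a little extra care with the wrapping is required.
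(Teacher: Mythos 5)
Your proposal is correct and takes essentially the same route as the paper's proof: compute $\Hom_{\cWL}(L,L')$ by a counterclockwise perturbation (positive wrapping) of one object and identify the resulting signed count of intersection points with the Stokes pairing $S$, which is defined via the $e^{\bi\pi}$-rotation of the tail. You simply supply more detail than the paper does (matching the wrapping with the rotation, grading parity versus intersection sign, and an optional K-theoretic bypass for costandard objects via Theorem \ref{thm:central} and Corollary \ref{cor:cycle-standard}), but the underlying argument is the same.
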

\bpf
Since $\cWL$ are generated by thimbles, without loss of generality we may assume $L, L'$ are thimbles, in particular lying over vanishing paths $\gamma, \gamma'$ in $\C$ tending to $+\infty$. By definition,  $\Hom_{\cWL}(L, L')$ is generated by the intersection points of thimbles $\wt L$ with $L'$, where $\wt L$ is the thimble over the counterclockwisely perturbed vanishing path $\wt \gamma$ of $\gamma$, such that $\wt \gamma \cap \gamma$ transversely. Hence both sides boils down to counting intersection points of $\wt L$ with $L'$ with signs, and the equality can be verified.
\epf

A thimble object $L_i$ corresponds to a class in $\bH_q$, denoted by $[L_i]$. By our construction $[L_i]=[\Gamma_i]$.

\begin{proposition}
  Assuming
  \[
  [L_i]_K=\sum_{j=1}^k c_{ij} [G_j]_K,
  \]
  where $G_j$ are standard Lagrangians, we have
  \[
  [L_i]=\sum_{j=1}^k c_{ij} [G_j].
  \]
  \label{prop:thimble-in-standards}
\end{proposition}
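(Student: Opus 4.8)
The plan is to reduce the identity of relative homology classes to the already-established $K$-theoretic identity by exploiting two facts: (i) the pairing $S(-,-)$ on $\bH_q$ is perfect, so a class in $\bH_q$ is determined by its $S$-pairings against all standard Lagrangians (equivalently, against all thimbles, since thimbles generate); and (ii) both $[\Gamma_i]$ and the $[G_j]$ arise as $\Psi^{-1}(\CC(\mu^{-1}(-)))$ of objects in $\cWL$, for which the pairing $S$ computes the Euler pairing $\chi_{\cWL}$ by the Lemma just proved. So the strategy is: take $S(-, [G])$ of both sides of the claimed equation for an arbitrary standard Lagrangian $G$, and verify the two sides agree using the $K$-theoretic hypothesis and the compatibility of $\chi_{\cWL}$ with $S$.

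Concretely, first I would record that by Corollary \ref{cor:cycle-standard}, for any standard Lagrangian $G$ we have $[G] = [\Psi^{-1}(\CC(\mu^{-1}(G)))] \in \bH_q$, and likewise, via Proposition \ref{prop:exceptional} and Corollary \ref{cor:cycle-standard} together with Theorem \ref{thm:central}, the thimble class $[L_i] = [\Gamma_i]$ pairs with standards in a way computed on the Fukaya side. Second, for a fixed standard Lagrangian object $G$ in $\cWL$, compute
\[
S\bigl([L_i], [G]\bigr) = \chi_{\cWL}\bigl([L_i]_K, [G]_K\bigr) = \sum_{j=1}^k c_{ij}\, \chi_{\cWL}\bigl([G_j]_K, [G]_K\bigr) = \sum_{j=1}^k c_{ij}\, S\bigl([G_j], [G]\bigr),
\]
where the outer equalities are the Lemma $\chi_{\cWL}([L]_K,[L']_K) = S([L],[L'])$ applied to $(L_i, G)$ and to each $(G_j, G)$, and the middle equality is the $K$-theoretic hypothesis $[L_i]_K = \sum_j c_{ij}[G_j]_K$ together with bilinearity of $\chi_{\cWL}$. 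Thus $S\bigl([L_i] - \sum_j c_{ij}[G_j], [G]\bigr) = 0$ for every standard Lagrangian $G$.

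Third, I would invoke that the standard Lagrangians generate $\cWL$ (they are, up to the CCC, a generating set of costandard/standard sheaves dual to the structure sheaves of toric strata, or one may instead use that the thimbles generate and that the standards span the same $K$-group), so their classes span $\bH_q$; since $S$ is a perfect pairing on $\bH_q$ — equivalently the perfect pairing between $H_n(\cY, \Re(W_q)\gg 0)$ and $H_n(\cY,\Re(W_q)\ll 0)$ recalled in Section \ref{sec:thimbles} — a class pairing to zero against a spanning set must vanish. Hence $[L_i] = \sum_j c_{ij}[G_j]$ in $\bH_q$, which is the claim.

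The main obstacle I anticipate is bookkeeping about what ``standard Lagrangian $G_j$'' means precisely and ensuring their classes $\{[G_j]\}$ really do span $\bH_q$ (or at least a sublattice large enough that nondegeneracy of $S$ forces the conclusion); one needs the microlocalization equivalence $\mu$ of Theorem \ref{thm:GPS} and Proposition \ref{prop:exceptional} to guarantee that standards generate $\cWL$, and then Corollary \ref{cor:cycle-standard} plus the compatibility of the map $L \mapsto [\Psi^{-1}(L)]$ with $K$-classes. A secondary point to check carefully is that the Lemma relating $\chi_{\cWL}$ and $S$ applies with one argument a thimble and the other a standard — the proof of that Lemma reduces the standard case to thimbles using that thimbles generate, so this is already covered, but I would state explicitly that it extends bilinearly to all pairs of objects whose classes we use. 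No genuinely new geometric input is needed beyond what is assembled above; the argument is a formal consequence of perfectness of $S$ and the dictionary $\chi_{\cWL} \leftrightarrow S$.
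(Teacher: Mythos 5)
Your proposal is correct and follows essentially the same route as the paper's own proof: pair both sides against an arbitrary standard Lagrangian, convert $S$ to $\chi_{\cWL}$ via the Lemma, use linearity of the $K$-theoretic hypothesis, and conclude from the fact that standard branes span $\bH_q$ and $S$ is perfect. No substantive difference from the paper's argument.
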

\begin{proof}
Let $F'$ be any standard Lagrangian in $\cW$.
\[
\chi_\cWL([L_i]_K,[F']_K)=\chi_\cWL(\sum_j c_{ij}[G_j]_K, [F']_K)= \sum_j{c_{ij}}S([G_j],[F']).
\]
On the other hand
\[
\chi_\cWL([L_i]_K,[F']_K)=S([L_i],[F']).
\]
Since $[F']$ could be chosen as any standard branes which span $\bH$, the fact that $S$ is a perfect pairing implies
\[
[L_i]=\sum_{j=1}^k c_{ij}[G_j].
\]
\end{proof}
Since standard Lagrangians generate $\cWL$, by Corollary \ref{cor:cycle-standard} and Proposition \ref{prop:thimble-in-standards} we have the following.
\begin{corollary}
  If $L$ is a thimble object in $\cW$, then
  \[
  [L]=[\Psi^{-1}\CC(\mu^{-1}(L))].
  \]
  \label{cor:cycle-thimble}
\end{corollary}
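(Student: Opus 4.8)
The plan is to bootstrap the statement for a thimble from the already-proved statement for standard Lagrangians (Corollary \ref{cor:cycle-standard}) by working at the level of $K$-theory, exploiting that every map in sight --- the microlocalization functor $\mu$, the characteristic cycle $\CC$, and the pushforward under $\Psi^{-1}$ to $\bH_q$ --- is additive on $K$-groups, together with the perfectness of the Stokes pairing $S$ that is built into Proposition \ref{prop:thimble-in-standards}. So no geometry of Lagrangians is manipulated directly; everything is an identity of classes in $\bH_q$.

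Concretely, since standard Lagrangians generate $\cWL$, the $K$-class of the thimble object $L$ can be written as $[L]_K=\sum_{j=1}^k c_j\,[G_j]_K$ with $G_j$ standard Lagrangians and $c_j\in\bZ$. By Proposition \ref{prop:thimble-in-standards} this relation lifts to an identity of relative homology classes $[L]=\sum_{j=1}^k c_j\,[G_j]$ in $\bH_q$. Now transport the $K$-theoretic relation through the equivalence $\mu$ of Theorem \ref{thm:GPS}, which induces an isomorphism on $K$-groups and hence gives $[\mu^{-1}(L)]_K=\sum_j c_j\,[\mu^{-1}(G_j)]_K$; here $\mu^{-1}(L)\in\ShLc$ is a genuine bounded constructible sheaf because in the toric case $\Sh^w_\Lambda(T^n)\cong\Sh_\Lambda(T^n)$, so $\CC(\mu^{-1}(L))$ is literally defined. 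Since the characteristic cycle factors through $K(\ShLc)$ and, after $\Psi^{-1}$ and Proposition \ref{prop:La-integrable}, produces classes in $\bH_q$, additivity yields $[\Psi^{-1}\CC(\mu^{-1}(L))]=\sum_j c_j\,[\Psi^{-1}\CC(\mu^{-1}(G_j))]$ in $\bH_q$. Finally, Corollary \ref{cor:cycle-standard} applied to each $G_j$ gives $[\Psi^{-1}\CC(\mu^{-1}(G_j))]=[\Psi^{-1}(G_j)]=[G_j]$, and combining the three displays gives $[\Psi^{-1}\CC(\mu^{-1}(L))]=\sum_j c_j\,[G_j]=[L]$.

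There is no real obstacle once the earlier results are in place; the one point that genuinely requires the preceding machinery is the lift from a $K$-theoretic identity to an identity of homology classes (Proposition \ref{prop:thimble-in-standards}), which is where the perfectness of $S$ and the compatibility $\chi_\cWL([-]_K,[-]_K)=S([-],[-])$ of the Euler pairing with the Stokes form are used. Everything else is formal additivity; in particular we never need $\CC(\mu^{-1}(L))$ and $\sum_j c_j\CC(\mu^{-1}(G_j))$ to agree as honest Lagrangian cycles, only as classes in $\bH_q$, which is automatic since $\CC$ is defined on $K(\ShLc)$.
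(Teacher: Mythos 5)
Your proposal is correct and follows essentially the same route as the paper: the paper's (terse) argument is precisely to expand the thimble's $K$-class in standard Lagrangians, lift that relation to $\bH_q$ via Proposition \ref{prop:thimble-in-standards}, use that $\CC\circ\mu^{-1}$ and $\Psi^{-1}$ are additive through $K$-theory, and conclude with Corollary \ref{cor:cycle-standard}. You have simply made explicit the $K$-theoretic additivity steps that the paper leaves implicit.
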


\section{Gamma II conjecture}

\label{sec:gamma-ii}

In this section we prove the Gamma II conjecture for complete smooth toric Fano variety $X$.

\subsection{Asymptotic flat sections and the Gamma II conjecture}

Recall that the quantum cohomology $QH_\btau^*(X)$ is semisimple for $\btau\in \tUee$ for sufficiently small $\epsilon$, with canonical basis $\{\phi_i\}$.

The quantum connection admits a set of asymptotic fundamental solutions. Let $\hat\phi_i=\phi_i/\sqrt{(\phi_i,\phi_i)}$ be the normalized idempotent basis. We define the map ${\bPsi}:\bC^\fs\to H^*(X)$ to be
\[
\bPsi\begin{pmatrix}a_1\\\vdots\\a_\fs\end{pmatrix}=a_1 \hat\phi_s+\dots a_\fs \hat\phi_\fs.
\]
Recall that $\{H^i\}_{i=1}^\fs$ is a basis of $H^*(X)$, and one writes $\bPsi$ as a matrix left multiplication
\begin{equation}
\label{eqn:Psi}
\bPsi=\begin{pmatrix} H^1,\dots,H^\fs \end{pmatrix}\Psi\cdot
\end{equation}
where $\Psi$'s $j$-th column vector is $\hat\phi_j$'s coodinates in $H^1,\dots,H^\fp$, i.e. the $(i,j)$-th element is $(H_i,\hat\phi_j)$.

The quantum multiplication $c_1(X)\star_{\btau}$ acts on $H^*(X)$ and its eigenvectors are $\phi_1,\dots,\phi_\fs$ with eigenvalues $u_1,\dots,u_\fs$, i.e. $c_1(X)\star_{\btau} \phi_i=u_i \phi_i$. Let $U$ be the diagonal matrix $\mathrm{diag}(u_1,\dots,u_\fs)$. Then the following well-known theorem provides asymptotic fundamental solutions.

\begin{theorem}[\cites{Du93,Gi01a}]
  \label{thm:Dubrovin-Givental-decomposition}
  When the small quantum cohomology $QH^*_{\btau}(X)$ is semisimple, the quantum connection \eqref{eqn:qde-tau-off} has the following fundamental solutions
\begin{equation}
  \label{eqn:Dubrovin-Givental-decomposition}
  \bPsi R(z) e^{-\frac{U}{z}},
\end{equation}
  where $R(z)=\mathrm{id}+R_1z+R_2z^2+\dots \in \mathrm{End}(\bC^\fs)\llbracket z\rrbracket$ is a matrix-valued formal power series in $z$. This formal solution is unique up to a signed permutation matrix multiplied from the right, which corresponds to the ambiguity of the order of $\Psi_a$.
\end{theorem}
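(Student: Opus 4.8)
This is the classical Dubrovin--Givental normal form at an irregular singularity of Poincar\'e rank one, and the plan is to prove it by the standard reduction together with an explicit recursion. Fix $\btau\in\tUeec$, so that $QH^*_\btau(X)$ is semisimple and the eigenvalues $u_1,\dots,u_\fs$ of $c_1(X)\star_\btau$ are pairwise distinct (under $\mir$ these are the distinct critical values of $W_q$; see Proposition~\ref{prop:identification-mir}). On the locus $\btau\in H^2(X)$ the Euler field equals $c_1(X)$, so a flat section $Y$ of $\nabla^\btau_{z\partial z}$ in \eqref{eqn:qde-tau-off} is a solution of $z\partial_z Y=\bigl(\tfrac1z\,c_1(X)\star_\btau-\widetilde{\deg}\bigr)Y$; note that, $\btau$ being fixed, $\bPsi$ is a constant invertible matrix here.

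First I would conjugate by $\bPsi$. A short Frobenius-algebra computation gives $(\phi_i,\phi_j)=\delta_{ij}(\phi_i,\phi_i)$, and $(\phi_i,\phi_i)\neq0$ by Proposition~\ref{prop:identification-mir}, so $\{\hat\phi_i\}$ is orthonormal for the Poincar\'e pairing; hence $\bPsi$ is an isometry and $\bPsi^{-1}\bigl(c_1(X)\star_\btau\bigr)\bPsi=U:=\mathrm{diag}(u_1,\dots,u_\fs)$. Substituting $Y=\bPsi R(z)e^{-U/z}$ with $R(z)=\mathrm{id}+\sum_{k\geq1}R_kz^k$ turns the equation into
\[
zR'(z)=\frac1z[U,R(z)]-VR(z),\qquad V:=\bPsi^{-1}\,\widetilde{\deg}\,\bPsi .
\]
Because the eigenvalue $p-\tfrac{\dim X}{2}$ of $\widetilde{\deg}$ on $H^{2p}(X)$ reverses sign under $p\mapsto\dim X-p$, the operator $\widetilde{\deg}$ is skew-symmetric for the Poincar\'e pairing, so $V$ is skew-symmetric and in particular has vanishing diagonal; this is precisely what lets the ansatz use $e^{-U/z}$ with no extra $z^{D_0}$ factor.

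Next I would solve the recursion order by order in $z$. The operator $\mathrm{ad}_U$ multiplies the $(i,j)$-entry by $u_i-u_j$, hence is invertible on off-diagonal matrices and zero on diagonal ones. The coefficient of $z^0$ gives $[U,R_1]=V$: the diagonal part is the already-checked identity $V_{\mathrm{diag}}=0$, and the off-diagonal part determines $R_1^{\mathrm{off}}$. For $k\geq1$ the coefficient of $z^k$ gives $[U,R_{k+1}]=(kI+V)R_k$; its diagonal part reads $kR_k^{\mathrm{diag}}=-(VR_k^{\mathrm{off}})_{\mathrm{diag}}$, which fixes $R_k^{\mathrm{diag}}$, and its off-diagonal part fixes $R_{k+1}^{\mathrm{off}}$. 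Thus $R_1^{\mathrm{off}},R_1^{\mathrm{diag}},R_2^{\mathrm{off}},R_2^{\mathrm{diag}},\dots$ are determined successively, which yields existence and, for fixed $\bPsi$ and $U$, uniqueness of the formal solution; equivalently, the ratio $S=R_2^{-1}R_1$ of two solutions with the same $U$ satisfies $z^2S'=[U,S]$ with $S=\mathrm{id}+O(z)$, and the same diagonal/off-diagonal bookkeeping forces $S=\mathrm{id}$.

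Finally I would account for the residual ambiguity. The idempotents $\phi_i$ are canonical only up to permutation and each $\hat\phi_i$ only up to a sign, so $\bPsi$ is well-defined only up to right multiplication by a signed permutation $PD$ ($P$ a permutation matrix, $D=\mathrm{diag}(\pm1)$); since the recursion is equivariant under this action, replacing $(\bPsi,U,R)$ by $(\bPsi PD,\,P^{-1}UP,\,(PD)^{-1}R\,PD)$ produces another solution of the required form whose fundamental matrix $\bPsi R(z)e^{-U/z}$ is right-multiplied by $PD$, and these are all of them. The one place the hypotheses are used essentially is the pair of facts $V_{\mathrm{diag}}=0$ (skew-symmetry of $\widetilde{\deg}$ together with $\bPsi$ isometric, hence $(\phi_i,\phi_i)\neq0$) and the distinctness of the $u_i$ (semisimplicity plus the Fano/holomorphic-Morse hypothesis): the former kills the would-be $z^{D_0}$ factor, and the latter makes $\mathrm{ad}_U$ invertible off the diagonal so that the recursion closes. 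Setting up these two inputs cleanly is the main technical point; the recursion itself is routine.
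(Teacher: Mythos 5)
The paper itself does not prove Theorem \ref{thm:Dubrovin-Givental-decomposition}: it is quoted from Dubrovin and Givental, so there is no internal argument to compare yours against. Taken on its own terms, your reduction is the standard one and the computations are right: on $H^2(X)$ the Euler field is $c_1(X)$, conjugating by $\bPsi$ (an isometry, since the idempotents are orthogonal and $(\phi_i,\phi_i)\neq 0$) turns \eqref{eqn:qde-tau-off} into $zR'(z)=\tfrac1z[U,R(z)]-VR(z)$, the skew-symmetry of $\widetilde{\deg}$ for the Poincar\'e pairing gives $V_{\mathrm{diag}}=0$, and the diagonal/off-diagonal recursion $[U,R_{k+1}]=(k\,\mathrm{id}+V)R_k$ together with your $z^2S'=[U,S]$ uniqueness check and the signed-permutation bookkeeping is exactly the classical proof.

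There is, however, a genuine gap relative to the statement as written. The theorem assumes only that $QH^*_\btau(X)$ is semisimple, and semisimplicity does not force the eigenvalues $u_1,\dots,u_\fs$ of $c_1(X)\star_\btau$ to be pairwise distinct: for $\bP^1\times\bP^1$ at $q_1=q_2$ small and positive the algebra is semisimple while $0$ is a repeated eigenvalue, and such points lie in $\Uee$ but outside $\Ueec$. Your recursion needs $\mathrm{ad}_U$ to be invertible on every off-diagonal entry, so it closes only when the $u_i$ are pairwise distinct; at a semisimple point with $u_i=u_j$, $i\neq j$, the step determining $R_{k+1}^{\mathrm{off}}$ breaks down on the corresponding block, and your opening move of fixing $\btau\in\tUeec$ (and the assertion that ``semisimplicity plus the Fano/holomorphic-Morse hypothesis'' yields distinctness) silently restricts the theorem to that locus. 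To obtain the statement in its stated generality one needs the additional input of flatness in the $\btau$-directions (Dubrovin's isomonodromic argument, cf.\ \cite[Proposition 2.5.1]{GGI16} and the references there) to dispose of the obstructions in the blocks with coinciding eigenvalues. For the way the theorem is actually used in Section \ref{sec:proof}, where $\btau_0\in\tUeec$ and the critical values of $W_{q_0}$ are distinct, your argument does suffice.
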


Similarly to Section \ref{sec:thimbles}, we say a phase $\theta\in \bR$ is \emph{admissible} if $\mathrm{Im}(u_i e^{-\bi\theta})\neq \mathrm{Im}(u_j e^{-\bi\theta})$ for $u_i\neq u_j$, i.e. the segment between $u_i$ and $u_j$ is not  parallel to $e^{\bi\theta}$. We do not require $u_1,\dots, u_\fs$ are distinct but when $\btau\in \tUeec$, they are. These fundamental solutions have analytic lifts, as given in the following theorem.
\begin{theorem}\textup{\cite[Theorem 12.2]{Wasow1987}, \cite[Theorem A]{BJL1979}, \cite[Lectures 4,5]{Dubrovin1999}, \cite[Section 8]{BrTo2013}, \cite[Proposition 2.5.1]{GGI16}}
  At a semisimple point $\btau\in \Uee$, for an admissible phase $\theta\in \bR$, there exists $\delta >0$ and analytical fundamental solutions $Y_\theta(z)=(y_1^\theta(z),\dots, y_\fs^\theta(z))$ to the quantum connection \eqref{eqn:qde-tau-off} in the region $|\arg(z)-\theta|<\frac{\pi}{2}+\delta$ around $z=0$ such that
  \[
  Y_\theta(z) e^{\frac{U}{z}}\sim \bPsi R(z).
  \]
\end{theorem}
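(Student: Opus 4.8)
\medskip
\noindent\textbf{Strategy for the proof.}
The plan is to obtain this statement as an instance of the classical theory of analytic continuation and Borel summation for meromorphic linear ODE's with an irregular singularity, applied to \eqref{eqn:qde-tau-off} regarded as an ODE in $z$ alone with $\btau$ fixed. First I would record the local shape of the singularity: a flat section $s(z)$ of $\nabla^\btau_{z\partial z}$ obeys $z\,s'(z)=z^{-1}(E\star_\btau)s(z)-\widetilde{\mathrm{deg}}\,s(z)$, so $s'(z)=\big(z^{-2}(E\star_\btau)-z^{-1}\widetilde{\mathrm{deg}}\big)s(z)$ has at $z=0$ an irregular singular point of Poincar\'e rank $1$ whose leading coefficient is the operator $c_1(X)\star_\btau$ (recall $E=c_1(X)$ on $H^2(X)$, where $\btau$ lives); since $QH^*_\btau(X)$ is semisimple this operator is diagonalizable with eigenvalues $u_1,\dots,u_\fs$, and the passage to its normalized eigenbasis is exactly the matrix $\bPsi$. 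Because the structure constants of $\star_\btau$ converge for $\btau\in\Uee$ with $\epsilon$ small, all entries of the connection matrix are honestly holomorphic near $z=0$, so the analytic theory applies.

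Next, Theorem \ref{thm:Dubrovin-Givental-decomposition} provides the \emph{formal} fundamental solution $\bPsi R(z)e^{-U/z}$, i.e.\ the formal gauge transformation $\bPsi R(z)$, $R(z)=\mathrm{id}+zR_1+z^2R_2+\cdots$, conjugating the system into its diagonal normal form $z\partial_z+z^{-1}U$ (fundamental solution $e^{-U/z}$). What must be added is an \emph{analytic} reduction on a sector of opening exceeding $\pi$: this is precisely the rank-$1$ case of the Hukuhara--Turrittin / Birkhoff--Levelt summability theory, and is exactly what the quoted references supply. Concretely, the Stokes (singular) directions of the Borel transform of $\bPsi R(z)$ are the $\arg z\equiv\arg(u_i-u_j)\pmod\pi$ with $u_i\neq u_j$; a phase $\theta$ avoids all of them iff $\Im\big((u_i-u_j)e^{-\bi\theta}\big)\neq 0$ for every such pair, which is the admissibility hypothesis. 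Hence, letting $\delta>0$ be the angular distance from an admissible $\theta$ to the nearest Stokes direction (positive, being a minimum over finitely many pairs), Borel summation in the pencil of directions within $\delta$ of $\theta$ produces a holomorphic matrix $G_\theta(z)$ on the sector $|\arg z-\theta|<\tfrac{\pi}{2}+\delta$ with $G_\theta(z)\sim\bPsi R(z)$ there; then $Y_\theta(z):=G_\theta(z)e^{-U/z}$ is an analytic fundamental solution of \eqref{eqn:qde-tau-off} on that sector satisfying $Y_\theta(z)e^{U/z}\sim\bPsi R(z)$ as $z\to 0$. Because the sector has opening $>\pi$, the standard uniqueness statement (a solution asymptotic to $0$ on a sector wider than $\pi$ vanishes identically) shows $Y_\theta$ is the unique such fundamental solution, and that the summation reproduces exactly the $R(z)$ of Theorem \ref{thm:Dubrovin-Givental-decomposition}; two formal fundamental solutions differ by a right signed-permutation factor, fixed here by the chosen ordering of the $u_i$ and the $\hat\phi_i$.

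I expect the only genuinely substantial point is the summability step itself -- promoting the formal normal-form reduction to an analytic one on a sector of opening more than $\pi$ -- which I would simply cite. Two minor matters of care remain: first, one should handle the possibility $u_i=u_j$ for $i\neq j$, which admissibility does not constrain, either by first proving the theorem on $\tUeec$ (where the $u_i$, being the distinct critical values of $W_q$, are distinct) and then extending to all semisimple $\btau$ by analytic continuation in $\btau$, or by invoking the block form of the Hukuhara--Turrittin normal form, in which a repeated-eigenvalue block contributes no Stokes direction and no obstruction; second, that the factor $\bPsi$ and the exponential $e^{-U/z}$ in $Y_\theta(z)e^{U/z}\sim\bPsi R(z)$ are the ones attached to the idempotents of $QH^*_\btau(X)$ is already built into Theorem \ref{thm:Dubrovin-Givental-decomposition}, so no separate identification is needed.
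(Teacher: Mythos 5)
Your proposal is correct and follows essentially the same route as the paper, which does not reprove this statement but cites it as the classical rank-one irregular-singularity summability theory (Wasow, Balser--Jurkat--Lutz, Dubrovin, Bridgeland--Toledano Laredo, GGI): formal solution from Theorem \ref{thm:Dubrovin-Givental-decomposition}, Stokes rays given by $\arg(u_i-u_j)$, and analytic lifting on sectors of opening greater than $\pi$ avoiding them, with uniqueness from the wide sector. Your observations that $E\star_\btau=c_1(X)\star_\btau$ for $\btau\in H^2$ and that coincident eigenvalues are handled either on $\tUeec$ or via the block normal form are consistent with how the cited sources treat the statement.
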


These analytic solutions $y_i^\theta$ can be analytically continuated, along a path in $\bC^*$, to $\arg(z)=0$, denoted by $\bar y_i^\theta$. The Gamma II conjecture is about how to express these solutions by $\mathcal Z([E])$ for some $E\in D^b\Coh(X)$.
\begin{conjecture}[Gamma II conjecture]
  \label{conj:gamma-ii}
  Assume the quantum cohomology of a Fano variety $X$ is semisimple at $\btau\in H^2(X;\bC)$ then any admissible phase $\theta$, there exists full exceptional collection $\{E^\theta_1,\dots,E^\theta_\fs\}$ in $D^b\Coh(X)$ such that $\bar y_i^\theta(z)=\cZ([E^\theta_i])$ for $i=1,\dots,\fs$.
\end{conjecture}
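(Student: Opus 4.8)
The plan is to run the two halves of the strategy sketched in the introduction in parallel: on the categorical side, pass a full exceptional collection of thimbles through $\mu$ and $\iota$ to get a full exceptional collection on $X$; on the analytic side, recognize the oscillatory integrals over those thimbles as the analytic asymptotic solutions $y_i^\theta$. Fix a semisimple parameter $\btau\in\tUeec$ (equivalently $q\in\Ueec$, with $\epsilon,\epsilon'$ small), and an admissible phase $\theta$. First I would take the Lefschetz thimbles of phase $\theta$ in their ``unrotated'' form $\Gamma_1^\theta,\dots,\Gamma_\fs^\theta$, with tails asymptotic to $e^{\bi\theta}\infty$, so that they are cycles in $H_n(\cY,\Re(e^{-\bi\theta}W_q)\gg 0)$; under the $\theta\to 0$ isotopy through $H_n(\cY,\Re(e^{-\bi\theta'}W_q)\gg 0)$ these are tracked by the local system $R^\vee_\bZ$ and land in $\bH_q$. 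Made into Lagrangian brane objects $L_i$ in $\cWL$ (using $\cWFS\cong\cWL$), they form a full exceptional collection generating $\cWL$ by Proposition \ref{prop:exceptional}. Applying $\mu^{-1}$ of Theorem \ref{thm:GPS} and then $\iota^{-1}$ of Theorem \ref{thm:ccc} yields objects $E_1^\theta,\dots,E_\fs^\theta\in D^b\Coh(X)$; since $\mu$ and $\iota$ are equivalences, exceptionality and generation are preserved, so $\{E_i^\theta\}$ is a full exceptional collection. This gives the existence half of Conjecture \ref{conj:gamma-ii} for $X$.

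Next I would identify the period attached to $E_i^\theta$ with the thimble integral. By Corollary \ref{cor:cycle-thimble}, $[\Psi^{-1}(\CC(\mu^{-1}(L_i)))]=[L_i]=[\Gamma_i^\theta]$ in $\bH_q$, and combining with Theorem \ref{thm:central} gives $\si([E_i^\theta]_K)=[\Gamma_i^\theta]$. Then Corollary \ref{cor:B-S-flat} together with the Fukaya/Gromov--Witten comparison recorded above Proposition \ref{prop:any-primary-insertion} (namely $\cZ(E)=s_{\Psi^{-1}(\CC(\iota(E)))}$, extended to $D^b\Coh(X)$ by triangulatedness) yields the identity of multivalued flat sections of $\cF$ over $\Uee\times\bC^*$:
\[
\cZ([E_i^\theta]) \;=\; s_{\Gamma_i^\theta} \;=\; (-2\pi z)^{-n/2}\sum_{j=1}^\fs\Big(\int_{\Gamma_i^\theta} \sf_j\, e^{-W/z}\,\Omega\Big) H^j .
\]
It therefore remains to show that $s_{\Gamma_i^\theta}$, viewed in the sector $|\arg z-\theta|<\tfrac\pi2+\delta$ and then analytically continued to $\arg z=0$, coincides with the analytic solution $y_i^\theta$ of the quantum connection \eqref{eqn:qde-tau-off} provided by the Wasow--BJL--Dubrovin type theorem cited after Theorem \ref{thm:Dubrovin-Givental-decomposition}; this gives $\bar y_i^\theta=\cZ([E_i^\theta])$.

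For this I would carry out a steepest-descent (stationary phase) analysis of $\int_{\Gamma_i^\theta}\sf_j\,e^{-W/z}\,\Omega$ as $z\to 0$ with $\arg z\approx\theta$. Since $q\in\Ueec$, $W_q$ is holomorphic Morse, $\Gamma_i^\theta$ is the descending manifold of $\Re(e^{-\bi\theta}W_q)$ through the critical point $p_i$ with $W_q(p_i)=\crv_i$, and the integral localizes at $p_i$. The Gaussian leading term, combined with Iritani's normalization $(-2\pi z)^{-n/2}$ and the mirror identifications of Proposition \ref{prop:identification-mir} (so that $\det\Hess_{p_i}(W_q)=-\Delta_i=-1/(\phi_i,\phi_i)$, $\mir(\phi_i)=[\varphi_i]$ with $\varphi_i(p_k)=\delta_{ik}$, $\sf_j(p_i)$ reads off the $\phi_i$-component of $H_j$ hence $\sum_j\sf_j(p_i)H^j=\Delta_i\phi_i=\sqrt{\Delta_i}\,\hat\phi_i$, and $\mir(c_1(X))=[W_q]$ so that the eigenvalue $u_i$ of $c_1(X)\star_\btau$ equals $\crv_i$ after matching the labeling), shows
\[
s_{\Gamma_i^\theta}(z)\;\sim\; e^{-u_i/z}\big(\hat\phi_i+O(z)\big),\qquad z\to 0 ,
\]
up to a universal nonzero scalar fixed by Iritani's normalization (any residual sign being harmless since $\cZ(E[1])=-\cZ(E)$). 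Rather than compute the higher-order coefficients, I would invoke the uniqueness statements in Theorem \ref{thm:Dubrovin-Givental-decomposition} and in the cited analytic-lift theorem: a flat section of \eqref{eqn:qde-tau-off} with this leading asymptotics in a sector of width $>\pi$ must equal $y_i^\theta$. Hence $s_{\Gamma_i^\theta}=y_i^\theta$ there, and continuation to $\arg z=0$ gives $\bar y_i^\theta=\cZ([E_i^\theta])$, establishing Conjecture \ref{conj:gamma-ii} for complete smooth toric Fano $X$; the final remark (semisimplicity, hence validity of Gamma II, propagates from a neighborhood of the large-radius limit to all semisimple $\btau$, cf.\ the proof of Theorem 6.4 in \cite{GaIr15}) then completes the statement.

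The main obstacle I expect is the \emph{uniform} steepest-descent estimate: one must control $\int_{\Gamma_i^\theta}\sf_j\,e^{-W/z}\,\Omega$ uniformly over the full sector $|\arg z-\theta|<\tfrac\pi2+\delta$ --- in particular as $\arg z$ crosses the Stokes-type rays where the contributions of the subleading thimbles become marginally exponentially large --- and show that the resulting series is genuinely the Poincar\'e asymptotic expansion of a flat section. Much of the requisite machinery (convergence of the integrals at infinity via Proposition \ref{prop:La-integrable}, the Stokes structure, and the classical theory of the irregular connection \eqref{eqn:qde-tau-off}, which involves no $\btau$-derivative) is already available from \cite{Iritani09} and the classical references; so the genuinely new ingredient is the homological identification $\si([E_i^\theta]_K)=[\Gamma_i^\theta]$ furnished by Sections \ref{sec:HMS-sheaf}--\ref{sec:HMS-Lag}, after which the asymptotic matching is the remaining analytic core.
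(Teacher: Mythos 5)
Your proposal is correct and follows essentially the same route as the paper's Section \ref{sec:proof}: thimbles become an exceptional collection in $\cWL$, pass through $\mu$ and $\iota$ to a full exceptional collection $\{E_i^\theta\}$, the characteristic-cycle identification (Theorem \ref{thm:central}, Corollary \ref{cor:cycle-thimble}, Proposition \ref{prop:any-primary-insertion}) gives $\cZ([E_i^\theta])=s_{[\Gamma_i]}$, and stationary phase plus Proposition \ref{prop:identification-mir} identifies these flat sections with the asymptotic solutions $y_i^\theta$, continued to $\arg z=0$. The only cosmetic difference is that you analytically continue by varying $\arg z$ over a fixed contour while the paper rotates the contour tails at fixed $z>0$; these are equivalent via the flat structure of $R^\vee_\bZ$, and your leading-order bookkeeping (e.g.\ $\sum_j\sf_j(p_i)H^j=\sqrt{\Delta_i}\,\hat\phi_i$) matches the paper's computation.
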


We will prove this conjecture when $X$ is a complete toric Fano smooth manifold near large radius limit point ($q\in \Ueec$) using inputs from enumerative and homological mirror symmetry.

\subsection{Proof of Gamma II conjecture}
\label{sec:proof}

We fix $q_0\in \Ueec$ and $\btau_0$ such that $q_0=q(\btau_0)$. Then $QH^*_{\btau_0}(X)$ is semisimple and $W_{q_0}$ is holomorphic Morse. Let $\theta$ be an admissible phase. For each critical value $\crv_i=W_{q_0}(p_i)$ one defines
the ray $\gamma_i=\crv_i+\bR_{\geq 0} e^{\bi\theta}$, i.e. $\gamma_i$ are rays starting from $\crv_i$ towards the direction of $e^{\bi\theta}$. Let $\Gamma^0_i$ be the Lefschetz thimbles associated to each $\gamma^0_i$, such that $W_{q_0}(\Gamma^0_i)=\gamma^0_i$. Then by the stationary phase expansion
\begin{align}
  \nonumber
  \int_{\Gamma^0_i} e^{-W_{q_0}/z} \sf_j \Omega &\sim (-2\pi z)^\frac{n}{2} \frac{e^{-W_{q_0}(p_i)/z}}{\sqrt{-\det\Hess_{p_i}(W_{q_0})}}(\sf_j(p_i)\vert_{z=0}+O(z))\\
  \label{eqn:asymptotic-expansion}
  &=(-2\pi z)^\frac{n}{2}e^{-u_i/z}(\hat\phi_i,H_j)(1+O(z)).
\end{align}
Here we use the fact that $\sf_j(p_i)\vert_{z=0}=(H_j,\hat \phi_i)$, $\hat \phi_i=\sqrt{\Delta_i}\phi_i$ and $\Delta_i=-\det\Hess_{p_i}(W_{q_0})$ (c.f. Proposition \ref{prop:identification-mir}).

By Corollary \ref{cor:B-S-flat}, consider the flat section of the quantum D-module $\cF$
\[
y^\theta_i:= (-2\pi z)^{-n/2}\sum_{j=1}^\fs \left(\int_{\Gamma_i^0}f_je^{-\frac{W_{q_0}}{z}}\right) H^j.
\]
By Equation \eqref{eqn:asymptotic-expansion}, $y_i^\theta$ has an asymptotic expansion 
\[
y_i^\theta e^{u_i/z}\sim(H^1,\dots,H^\fs)\begin{pmatrix}(\hat\phi_i,H_1)\\\vdots\\(\hat\phi_i,H_\fs)\end{pmatrix}(\mathrm{id}_{\bC^\fs}+O(z)),
\]
which precisely match Equation \eqref{eqn:Dubrovin-Givental-decomposition} (c.f. Equation \eqref{eqn:Psi}). So these flat sections $y_i^\theta$ are indeed analytic lift of the asymptotic solutions given by Theorem \ref{thm:Dubrovin-Givental-decomposition}.

We rotate $\gamma^0_i$ by a family $\gamma_i^t$, $0\leq t\leq 1$ like the following figure.

\[
\tikz{
\node at (-2,2.5) {$\gamma_i^0$};
\node at (5,2.5) {$\gamma_i^1$};
\draw [dashed, ->] (-1,2.5) to (4,2.5);
\begin{scope}[scale=0.7]
\draw (0,0) to  +(-4,2);
\draw (0.5, 0.5) to +(-3, 1.5);
\draw (-2,0.5) to +(-3, 1.5);
\end{scope}
\begin{scope}[shift={(5,0)}, scale=0.5]
\draw (0,0) to  +(-2,1) to[out=150, in =180]  (3,3);
\draw (0.5, 0.5) to +(-1.5, 0.75) to[out=150, in =180]  (3,2.5);
\draw (-2,0.5) to +(-1.5,0.75) to[out=150, in =180] (3, 3.5);
\end{scope}
}
\]

Let $\Gamma_i^t$ be the thimbles over $\gamma_i^t$. Effectively $\Gamma_i^t=e^{-\bi t \theta}\Gamma_i^0$. Then the resulting
\[
\bar y^\theta_i(z):=s_{[\Gamma_i^1]}.
\]
is an analytical continuation of $y_i^\theta(z)$, well-defined on $z>0$. These $\Gamma_i^1=\Gamma_i$ as in Definition \ref{def:thimbles-cycle}. By discussion in Section \ref{sec:thimbles-obj}, there exist $L_1,\dots, L_\fs\in \cWL$ and $[L_i]=[\Gamma_i]$ for all $i$. By Proposition \ref{prop:exceptional} they form an exceptional collection. By HMS Theorem \ref{thm:ccc} and \ref{thm:GPS}, there exists a full exceptional collection $E^\theta_1,\dots,E^\theta_\fs\in \Coh(X)$, such that each $L_i\cong \mu\circ\iota(E^\theta_i)$.

Therefore by Proposition \ref{prop:any-primary-insertion}
\begin{align*}
\bar y^\theta_i&=(-2\pi z)^{-n/2}\sum_{i=1}^\fs \left(\int_{\Gamma_i}f_je^{-\frac{W_{q_0}}{z}}\right) H^j=\sum_{j=1}^\fs\llangle H_j,\frac{z^{\widetilde{-\deg}}z^{c_1(X)} A_{E^\theta_i}}{z+\psi}\rrangle_{0,2}^X\vert_{\btau=\btau_0} H^j\\
&=\cZ([E_i^\theta])\vert_{\btau=\btau_0}.
\end{align*}
Then we have reached our conclusion.
\begin{theorem}
\label{thm:gamma-ii}
  The Gamma II conjecture (Conjecture \ref{conj:gamma-ii}) is true for a complete smooth Fano toric manifold at any point $\btau=\btau_0\in \tUeec$ for sufficiently small $\epsilon$ and $\epsilon'$, i.e. near the large radius limit.
\end{theorem}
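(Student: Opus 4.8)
The plan is to run, at a fixed large-radius point, the chain of identifications assembled in Sections~\ref{sec:thimbles}--\ref{sec:HMS-Lag}: realize the analytic lifts of the Dubrovin--Givental fundamental solutions as oscillatory integrals over Lefschetz thimbles, and then transport these through homological mirror symmetry to the descendant potentials $\cZ([E_i^\theta])$. Fix $q_0\in\Ueec$ and $\btau_0$ with $q_0=q(\btau_0)$; then $QH^*_{\btau_0}(X)$ is semisimple (recalled above) with canonical basis $\{\phi_i\}$ and normalized basis $\hat\phi_i$, and $W_{q_0}$ is holomorphic Morse with critical points $p_i$ and distinct critical values $\crv_i=W_{q_0}(p_i)$. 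Fix an admissible phase $\theta$ and form the straight rays $\gamma_i^0=\crv_i+\bR_{\geq 0}e^{\bi\theta}$ together with the thimbles $\Gamma_i^0$ satisfying $W_{q_0}(\Gamma_i^0)=\gamma_i^0$.

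First I would identify the oscillatory integral over $\Gamma_i^0$ with an analytic lift of the $i$-th Dubrovin--Givental solution. By Corollary~\ref{cor:B-S-flat} the vector $y_i^\theta=(-2\pi z)^{-n/2}\sum_j\big(\int_{\Gamma_i^0}\sf_j e^{-W_{q_0}/z}\Omega\big)H^j$ is a flat section of $\cF$. Applying the stationary-phase expansion \eqref{eqn:asymptotic-expansion}, together with $\sf_j(p_i)|_{z=0}=(H_j,\hat\phi_i)$ and $\Delta_i=-\det\Hess_{p_i}(W_{q_0})$ from Proposition~\ref{prop:identification-mir}, one finds that $y_i^\theta e^{u_i/z}$ is asymptotic to the $i$-th column of $\bPsi$ times $(\mathrm{id}+O(z))$, which is exactly the normal form \eqref{eqn:Dubrovin-Givental-decomposition} (compare \eqref{eqn:Psi}); hence $(y_1^\theta,\dots,y_\fs^\theta)$ is the analytic fundamental solution of the lifting theorem stated after Theorem~\ref{thm:Dubrovin-Givental-decomposition}. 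Deforming the far ends of the $\gamma_i^0$ counterclockwise to the direction of a small positive argument $\delta$ through a family $\gamma_i^t$ (keeping the endpoints $\crv_i$ fixed) yields the right-pointing vanishing paths of Definition~\ref{def:thimbles-cycle} with thimbles $\Gamma_i:=\Gamma_i^1$, and $\bar y_i^\theta:=s_{[\Gamma_i]}$, defined for $z>0$, is by construction the analytic continuation of $y_i^\theta$ to $\arg z=0$.

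Next I would construct the exceptional collection. By Section~\ref{sec:thimbles-obj} each $\Gamma_i$ lifts to an object $L_i\in\cWL$ with $[L_i]=[\Gamma_i]$ in $\bH_{q_0}$, and by Proposition~\ref{prop:exceptional} the $L_i$ form an exceptional collection generating $\cWL$; applying $\mu$ of Theorem~\ref{thm:GPS} and $\iota$ of Theorem~\ref{thm:ccc} in reverse produces a full exceptional collection $E_1^\theta,\dots,E_\fs^\theta\in\Coh(X)$ with $L_i\cong\mu\circ\iota(E_i^\theta)$. The key identity is then Corollary~\ref{cor:cycle-thimble}: $[\Gamma_i]=[L_i]=[\Psi^{-1}\CC(\mu^{-1}(L_i))]=[\Psi^{-1}\CC(\iota(E_i^\theta))]$ in $\bH_{q_0}$, so that $\int_{\Gamma_i}\sf_j e^{-W_{q_0}/z}\Omega=\int_{\CC(\iota(E_i^\theta))}\sf_j e^{-W_{q_0}/z}\Omega$ for all $j$. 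By Proposition~\ref{prop:any-primary-insertion} the latter integrals assemble into $\cZ([E_i^\theta])|_{\btau=\btau_0}$, whence $\bar y_i^\theta=\cZ([E_i^\theta])|_{\btau=\btau_0}$, which is the assertion of Conjecture~\ref{conj:gamma-ii} at $\btau_0$. Since $q_0\in\Ueec$ (equivalently $\btau_0\in\tUeec$) and the admissible phase $\theta$ were arbitrary, this proves the theorem; one may additionally invoke the propagation of Gamma~II from a single semisimple point (proof of Theorem~6.4 of \cite{GaIr15}) to extend beyond the large-radius region.

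The main obstacle is really concentrated in Corollary~\ref{cor:cycle-thimble} (via Theorem~\ref{thm:central} and Corollary~\ref{cor:cycle-standard}): the statement that a Lefschetz thimble and the characteristic cycle of its microlocal-mirror constructible sheaf represent the same class in $\bH_q=H_n(\cY,\Re(W_q)\gg 0)$. This rests on the full chain — the GPS microlocalization equivalence $\mu$, the identification of $\Lambda^\infty$ with the core of a generic fiber of $W_q$ giving $\cWFS\cong\cWL$, the comparison $\chi_\cWL([L]_K,[L']_K)=S([L],[L'])$ of the categorical Euler pairing with the Stokes/intersection pairing, and the perfectness of $S$ used in Proposition~\ref{prop:thimble-in-standards} to upgrade a $K$-theoretic identity to an identity of relative homology classes. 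Granting those inputs, the remaining work is bookkeeping; the one genuinely analytic point that must be checked with care is that the stationary-phase prefactors together with the tail rotation are consistent with the normalization in Theorem~\ref{thm:Dubrovin-Givental-decomposition}, so that $\bar y_i^\theta$ is precisely the analytic continuation of the $i$-th fundamental solution and not some other linear combination of the $y_j^\theta$.
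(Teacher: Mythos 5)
Your proposal is correct and follows essentially the same route as the paper's own proof: realize $y_i^\theta$ as oscillatory integrals over the straight-ray thimbles via Corollary \ref{cor:B-S-flat} and the stationary-phase match with Theorem \ref{thm:Dubrovin-Givental-decomposition}, analytically continue by bending the tails to the right-pointing paths of Definition \ref{def:thimbles-cycle}, and then use Corollary \ref{cor:cycle-thimble} together with Proposition \ref{prop:any-primary-insertion} and the HMS chain $\mu\circ\iota$ to identify $\bar y_i^\theta$ with $\cZ([E_i^\theta])$ for the resulting full exceptional collection. The only nuance worth noting is the orientation of the tail rotation (the paper rotates via $e^{-\bi t\theta}$, i.e.\ as in Definition \ref{def:thimbles-cycle}), which you yourself flag as the point requiring care.
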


\begin{bibdiv}
\begin{biblist}

\bibselect{mybib}

\end{biblist}
\end{bibdiv}

\end{document}